\renewcommand\today{\number\day\space \ifcase\month\or January\or February
    \or  March\or April\or May\or June\or July\or
       August\or September\or October\or
         November\or December\fi\space  \number\year}
\newcommand\g{{\gamma}}
\newcommand\e{{\varepsilon}}
\newcommand\boa{\mathboit a}
\newcommand\bof{\mathboit f}
\newcommand\bg{\mathboit g}
\newcommand\bn{\mathboit n}
\newcommand\bu{\mathboit u}
\newcommand\bv{\mathboit v}
\newcommand\bx{\mathboit x}
\newcommand\bU{\mathboit U}
\newcommand\beq{\begin{equation}}
\newcommand{\beqn}[1]{\begin{equation}\label{#1}}
\newcommand {\bsb}[1]{\begin{subequations}\label{#1}}
\newcommand\eeq{\end{equation}}
\newcommand {\esb}{\end{subequations}}
\newcommand{\bml}{\begin{multline}}
\newcommand{\eml}{\end{multline}}
\newcommand{\bal}{\begin{align}}
\newcommand{\eal}{\end{align}}
\newcommand{\bald}{\begin{aligned}}
\newcommand{\eald}{\end{aligned}}
\newcommand{\bgth}{\begin{gather}}
\newcommand{\egth}{\end{gather}}
\newcommand{\bgd}{\begin{gathered}}
\newcommand{\egd}{\end{gathered}}
\newcommand\barA{\vbox{\ialign{##\crcr\hbox{\kern .32em\vrule height 0pt width 3.5pt depth .28pt}\crcr\noalign
{\kern1.15pt\nointerlineskip}$\hfil\displaystyle{A}\hfil$\crcr}}}
\newcommand\barB{\vbox{\ialign{##\crcr\hbox{\kern .27em\vrule height 0pt width 4.2pt depth .28pt}\crcr\noalign
{\kern1.15pt\nointerlineskip}$\hfil\displaystyle{B}\hfil$\crcr}}}
\newcommand\barC{\vbox{\ialign{##\crcr\hbox{\kern .22em\vrule height 0pt width 4.5pt depth .28pt}\crcr\noalign
{\kern1.15pt\nointerlineskip}$\hfil\displaystyle{C}\hfil$\crcr}}}
\newcommand\barD{\vbox{\ialign{##\crcr\hbox{\kern .27em\vrule height 0pt width 4.5pt depth .28pt}\crcr\noalign
{\kern1.15pt\nointerlineskip}$\hfil\displaystyle{D}\hfil$\crcr}}}
\newcommand\barE{\vbox{\ialign{##\crcr\hbox{\kern .27em\vrule height 0pt width 4.5pt depth .28pt}\crcr\noalign
{\kern1.15pt\nointerlineskip}$\hfil\displaystyle{E}\hfil$\crcr}}}
\newcommand\barF{\vbox{\ialign{##\crcr\hbox{\kern .25em\vrule height 0pt width 4.5pt depth .28pt}\crcr\noalign
{\kern1.15pt\nointerlineskip}$\hfil\displaystyle{F}\hfil$\crcr}}}
\newcommand\barG{\vbox{\ialign{##\crcr\hbox{\kern .23em\vrule height 0pt width 4.5pt depth .28pt}\crcr\noalign
{\kern1.15pt\nointerlineskip}$\hfil\displaystyle{G}\hfil$\crcr}}}
\newcommand\barH{\vbox{\ialign{##\crcr\hbox{\kern .30em\vrule height 0pt width 4.5pt depth .28pt}\crcr\noalign
{\kern1.15pt\nointerlineskip}$\hfil\displaystyle{H}\hfil$\crcr}}}
\newcommand\barI{\vbox{\ialign{##\crcr\hbox{\kern .25em\vrule height 0pt width 3.0pt depth .28pt}\crcr\noalign
{\kern1.15pt\nointerlineskip}$\hfil\displaystyle{I}\hfil$\crcr}}}
\newcommand\barJ{\vbox{\ialign{##\crcr\hbox{\kern .32em\vrule height 0pt width 3.0pt depth .28pt}\crcr\noalign
{\kern1.15pt\nointerlineskip}$\hfil\displaystyle{J}\hfil$\crcr}}}
\newcommand\barK{\vbox{\ialign{##\crcr\hbox{\kern .30em\vrule height 0pt width 5.0pt depth .28pt}\crcr\noalign
{\kern1.15pt\nointerlineskip}$\hfil\displaystyle{K}\hfil$\crcr}}}
\newcommand\barL{\vbox{\ialign{##\crcr\hbox{\kern .22em\vrule height 0pt width 3.5pt depth .28pt}\crcr\noalign
{\kern1.15pt\nointerlineskip}$\hfil\displaystyle{L}\hfil$\crcr}}}
\newcommand\barM{\vbox{\ialign{##\crcr
\hbox{\kern .32em\vrule height 0pt width 6.0pt depth .28pt}\crcr\noalign
{\kern1.15pt\nointerlineskip}$\hfil\displaystyle{M}\hfil$\crcr}}}
\newcommand\barN{\vbox{\ialign{##\crcr\hbox{\kern .30em\vrule height 0pt width 4.7pt depth .28pt}\crcr\noalign
{\kern1.15pt\nointerlineskip}$\hfil\displaystyle{N}\hfil$\crcr}}}
\newcommand\barO{\vbox{\ialign{##\crcr\hbox{\kern .25em\vrule height 0pt width 4.2pt depth .28pt}\crcr\noalign
{\kern1.15pt\nointerlineskip}$\hfil\displaystyle{O}\hfil$\crcr}}}
\newcommand\barP{\vbox{\ialign{##\crcr\hbox{\kern .29em\vrule height 0pt width 4.2pt depth .28pt}\crcr\noalign
{\kern1.15pt\nointerlineskip}$\hfil\displaystyle{P}\hfil$\crcr}}}
\newcommand\barQ{\vbox{\ialign{##\crcr\hbox{\kern .25em\vrule height 0pt width 4.2pt depth .28pt}\crcr\noalign
{\kern1.15pt\nointerlineskip}$\hfil\displaystyle{Q}\hfil$\crcr}}}
\newcommand\barR{\vbox{\ialign{##\crcr\hbox{\kern .29em\vrule height 0pt width 4.2pt depth .28pt}\crcr\noalign
{\kern1.15pt\nointerlineskip}$\hfil\displaystyle{R}\hfil$\crcr}}}
\newcommand\barS
\hfil\displaystyle{S}\hfil$\crcr}}}
\newcommand\barT
\hfil\displaystyle{T}\hfil$\crcr}}}
\newcommand\barU
\hfil\displaystyle{U}\hfil$\crcr}}}
\newcommand\barV
\hfil\displaystyle{V}\hfil$\crcr}}}
\newcommand\barW{\vbox{\ialign{##\crcr
\hbox{\kern .13em\vrule height 0pt width 8.0pt depth .28pt}\crcr\noalign
{\kern1.15pt\nointerlineskip}$\hfil\displaystyle{W}\hfil$\crcr}}}
\newcommand\barX{\vbox{\ialign{##\crcr\hbox{\kern .25em\vrule height 0pt width 5.0pt depth .28pt}\crcr\noalign
{\kern1.15pt\nointerlineskip}$\hfil\displaystyle{X}\hfil$\crcr}}}
\newcommand\barY{\vbox{\ialign{##\crcr\hbox{\kern .12em\vrule height 0pt width 5.5pt depth .28pt}\crcr\noalign
{\kern1.15pt\nointerlineskip}$\hfil\displaystyle{Y}\hfil$\crcr}}}
\newcommand\barZ{\vbox{\ialign{##\crcr\hbox{\kern .28em\vrule height 0pt width 4.2pt depth .28pt}\crcr\noalign
{\kern1.15pt\nointerlineskip}$\hfil\displaystyle{Z}\hfil$\crcr}}}
\newcommand\barGamma{\vbox{\ialign{##\crcr\hbox{\kern .25em\vrule height 0pt width 4.5pt depth .28pt}\crcr\noalign
{\kern1.15pt\nointerlineskip}$\hfil\displaystyle{\varGamma}\hfil$\crcr}}}
\newcommand\barDe{\vbox{\ialign{##\crcr\hbox{\kern .33em\vrule height 0pt width 3.8pt depth .28pt}\crcr\noalign
{\kern1.15pt\nointerlineskip}$\hfil\displaystyle{\varDelta}\hfil$\crcr}}}
\newcommand\barTh{\vbox{\ialign{##\crcr\hbox{\kern .27em\vrule height 0pt width 4.2pt depth .28pt}\crcr\noalign
{\kern1.15pt\nointerlineskip}$\hfil\displaystyle{\varTheta}\hfil$\crcr}}}
\newcommand\barLa{\vbox{\ialign{##\crcr\hbox{\kern .30em\vrule height 0pt width 3.5pt depth .28pt}\crcr\noalign
{\kern1.15pt\nointerlineskip}$\hfil\displaystyle{\varLambda}\hfil$\crcr}}}
\newcommand\barXi{\vbox{\ialign{##\crcr\hbox{\kern .25em\vrule height 0pt width 4.5pt depth .28pt}\crcr\noalign
{\kern1.15pt\nointerlineskip}$\hfil\displaystyle{\varXi}\hfil$\crcr}}}
\newcommand\barPi{\vbox{\ialign{##\crcr\hbox{\kern .30em\vrule height 0pt width 4.5pt depth .28pt}\crcr\noalign
{\kern1.15pt\nointerlineskip}$\hfil\displaystyle{\varPi}\hfil$\crcr}}}
\newcommand\barSg{\vbox{\ialign{##\crcr\hbox{\kern .30em\vrule height 0pt width 4.7pt depth .28pt}\crcr\noalign
{\kern1.15pt\nointerlineskip}$\hfil\displaystyle{\varSigma}\hfil$\crcr}}}
\newcommand\barUp{\vbox{\ialign{##\crcr\hbox{\kern .17em\vrule height 0pt width 4.5pt depth .28pt}\crcr\noalign
{\kern1.15pt\nointerlineskip}$\hfil\displaystyle{\varUpsilon}\hfil$\crcr}}}
\newcommand\barPhi{\vbox{\ialign{##\crcr\hbox{\kern .22em\vrule height 0pt width 4.0pt depth .28pt}\crcr\noalign
{\kern1.15pt\nointerlineskip}$\hfil\displaystyle{\varPhi}\hfil$\crcr}}}
\newcommand\barPsi{\vbox{\ialign{##\crcr\hbox{\kern .20em\vrule height 0pt width 4.0pt depth .28pt}\crcr\noalign
{\kern1.15pt\nointerlineskip}$\hfil\displaystyle{\varPsi}\hfil$\crcr}}}
\newcommand\barOm{\vbox{\ialign{##\crcr\hbox{\kern .30em\vrule height 0pt width 4.2pt depth .28pt}\crcr\noalign
{\kern1.15pt\nointerlineskip}$\hfil\displaystyle{\varOmega}\hfil$\crcr}}}
\numberwithin{equation}{section}
\DeclareMathAlphabet{\bss}{OT1}{cmss}{bx}{n}
\DeclareMathAlphabet{\mathboit}{OT1}{cmr}{bx}{it}
\newtheorem{Theorem}{Theorem}[section]
\newtheorem{Proposition}[Theorem]{Proposition}
\newtheorem{Lemma}[Theorem]{Lemma}
\theoremstyle{definition}
\newtheorem{remark}[Theorem]{Remark}
\newcommand{\Id}{\mathrm{Id}}
\newcommand{\R}{\mathbb{R}}
\newcommand{\N}{\mathbb{N}}
\newcommand{\calH}{\mathcal{H}}
\newcommand\ba{\boldsymbol a}
\begin{document}
\author{David P.~Bourne$^1$, Sergio Conti$^2$ and Stefan M\"uller$^{2,3}$}
\date{22 December 2015}

\title{Energy Bounds for a Compressed Elastic Film on a Substrate}

\maketitle

\footnotetext[1]{Department of Mathematical Sciences, Durham University}
\footnotetext[2]{Institute for Applied Mathematics, University of Bonn}
\footnotetext[3]{Hausdorff Center for Mathematics, University of Bonn}

%------------------------------------------------------------------------------------------------------------------
\begin{abstract}
We study pattern formation in a compressed elastic film which delaminates from a substrate. Our key tool is the
determination of rigorous  upper and lower bounds on the minimum value of a suitable energy functional.
The energy consists of two parts,  describing the
two main physical effects. The first part represents the elastic energy of the film, which is approximated using
 the von K\'arm\'an plate theory. The second part represents the fracture or delamination energy, which is approximated
 using the Griffith model of fracture.
 A simpler model containing the first term alone was previously studied
 with similar methods by
several authors,
assuming that the delaminated region is fixed. We include the fracture term,
transforming the elastic minimization into a free-boundary problem, and opening the way for patterns which
result from the interplay of elasticity and delamination.

After rescaling, the energy depends on only two parameters:
the rescaled film thickness, $\sigma$, and a measure of the bonding strength between the film and substrate, $\gamma$. We prove upper bounds on the minimum energy of the form
$\sigma^a \gamma^b$ and find that there are four different parameter regimes corresponding to different values of $a$ and $b$ and to different folding patterns of the film. In some cases the upper bounds are attained by self-similar folding patterns as observed in experiments.
Moreover, for two of the four parameter regimes we prove matching, optimal lower bounds.
\end{abstract}

%
%-----------------------------------------------------------------------------------------------------------------
%
\section{Introduction}
Compressed elastic sheets such as plastic films and fabric often exhibit self-similar folding patterns.  A typical example are folds in curtains, which decrease in number and increase in size from top to bottom as the folds merge. This coarsening phenomenon is also observed at the microscale in graphene and semiconductor films. The spontaneous delamination of prestrained semiconductor films from their substrates produces blisters with rich, self-similar folding patterns \cite{GioiaOrtiz98}. This also represents a challenge to manufacturers. Recently experimentalists have discovered how this, originally unwanted phenomenon, can be harnessed for thin film patterning and nanofabrication \cite{Ionov12}, for example to create nanotubes and nanochannels from prestrained semiconductor films \cite{CendulaKaravittaya2009}, \cite{MeiThurmer2007}.

In the mathematical community there is an ongoing programme to understand why these patterns occur. The calculus of variations has proved to be a useful tool, where the patterns are viewed as minimisers of an elastic potential energy. This is  the approach we take in this paper.

Motivated by the experiments of \cite{MeiThurmer2007}, we
study a variational model of
a two-layer material consisting of a rectangular elastic film on a substrate.
The film is clamped along one edge to the substrate and is free on the other three sides. Due to a lattice mismatch between the film and the substrate, the film suffers isotropic in-plane compression. It can relax this compression by delaminating from the substrate and buckling, but a short-range attractive force between the film and the substrate opposes this. We assign to the material the following energy:
\begin{equation}
\label{I.1}
I^{(\sigma,\g)} := I_{\textrm{vK}} + I_{\textrm{Bo}},
\end{equation}
where $I_{\textrm{vK}}$ is the von K\'arm\'an energy of isotropically compressed plates:
\begin{equation}
\label{I.2}
%\begin{aligned}
%I^{(\sigma,\g)}[\bu,w,l_1,l_2]
I_{\textrm{vK}}[\bu,w,l_1,l_2] := \int_{0}^{l_1} \! \int_{0}^{l_2} |D \bu + (D \bu)^\mathrm{T} + D w \otimes D w - \Id |^2 \, dx \, dy
+ (\sigma l_1)^2 \int_{0}^{l_1} \! \int_{0}^{l_2} | D^2 w|^2 \, dx \, dy
%+ \g | \{ (x,y)\in \Omega \, : \, w(x,y)>0 \} |
%\int_{x=0}^{l_1} \! \int_{y=0}^{l_2} \chi_{\{w>0\}} \, dx \, dy, \\
%\end{aligned}
\end{equation}
and $I_{\textrm{Bo}}$ is a bonding energy that penalises delamination of the film from the substrate:
\begin{equation}
I_{\textrm{Bo}}[\bu,w,l_1,l_2] := \g \, | \{ (x,y)\in (0,l_1) \times (0,l_2) \, : \, w(x,y)>0 \} |.
\end{equation}
Here $(0,l_1) \times (0,l_2)$ is the set of material points of the film, $l_1\le l_2$, and $\bu(x,y)=(u(x,y),v(x,y))$ and $w(x,y)$ are the in-plane and vertical displacements of the film from an isotropically compressed state.
The substrate is taken to be at height $z=0$ so that film is bonded to the substrate at points where $w=0$.
The energy is rescaled so that $(u,v,w)=(x/2,y/2,0)$ corresponds to the stress-free, minimum energy state of the film, and $(u,v,w)=(0,0,0)$ has positive energy and corresponds to the isotropically compressed state. The energy has two parameters: $0<\sigma<1$ is the rescaled thickness of the film and $\g \ge 0$ is a measure of attractive force between the film and the substrate. We show how these are related to physical parameters such as the film thickness and Young's modulus in Appendix \ref{M}, where the energy \eqref{I.1} is derived.
The experimental setup is shown in Figure \ref{figbcm1a} and discussed below.
\begin{figure}[h]
\begin{center}
%\centering
\includegraphics[width=0.6\textwidth]{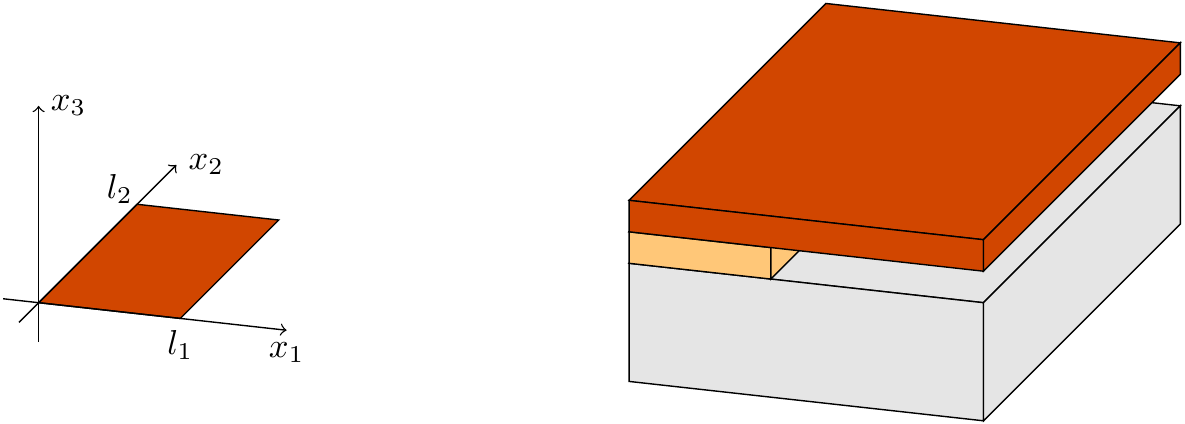}
%\hfill
\caption{Geometry of the partially delaminated film. The bottom layer is the substrate, the middle layer is the sacrificial buffer layer, which is removed by chemical etching in the region $x_1>0$, and the top layer is the thin elastic film.
The film is subject to compression at the boundary
$x_1=0$, where it is still attached to the buffer layer. It may rebond to the substrate in the region $x_1>0$. For simplicity, in our model we take the buffer layer to have zero thickness. Consequently we refer to the material as a two-layer material rather than a three-layer material. The general case is discussed briefly in Appendix \ref{H}.}
\label{figbcm1a}
\end{center}
\end{figure}

We assume that the film is clamped to the substrate along the edge $\{ 0 \} \times (0,l_2)$:
\beqn{I.4}
u(0,y)=0, \quad v(0,y)=0, \quad w(0,y) = 0, \quad D w (0,y) = \mathbf{0}
\eeq
and is free on the other three sides of the rectangle $(0,l_1)\times(0,l_2)$. Since the film cannot go below the substrate we also have the positivity constraint
\begin{equation}
w \ge 0.
\end{equation}

The von K\'arm\'an energy \eqref{I.2} is the sum of a stretching energy $I_{\textrm{S}}$, which penalises compression and extension, and a bending energy
$I_{\textrm{Be}}$:
\begin{equation}
\begin{aligned}
I_{\textrm{S}} & := \int_{0}^{l_1} \! \int_{0}^{l_2} |D \bu + (D \bu)^\mathrm{T} + D w \otimes D w - \Id |^2 \, dx \, dy, \\
I_{\textrm{Be}} & := (\sigma l_1)^2 \int_{0}^{l_1} \! \int_{0}^{l_2} | D^2 w|^2 \, dx \, dy.
\end{aligned}
\end{equation}
The folding patterns in the experiments of \cite{MeiThurmer2007}, and in compressed thin films in general, can be explained as the competition between the stretching and bending energies. The stretching energy has no minimum over the set of displacements satisfying the clamped boundary condition. Its infimum is zero, and an infimising sequence can be constructed by taking a periodic folding pattern, with folds perpendicular to the clamped boundary, and by sending the wavelength of the folds to zero. This relaxes the compression in the film but sends the bending energy to infinity. The competition between the stretching and bending energies determines the scale of the pattern.

We shall see that for some range of the parameters $(\sigma,\g)$ it is energetically favourable for the film to form a self-similar folding pattern with the wavelength of the folds increasing away from the clamped boundary.
%as the boundary condition is felt less.
Close to the boundary a fine folding pattern is needed in order to interpolate the folds to the clamped boundary condition without paying too much stretching energy. It would cost too much bending energy, however, to use such small folds in the whole domain and so branching occurs to obtain larger folds in the bulk of the domain.
The addition of the bonding energy, which is the novelty of this paper, complicates the situation further and gives a richer family of folding patterns.

%In an abuse of notation we will often use $I_{\textrm{S}}$, $I_{\textrm{Be}}$, $I_{\textrm{Bo}}$ to denote the stretching, bending and bonding energies on %specified subdomains of $(0,l_1)\times(0,l_2)$.

\paragraph{Experimental Motivation.}
In the experiments of \cite{MeiThurmer2007} they use a three-layer material, where the bottom layer is a substrate (e.g., Si), the middle layer is a buffer layer (e.g., SiO$_2$), and the top layer is a thin semiconductor film (e.g., Si$_x$Ge$_{1-x}$). Due to a lattice mismatch between the film and the buffer layer, the thin film suffers isotropic, in-plane compression. See Figure \ref{figbcm1a}.

In the experiments, a slab of the buffer layer is removed by chemical etching. (An acid is used to eat away the buffer layer from the side, without damaging the film or substrate. Once the desired portion of the buffer layer has been removed the acid is washed out and the sample is dried.) This allows the thin film to partially relax the in-plane compression by folding. Since the buffer layer is thin, it is observed that the folds of the film come into contact with the substrate and bond to it via attractive interfacial forces. In this way submicro and nano scale channels are fabricated, which can be used, e.g., in nanofluidic devices. The patterns formed by the channels are self-similar, consisting of regions where the film is bonded to the substrate between folds that branch as they approach the boundary of the etched region.
%, where the film is still attached to the buffer layer.

The model above is a simplified model where we take the buffer layer to have zero thickness and treat the material as a two-layer material, although many of our results extend to the case of thin buffer layers as shown in Appendix \ref{H}.
Also, while our variational model is a major simplification of the dynamic etching, rinsing and drying processes used in experiments, we still obtain good qualitative agreement with experiments.
More sophisticated models of delamination appear for example in \cite{Annabattula} and \cite{BhattacharyaFonsecaFrancfort2002}.

\paragraph{Main results.}
The type of self-similar folding patterns seen in experiments are difficult to predict.
Typically the in-plane compression in the film is far past the critical buckling threshold and so standard buckling analysis (linear instability analysis)
 cannot predict the branching patterns. Minimising the energy $I^{(\sigma,\g)}$ numerically is also challenging since there are many local minimisers. Instead we construct approximate global minimisers of $I^{(\sigma,\g)}$ by hand. In the proof of Theorem \ref{UB1} we construct admissible displacement fields $(\bu^*,w^*)$ satisfying upper bounds of the form
\begin{equation}
\nonumber
I^{(\sigma,\g)}[\bu^*,w^*,l_1,l_2] \le \overline{c} \, l_1 l_2 \, \sigma^a \gamma^b
\end{equation}
for some $\overline{c}, a, b >0$. The powers $a$ and $b$ depend on the region in the phase diagram; in Theorem \ref{UB1} we identify four parameter regimes $A$--$D$ within the parameter space
$\{ (\sigma, \g) \in (0,1) \times [0,\infty) \}$ corresponding to different values of $a$ and $b$ and different folding patterns $(\bu^*,w^*)$ of the thin film.
These parameter regimes are shown in Fig.~\ref{Fig1} and the values of $a$ and $b$ are given in Theorem \ref{UB1}.

We will see that
the upper bound for regime $A$ is attained when the film is bonded to the substrate everywhere, for regime $B$ by a simple periodic folding pattern, and for regimes $C$ and $D$ by fold branching patterns.  What distinguishes regimes $C$ and $D$ is that in regime $C$ the film is bonded to the substrate in large parts of the domain. Also regime $C$ is bulk dominated in the sense that the order of the energy is determined by the deformation of the film in the interior of the domain, whereas regime $D$ is boundary dominated. Consequently we refer to regime $A$ as the \emph{flat regime}, regime $B$ as the \emph{laminate regime} (borrowing a term from the theory of phase transitions in metals), regime $C$ as the \emph{localised branching regime}, and regime $D$ as the \emph{uniform branching regime}.
Some of these patterns are shown in Figures \ref{RegimeB}--\ref{OneFold}.
%\ref{RegimeB}--\ref{RegimeE}.

For parameter regimes $A$ and $D$ we prove matching lower bounds:
\begin{equation}
\nonumber
\min_{(\bu,w)} I^{(\sigma,\g)}[\bu,w,l_1,l_2] \ge \underline{c} \, l_1 l_2 \, \sigma^a \gamma^b
\end{equation}
with the same $a$ and $b$ as in the upper bounds, but with a smaller multiplicative constant $0< \underline{c} < \overline{c}$. See Theorem \ref{LB1}. Therefore we obtain power laws of the form
\begin{equation}
\nonumber
\min_{(\bu,w)} I^{(\sigma,\g)}[\bu,w,l_1,l_2] \sim l_1 l_2 \, \sigma^a \gamma^b.
\end{equation}
See Remark \ref{opt}. This means that our approximate global minimisers $(\bu^*,w^*)$, while not being minimisers, do have the optimal energy scaling. Moreover we see that some of our optimal constructions $(\bu^*,w^*)$ exhibit self-similar folding patterns. In this sense we predict the patterns seen in experiments.

For regimes $B$ and $C$ it remains an open problem to prove matching upper and lower bounds.

\paragraph{Applications of the results.}
%Our results can also be compared quantitatively with experiments, not only qualitatively.
The upper bound constructions give scaling laws for the geometry of the patterns. For example, in parameter regime $C$ the upper bound construction corresponds to a self-similar folding pattern with periodicity cells of the form shown in Figure \ref{OneFold}, where the period $2h$ decreases towards the clamped boundary. The period $2h_0$, the fold width $2 \delta_0$ and the fold height $A_0$ of the coarsest pattern (the folds furthest from the clamped boundary) satisfy the scaling laws
\begin{equation}
h_0 \sim l_1 \sigma^{1/4} \g^{1/16}, \quad \delta_0 \sim l_1 \sigma^{3/4} \g^{-5/16}, \quad A_0 \sim l_1 \sigma^{1/2} \gamma^{-1/8}.
\end{equation}
These come from the proof of Proposition \ref{Prop:overall}. These scaling laws can be written in terms of the film thickness, Young's modulus, etc., using equation \eqref{M.7}. Moreover, by comparing these scaling laws with experiments, we could extract a value for the bonding strength $\g$, which is difficult to measure experimentally. The upper bound constructions could also be used as initial guesses for numerical simulations.

\paragraph{Related work.}
The variational study of pattern formation in compressed thin films was initiated by  Ortiz  and Gioia in the 90s \cite{OrtizGioia,GioiaOrtiz,GioiaOrtiz98},
and has meanwhile attracted significant attention in the mathematical literature.
Our results generalise those of \cite{BenBelgacem1} and \cite{JinSternberg}, who proved for the case $\g=0$ the optimal energy scaling $\min I^{(\sigma,0)} \sim \sigma$.
In \cite{BenBelgacem2} it is shown that the minimum energy scales the same way if the thin film is modelled using three-dimensional nonlinear elasticity, which justifies our choice of the von K\'arm\'an approximation (see also \cite{ContiDesimoneMueller2005}).
The scaling is different, however, if the in-plane displacements of the film $(u,v)$ are neglected, see \cite{GioiaOrtiz,OrtizGioia, JinKohn00}. The rigorous energy scaling approach used in this paper
was first used for the study of pattern formation in shape-memory alloys \cite{KohnMuller92,KohnMuller94}
and has proven successful in the study of a variety of other pattern formation problems, including for example
confinement of elastic sheets and crumpling patterns \cite{ContiMaggi}, and the structure of flux tubes in type-I superconductors \cite{ChoksiKohnOtto04,ChoksiContiKohnOtto2008}.
The limit in which the volume fraction of one phase is very small may lead to the occurrence of
a variety of phases with partial branching, as was demonstrated for superconductors in \cite{ChoksiKohnOtto04,ChoksiContiKohnOtto2008} % ,ContiOttoSerfaty
and for shape-memory alloys and dislocation structures in \cite{B9_Zwicknagl,ContiZwicknagl}.
A finer mathematical analysis was possible in the case of annular thin films \cite{BellaKohn2014}.
The situation with compliant substrates leads to different patterns, which are homogeneous over the film,
see for example \cite{KohnNguyen13, BedrossianKohn2015} and references therein.
There is a large literature on folding patterns in compressed thin films and other approaches include linear instability analysis, post-buckling analysis and numerical methods, e.g.,
\cite{Audoly99,AudolyBoudaoud2003,AudolyRomanPocheau,CendulaKaravittaya2009,HuangIm,Pomeau98}.
These type of techniques were used by \cite{Annabattula} to study the same experiments \cite{MeiThurmer2007} as we do. Our results complement theirs; they use a different model and techniques to
obtain different types of results, namely quantitative predictions about the folding patterns away from the boundary, rather than focussing on self-similar branching as we do here.
In the physics literature self-similar folding patterns are referred to as wrinklons \cite{Vandeparre2001}.
A more detailed discussion of the literature  and an overview of the present results are given in the companion paper \cite{BourneContiMueller1}.

\paragraph{Outline of the paper.} In Section \ref{Main} we state our main results, the upper and lower bounds on the minimum value of the energy $I^{(\sigma,\g)}$.
The lower bounds are proved in Section \ref{LB} and the upper bounds are proved in Sections \ref{A}--\ref{E}.
The model is derived in Appendix \ref{M}.
In Appendix \ref{H} we show how our results can be extended to include the more general boundary conditions that correspond to the the experiments of \cite{MeiThurmer2007}. In Appendix \ref{App:P} we prove a version of the Poincar\'e inequality that is needed for the lower bounds.
%\paragraph{Organisation of the paper.}
%
%----------------------------------------------------------------------------------------------------------------------
%
\section{Main Results}
\label{Main}
In this section we state our main results. The proofs will be postponed to the following sections.
Let $\Omega := (0,l_1)\times(0,l_2)$ be the set of material points of the elastic film, $0<l_1\le l_2$, and let
\begin{equation}
\label{V}
V := \left\{
(\bu,w) \in H^1(\Omega;\R^ 2) \times H^2(\Omega) : w=0, \, \bu = D w = \mathbf{0} \textrm{ on } \{ 0 \} \times (0,l_2), \; w \ge 0
\right\}
\end{equation}
be the set of admissible displacements satisfying the clamped boundary condition and the positivity constraint.
%\subsection{Upper Bounds}
%\label{UB}

%First we give upper bounds on the minimum value of the energy $I^{(\sigma,\g)}$.
\begin{Theorem}[Upper bounds]
\label{UB1}
Let $0 < l_1 \le l_2$. %Let the buffer layer thickness $h_{\textrm{b}} = 0$. %Let $(\sigma,\g) \in (0,1) \times [0,\infty)$.
 Define the parameter space
$X = \{(\sigma,\g) \in (0,1) \times [0,\infty) \}$. Define parameter regimes
\begin{equation}
\nonumber
\begin{aligned}
A := & \{(\sigma,\g) \in X: \g \ge \sigma^{-1} \}, \\
B := & \{(\sigma,\g) \in X: \sigma^{-4/9} \le \g < \sigma^{-1} \}, \\
C := & \{(\sigma,\g) \in X: \sigma^{4/5} \le \g < \sigma^{-4/9} \}, \\
D := & \{(\sigma,\g) \in X: \g < \sigma^{4/5} \},
\end{aligned}
\end{equation}
see Figure \ref{Fig1}.
There exists a positive constant c, independent of $l_1$, $l_2$, $\sigma$, and $\g$, such that
\begin{equation}
\nonumber
\min_V I^{(\sigma,\g)} \le c \, l_1 l_2
\left\{
\begin{array}{ll}
1 & \textrm{if } (\sigma,\g) \in A, \\%\g \ge \sigma^{-1}, \\
(\sigma \g)^{2/5} & \textrm{if } (\sigma,\g) \in B, \\ %\textrm{if } \sigma^{-4/9} \le \g < \sigma^{-1}, \\
\sigma^{1/2} \g^{5/8} & \textrm{if } (\sigma,\g) \in C, \\ %\textrm{if } \sigma^{4/5} \le \g < \sigma^{-4/9}, \\
\sigma & \textrm{if } (\sigma,\g) \in D.%\textrm{if } \g < \sigma^{4/5}.
\end{array}
\right.
%f(\sigma,\gamma),
\end{equation}
%where $I^{(\sigma,\g)}$ is defined in equation \eqref{I.1} and
%\begin{equation}
%\nonumber
%f(\sigma,\gamma) =
%\left\{
%\begin{array}{ll}
%1 & \textrm{if } (\sigma,\g) \in A, \\%\g \ge \sigma^{-1}, \\
%(\sigma \g)^{2/5} & \textrm{if } (\sigma,\g) \in B, \\ %\textrm{if } \sigma^{-4/9} \le \g < \sigma^{-1}, \\
%\sigma^{1/2} \g^{5/8} & \textrm{if } (\sigma,\g) \in C, \\ %\textrm{if } \sigma^{4/5} \le \g < \sigma^{-4/9}, \\
%\sigma & \textrm{if } (\sigma,\g) \in D.%\textrm{if } \g < \sigma^{4/5}.
%\end{array}
%\right.
%\end{equation}
\end{Theorem}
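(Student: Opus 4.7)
The proof constructs, for each of the four parameter regimes, an explicit admissible pair $(\bu^\ast,w^\ast)\in V$ whose total energy achieves the stated bound.

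For regime $A$ the choice is trivial: take $\bu^\ast=\0$ and $w^\ast=0$. The clamped conditions and the positivity constraint are then satisfied automatically, the bending and bonding terms vanish, and the stretching term reduces to $\int_\Omega|\Id|^2\,dx\,dy = 2\,l_1 l_2$, giving the universal bound $c\,l_1 l_2$.

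For regime $B$ I would use a one-dimensional periodic fold pattern whose fold axes are perpendicular to the clamped edge, of the form $w^\ast(x,y)=\eta(x)\,A f(y/\lambda)$, with $f$ a non-negative $C^2$ bump supported on intervals of relative length $\delta/\lambda$ within each period, and $\eta$ a boundary-layer cut-off satisfying $\eta(0)=\eta'(0)=0$. The in-plane displacement $\bu^\ast$ is chosen cell by cell so as to cancel the leading-order quadratic term $Dw^\ast\otimes Dw^\ast$ in the bulk, with residual stretching confined to the boundary layer and the transition zones $\{f'\neq 0\}$. Optimizing the three resulting contributions in the parameters $A$, $\delta$, $\lambda$ and the boundary-layer width then produces the scaling $(\sigma\gamma)^{2/5}l_1 l_2$.

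For regimes $C$ and $D$ the constructions are self-similar branching fold patterns; the basic periodicity cell (Figure \ref{OneFold}) has period $2h$, fold width $2\delta$, and amplitude $A$, and the period is halved at successive branching levels as one approaches the clamped edge. On each cell $w^\ast$ is prescribed as a smoothed bump and $\bu^\ast$ is constructed globally, compatible across merges and vanishing at the clamped edge, so as to cancel the leading-order contribution from $Dw^\ast\otimes Dw^\ast$. Summing the three energies over all branching levels and optimizing in the coarsest scales yields the scaling $\sigma^{1/2}\gamma^{5/8}l_1 l_2$ in regime $C$ (with the characteristic scales $h_0,\delta_0,A_0$ being those displayed just before the statement of the theorem) and $\sigma\,l_1 l_2$ in regime $D$, where the construction reduces essentially to the Jin-Sternberg/Ben Belgacem branching used for $\gamma=0$ and the bonding contribution is controlled by choosing the fold widths so that the delaminated area stays $\lesssim \sigma l_1 l_2/\gamma$.

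\textbf{Main obstacle.} The central difficulty in all three non-trivial regimes is the construction of $\bu^\ast$: it must vanish at the clamped edge, remain globally continuous across cell interfaces and branching-level merges, and cancel the leading-order term $Dw^\ast\otimes Dw^\ast$ so that the residual stretching concentrates on small sets whose cumulative contribution fits into the target scaling. A related delicate point is the boundary-layer matching of $w^\ast$ to the clamped conditions $w=Dw=\0$ without deteriorating the bending scaling. Once $(\bu^\ast,w^\ast)$ is in hand, the verification of each bound reduces to an elementary integration of the three energy densities on a single periodicity cell or branching level, followed by summation.
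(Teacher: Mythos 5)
Your overall strategy coincides with the paper's: a flat deformation for regime $A$, a one-dimensional laminate with a boundary-layer cut-off for regime $B$, a self-similar branching construction for regimes $C$ and $D$, with $\bu^\ast$ chosen in each case to cancel $Dw^\ast\otimes Dw^\ast$ in the bulk. Regime $A$ is correct and identical to Lemma~\ref{Lemma:Flat}, and your regime $B$ sketch matches the structure of Proposition~\ref{Prop:B} (though you do not mention the paper's specific device of taking $\tilde v=\psi^2 v$ rather than $\psi v$ in the boundary layer, which is what keeps the $(2\tilde v_y+\tilde w_y^2-1)$ term of order $1$).

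However, for regimes $C$ and $D$ your description of the branching step hides a real gap. You say ``the period is halved at successive branching levels'' and invoke the Ben Belgacem / Jin--Sternberg construction. That construction halves both the period $2h$ and the fold width $2\delta$ simultaneously, which keeps $\delta/h$ constant. It is exactly what is optimal when $\gamma=0$, but it cannot produce the $\sigma^{1/2}\gamma^{5/8}$ scaling of regime $C$. The point of the paper is that the optimal fold width obeys $\delta\sim(\sigma l_1)^{2/3}\gamma^{-1/3}h^{1/3}$, so under a period halving $h\mapsto h/2$ the fold width must shrink only by $2^{-1/3}$, not by $2^{-1}$: the fractional delaminated area $\delta/h$ must \emph{grow} as one approaches the clamped edge. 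Because a single ``split one fold into two'' operation preserves the fold width while halving the period, it overshoots; the paper therefore decomposes each refinement step into two sub-steps, a \emph{fold shrinkage} (Lemma~\ref{Lemm:FS}, reducing the width from $2\delta_i$ to $2\delta_{i+1}$) followed by a \emph{fold splitting} (Lemma~\ref{LemmaFSS}, turning one fold of width $2\delta_{i+1}$ into two). Both sub-steps are instances of a general cost-of-branching estimate (Lemma~\ref{LemmaUB}). Moreover, for $\gamma<1$ the refinement eventually reaches $\delta_i=h_i$, and only at that point does the construction switch to the pure period-halving regime you describe; the crossover index $I$ is tracked explicitly in the proof of Proposition~\ref{Prop:overall}. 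Your proposal, read literally, would either use the wrong fold-width law throughout (and miss the $C$ scaling) or leave unexplained how to interpolate fold widths continuously across levels; this is not merely missing detail but the central new idea of the construction, and it should be named.

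Finally note that the paper's lower-bound side (Theorem~\ref{LB1}) is irrelevant to the statement you are proving, so you were right to focus entirely on upper-bound constructions; and your identification of the main difficulty (constructing $\bu^\ast$ compatibly across cells while vanishing at the clamped edge) is the correct one, addressed in the paper by the explicit integral formulas for $u,v$ in~\eqref{uv} together with the oddness/evenness arguments and assumption (iii) in Lemma~\ref{LemmaUB}.
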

\begin{proof}
The proof is done in  Sections \ref{A}--\ref{E}, separately for each regime.
Precisely, the bound in Regime A follows from Lemma \ref{Lemma:Flat},
the one in regime B from Proposition \ref{Prop:B}, regime C and D from Proposition \ref{Prop:overall}.
\end{proof}

\begin{figure}[h]
\centerline{\includegraphics[height=0.4\textheight]{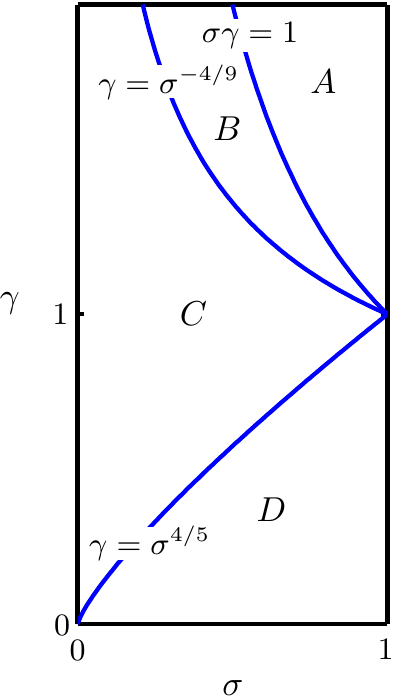}}
\caption{The four parameter regimes given in Theorem \ref{UB1}. $A$ is the flat regime, $B$ is the laminate regime, $C$ is the localised branching regime, and $D$ is the uniform branching regime.}
\label{Fig1}
%\end{minipage}
%\begin{minipage}[t]{0.5\textwidth}
%\centering
%{\includegraphics[height=0.4\textheight]{Upperbounds2.eps}}
%\caption{The five parameter regimes given in Theorem \ref{UB2}.}\label{Fig2}
%\end{minipage}
\end{figure}
\noindent
Define an additional parameter regime
\begin{equation*}
D' :=  \{(\sigma,\g) \in X: \g < \sigma^{1/2} \} \supset D.
\end{equation*}

\begin{Theorem}[Lower Bounds]
\label{LB1}
 Let $\gamma \ge0$, $\sigma\in(0,1)$, $0<l_1\le l_2$.
 There exists a positive constant c, independent of $l_1$, $l_2$, $\sigma$, and $\g$, such that
 for all $(\bu,w)\in V$
 \begin{equation}
  I^{(\sigma,\gamma)}[\bu,w]\ge c \, l_1 l_2
\left\{\!\!\!
  \begin{tabular}{lll}
    $1$ &  if  $\gamma \ge \sigma^{-1}$ & \text{(regime $A$),}\\
    $(\sigma\gamma)^{2/3}$ &  if  $\sigma^{1/2}\le \gamma\le \sigma^{-1}$, %& \text{(regime B'),}
    \\
    $\sigma$ &  if $\gamma<\sigma^{1/2}$.
     & \text{(regime $D'$).}
  \end{tabular}\right.
 \end{equation}
\end{Theorem}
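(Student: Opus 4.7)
The approach proceeds regime by regime. For regime $D'$ ($\gamma < \sigma^{1/2}$) one simply drops the nonnegative bonding term, $I^{(\sigma,\gamma)} \ge I_{\textrm{vK}}$, and invokes the Ben-Belgacem--Jin-Sternberg lower bound $I_{\textrm{vK}} \ge c\, l_1 l_2\, \sigma$ originally obtained for the unconstrained $\gamma = 0$ problem; the positivity constraint $w \ge 0$ only shrinks the admissible set, so the same bound persists. For regime $A$ ($\gamma \ge \sigma^{-1}$) I would dichotomise on $|\{w > 0\}|$: if $|\{w > 0\}| \ge l_1 l_2/2$ the bonding term alone gives $I_{\textrm{Bo}} \ge \tfrac{1}{2}\gamma\, l_1 l_2 \ge \tfrac{1}{2} l_1 l_2$; otherwise $|\{w = 0\}| \ge l_1 l_2/2$, and on that set $Dw = 0$ a.e.\ (since $w \ge 0$ attains its minimum), so the stretching integrand reduces to $|D\bu + (D\bu)^{\mathrm{T}} - \Id|^2$. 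A slice-wise Poincar\'e-type argument exploiting the clamped condition $\bu = \mathbf{0}$ on $\{0\} \times (0, l_2)$---the relevant version is established in Appendix~\ref{App:P}---then yields $\int_{\{w = 0\}} |D\bu + (D\bu)^{\mathrm{T}} - \Id|^2 \ge c\, l_1 l_2$.

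The middle regime $\sigma^{1/2} \le \gamma \le \sigma^{-1}$ is the crux. For a.e.\ $y \in (0, l_2)$, the $(1,1)$-component of $\bE := D\bu + (D\bu)^{\mathrm{T}} + Dw \otimes Dw - \Id$ integrated on $(0, l_1) \times \{y\}$, together with $u(0, y) = 0$, yields the trace identity
\begin{equation}
\nonumber
A(y) := \int_0^{l_1} (\partial_x w)^2\, dx = l_1 - 2 u(l_1, y) + \int_0^{l_1} \bE_{11}\, dx,
\end{equation}
expressing quantitatively that the film must buckle ($A(y)$ of order $l_1$) unless it pays stretching or elongates at the free edge $x = l_1$. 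Since $\partial_x w(0, y) = 0$, the 1D Gagliardo--Nirenberg interpolation gives $\|\partial_x w(\cdot, y)\|_{L^\infty_x}^2 \le 2 \sqrt{A(y) B(y)}$ where $B(y) := \int_0^{l_1} (\partial_{xx} w)^2\, dx$, and since $\partial_x w = 0$ a.e.\ on $\{w = 0\}$,
\begin{equation}
\nonumber
A(y) \le T(y)\, \|\partial_x w(\cdot, y)\|_{L^\infty_x}^2 \le 4\, T(y)^2 B(y),
\end{equation}
with $T(y) := |\{x : w(x, y) > 0\}|$.

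On the ``good'' set of slices where $|u(l_1, y)| \le l_1/4$ and $\int_0^{l_1} \bE_{11}^2\, dx \le l_1/16$, the trace identity forces $A(y) \ge l_1/4$, so the geometric constraint becomes $T(y)^2 B(y) \ge l_1/16$. Minimising the slice contribution $\gamma T(y) + \sigma^2 l_1^2 B(y)$ to $I_{\textrm{Bo}} + I_{\textrm{Be}}$ subject to this constraint (an AM--GM argument with optimum at $T(y) \sim l_1 (\sigma^2/\gamma)^{1/3}$) delivers $\ge c\, l_1 (\sigma \gamma)^{2/3}$ per slice; integrating over the good slices produces $c\, l_1 l_2 (\sigma\gamma)^{2/3}$. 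On the complementary ``bad'' slices either the slice stretching energy or a term controlled by $u(l_1, y)^2$ already exceeds this scale, so in either case the required lower bound holds. The principal obstacle is precisely the pointwise-uncontrollable boundary term $u(l_1, y)$ in the trace identity: it cannot be estimated slice by slice from the energy and forces the good/bad dichotomy, together with an averaged $L^2$ bound on $u(l_1, \cdot)$ in terms of the total energy to quantify the ``bad'' set. Orchestrating this case analysis alongside the interpolation optimisation in $T$ and $B$ constitutes the main technical work.
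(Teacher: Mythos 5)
Your regime-$D'$ step is exactly the paper's (Lemma~\ref{Lem:LB1}): drop the nonnegative bonding term and cite the $\gamma=0$ lower bound of \cite{BenBelgacem1}. The dichotomy on $|\{w>0\}|$ for regime~$A$, and the good/bad slice idea for the middle regime, are reasonable starting points. However, there is a genuine gap in the rigidity step that underlies both the $A$ bound and the $(\sigma\gamma)^{2/3}$ bound, and it is not a technicality you could ``orchestrate'' away.

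Your argument extracts information only from the $(1,1)$-component $\bE_{11}=2u_x+w_x^2-1$ together with the condition $u(0,y)=0$. The trace identity then shows that, on a slice, either $\int w_x^2\,dx$ is large (buckling) or $u(l_1,y)$ is of order $l_1$. You acknowledge the second alternative and claim it can be controlled by ``an averaged $L^2$ bound on $u(l_1,\cdot)$ in terms of the total energy''. No such bound follows from what you use: the map $\bu=(x/2,y/2)$, $w=0$ has $\bE_{11}\equiv 0$, $u(l_1,y)=l_1/2$ for every $y$, and identically vanishing $E_{11}$-slice energy, so the proposed estimate would have to be false. (This map violates only the $v(0,y)=0$ part of the boundary condition, which your slice argument never invokes.) The same issue breaks your regime-$A$ claim that a ``slice-wise Poincar\'e-type argument'' gives $\int_{\{w=0\}}|D\bu+(D\bu)^{\mathrm T}-\Id|^2\ge c\,l_1 l_2$: what is needed there is a Korn-type rigidity for the symmetric gradient with a boundary condition on both components of $\bu$, whereas the lemma in Appendix~\ref{App:P} is an ordinary Poincar\'e inequality for a function vanishing on half the square and is used in the paper only to control $\int|Dw|^2$ by $\int|D^2w|^2$ on the good squares, not for $\bu$.

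The paper closes exactly this gap. After reducing to the deviation $\eta=\int_{\Omega_G}|D\bu+(D\bu)^{\mathrm T}-\Id|$, it either invokes the $SBD$ Korn--Poincar\'e inequality of Chambolle--Conti--Francfort applied to $\bU=(2\bu-\bx)\chi_{\Omega_G}$, or runs a self-contained argument that slices in \emph{three} distinct oblique directions $\bn,\bn',\bn''$: each direction produces a scalar function $f_\bn=2\bn\cdot\bu-x_1/n_1$ vanishing at $x_1=0$ with $Df_\bn\cdot\bn=\bn\cdot(D\bu+(D\bu)^{\mathrm T}-\Id)\bn$, and the incompatibility is detected by showing the resulting over-determined linear system for a would-be corrector has no solution for generic angles. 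This multi-directional step is precisely what defeats the counterexample above and is the key ingredient missing from your proposal; a single-direction slicing with only $E_{11}$ cannot be repaired by case-splitting on $u(l_1,\cdot)$. Beyond this, the paper also works on a square grid at scale $l$ (separating ``good'' squares where $w$ vanishes on at least half the area from ``bad'' ones counted via $\gamma$), which lets it derive the clean bound $E\ge c\min\{\gamma l,\,1,\,\sigma^2/l^2\}$ and then optimize $l$ to interpolate between the $1$ and $(\sigma\gamma)^{2/3}$ scalings; your per-slice AM--GM reaches the correct exponents but rests on the unproved rigidity step.
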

\begin{proof}
This follows immediately from Lemma \ref{Lem:LB1} and Lemma \ref{LB2} in Section \ref{LB} below,
using the fact that
$(\sigma \gamma)^{2/3} \ge \sigma$ is equivalent to $\gamma \ge \sigma^{1/2}$.
\end{proof}

\begin{remark}[Optimality of the bounds in regimes $A$ and $D$]
\label{opt}
The bounds for regimes $A$ and $D$ are optimal in the sense that the lower and upper bounds scale the same way in the parameters $\sigma$ and $\gamma$:
\begin{equation}
\nonumber
\min_V I^{(\sigma,\g)} \sim
\left\{
\begin{array}{ll}
1 & \textrm{if } (\sigma,\g) \in A, \\
\sigma & \textrm{if } (\sigma,\gamma) \in D.
\end{array}
\right.
\end{equation}
Proving optimal bounds in the whole parameter space $X$ remains an open problem.
\end{remark}

%Theorem \ref{LB1} is proved in Section \ref{LB}. Theorem \ref{UB1} is proved in Sections \ref{A}--\ref{E} and the idea of the proof is given below.
%We will see that
%the upper bound for regime $A$ is attained when the film is bonded to the substrate everywhere, for regime $B$ by a simple periodic folding pattern, and for regimes $C$ and $D$ by fold branching patterns. Roughly speaking, what distinguishes regimes $C$ and $D$ is that regime $C$ is bulk dominated, i.e., the order of the energy is determined by the deformation of the film in the interior of the domain, whereas regime $D$ is boundary dominated. This statement is made precise in Remarks \ref{Ebulk} and \ref{BvsB}.

\paragraph{Idea of the proof of Theorem \ref{UB1}.}
%We begin by considering the (non-physical) case where the buffer layer thickness $h_{\textrm{b}}=0$. This simplifies the problem by reducing the number of parameters to two, to $(\sigma,\g) \in (0,1) \times [0,\infty)$. Most of the results of this section, however, are also valid when $h_{\textrm{b}}$ is small. See Section \ref{H}.% for a precise statement).
We describe the constructions used to obtain the upper bounds in Theorem \ref{UB1}, which correspond to different folding patterns of the thin film. The type of pattern is determined by the competition between the stretching, bending and bonding energies.

In the flat regime $A$ the bonding strength $\g$ is large compared to $\sigma$ and the upper bound $I^{(\sigma,\g)} \le c \,  l_1 l_2$ is obtained by taking $w=0$ everywhere, i.e., by taking the film to be bonded to the substrate everywhere.
See Lemma \ref{Lemma:Flat}.
In regimes $B$--$D$, where the bonding strength $\g$ is smaller, this upper bound can be improved by allowing the film to relax the in-plane compression by partially delaminating from the substrate and buckling.

In the laminate regime $B$ we obtain the upper bound $I^{(\sigma,\g)} \le c \,  l_1 l_2(\sigma \g)^{2/5}$ with a simple periodic folding pattern interpolated to the clamped boundary conditions, as shown in Figure \ref{RegimeB}. Note that the film is still bonded to the substrate in a large part of the domain.
See Proposition \ref{Prop:B}.
In regimes $C$ and $D$ %, where the bonding strength $\g$ is smaller,
a better upper bound can be achieved using a self-similar branching pattern, where the period of the folding pattern decreases towards the clamped boundary.
%Each regimes require a different type of branching pattern.

\begin{figure}[h]
\begin{center}
\includegraphics*[width=\textwidth,trim=0 100 0 140]{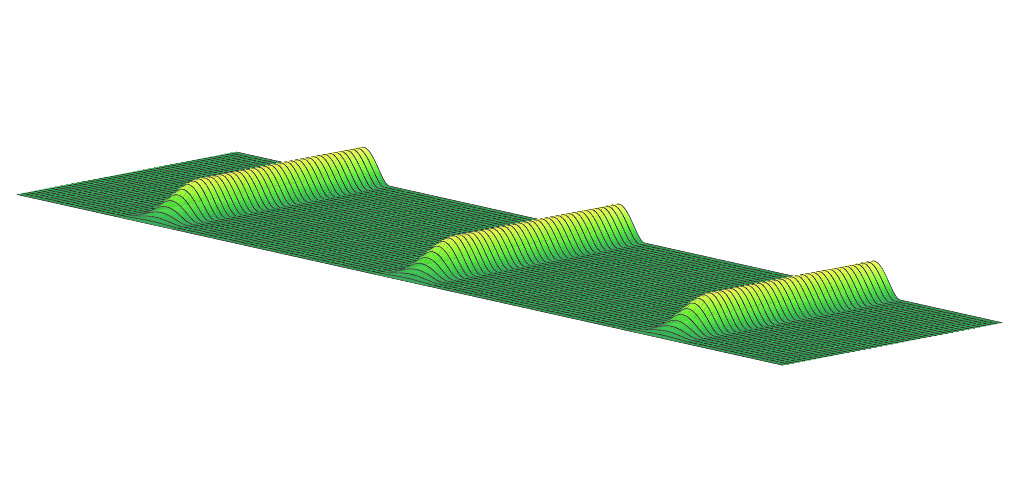}
\caption{\label{RegimeB} The upper bound construction for the laminate regime $B$. Away from the clamped boundary the film forms a periodic pattern consisting of folds in between regions where it is bonded to the substrate. This is interpolated to the clamped boundary conditions in a thin boundary layer.}
\end{center}
\end{figure}

In the subset of regime $D$ where $\gamma \le \sigma$ the bonding energy is a lower order term and the upper bound
$I^{(\sigma,\g)} \le c \,  l_1 l_2 \sigma$ can be obtained using the same period-halving construction that was used for the case $\g=0$ in
 \cite{BenBelgacem1}, as shown in Figure \ref{RegimeG}. Note that the film is delaminated from the substrate almost everywhere.

\begin{figure}[h]
\begin{center}
\includegraphics*[width=0.9\textwidth]{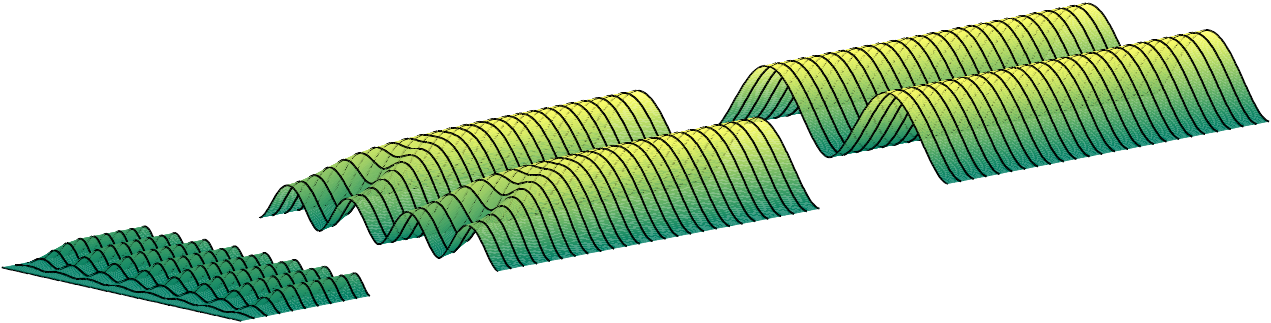}
\caption{\label{RegimeG} The upper bound construction for parameter regime $\{ \gamma \le \sigma\} \subset D$. Away from the clamped boundary the film forms a periodic folding pattern (right). The folds undergo a sequence of period-halvings (centre) before being interpolated to the clamped boundary (left). The film is delaminated from the substrate almost everywhere, but we do not pay too much bonding energy for this since the bonding strength $\g$ is small compared to $\sigma$.}
\end{center}
\end{figure}

In the rest of regime $D$ and in regime $C$ a new upper bound construction is needed.
Away from the clamped boundary we take the film to form a periodic pattern, with periodicity cells of the form shown in Figure \ref{OneFold}. It is energetically favourable for the period of the pattern to decrease towards the clamped boundary. By construction the stretching energy of each periodicity cell is zero. By requiring that the bending energy of each periodicity cell equals its bonding energy, we find that fold width $2 \delta$ is related to the period $2h$ by $\delta \sim h^{1/3}$. Therefore, as we halve the period, $h \mapsto 2^{-1} h$, we must multiply the fold width by a factor of $2^{-1/3} \approx 0.8$. This means that the relative area $\frac{\delta}{h}$ where the film is delaminated in the periodicity cell \emph{increases} as $h$ decreases. In order to achieve this we perform each period halving $h \mapsto 2^{-1} h$ using a two step refinement procedure consisting of fold shrinkage (Figure \ref{RegimeE}, right) and fold splitting and separation (Figure \ref{RegimeE}, left).
(For a real film the two branching steps would happen simultaneously, but this does not change the order of the energy.)
%fold splitting (Figure \ref{RegimeE}, right), fold growth (Figure \ref{RegimeE}, centre) and fold separation (Figure \ref{RegimeE}, left).
%, as shown in Figure \ref{RegimeE}.
This takes one cell of period $2h$ and fold width $2 \delta$ and produces two cells of period $h$ and fold width $2^{2/3} \delta \approx 1.6 \delta$.
%The fold growth step ensures that $\frac{\delta}{h}$ increases. Without it $\delta$ would simply halve as $h$ halves and the bending and bonding energies %would become unbalanced, leading to an energy of higher order.

 We repeat this refinement process until the pattern is fine enough to be interpolated to the clamped boundary conditions without paying too much stretching energy.
 We will see in the proof of Proposition \ref{Prop:overall} that if $\gamma \ge 1$, then refinement stops before $\delta=h$, i.e., before the film becomes delaminated from the substrate almost everywhere.

\begin{figure}[h]
\begin{center}
\includegraphics*[width=0.9\textwidth]{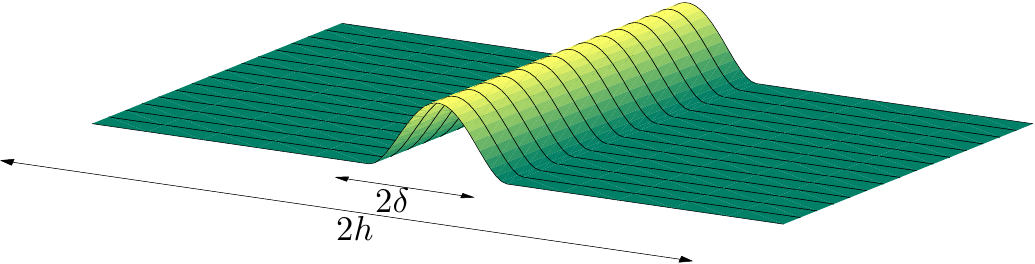}
\caption{\label{OneFold} One of the periodicity cells used for regimes $B$, $C$, $D$. A fold of width $2 \delta$ lies between two regions where the film is bonded to the substrate.}
%In regime $B$ this periodic pattern is simply interpolated to the clamped boundary conditions, as shown in Figure \ref{RegimeB}, whereas in regime $E$
%the period $2h$ is first reduced by a sequence of period halvings, $h \mapsto h/2$. One period halving is shown in Figure \ref{RegimeE}.}
\end{center}
\end{figure}

\begin{figure}[h]
\begin{center}
\includegraphics[width=0.9\textwidth]{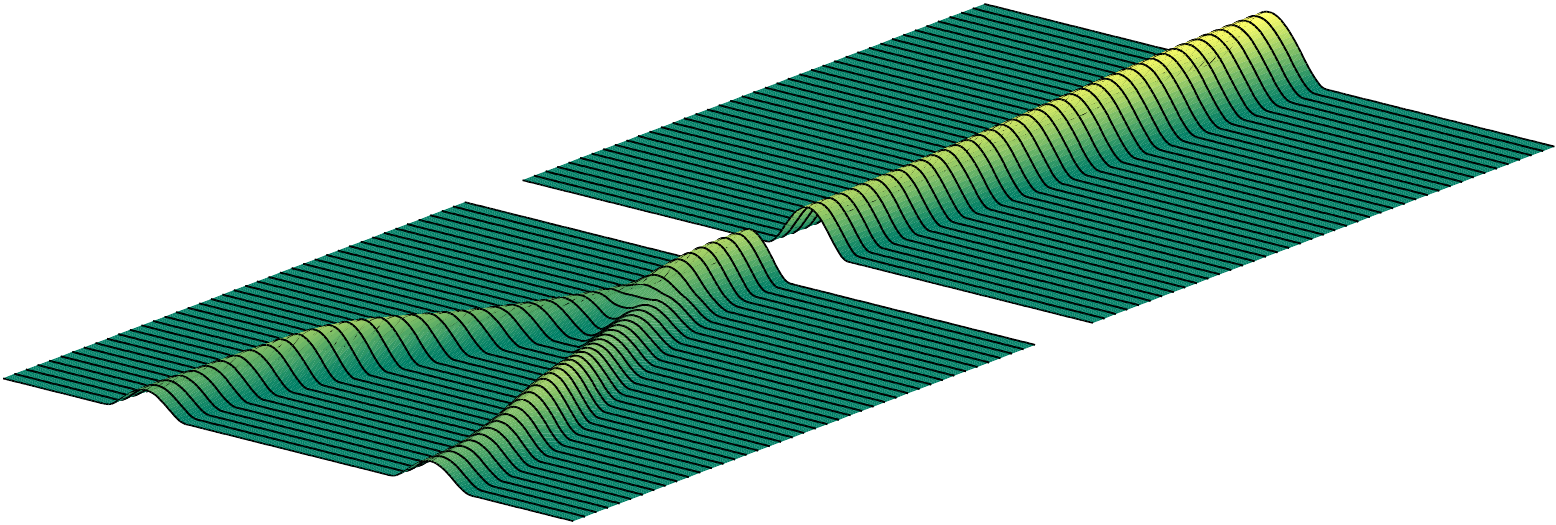}
\caption{\label{RegimeE} The branching construction used for regimes $C$ and $D$. Each refinement consists of two steps: fold shrinkage (right), where a fold is reduced in width and height, and fold splitting (left), where a fold splits into two folds of the same width.}
\end{center}
\end{figure}

%The upper bound in regime $F$ is obtained by combining the constructions used for $E$ and $G$: Construction $E$ is used in the bulk of the domain away from the clamped boundary. %
If $\gamma < 1$, however, the pattern refinement continues until $\delta=h$ and the film is delaminated from the substrate almost everywhere.
  %See Figure . $\ref{RegimeF}.
  At this point the period of the folds is still too large; if refinement were stopped here then the stretching energy in the interpolation boundary layer would be too high. Therefore we continue refining the pattern towards the clamped boundary by period-halving, $\delta \mapsto 2^{-1} \delta$, using the
  same construction as for regime $\{ \gamma \le \sigma\}$, shown in Figure \ref{RegimeG}. This continues until $\delta=l_1 \sigma$ at which point the pattern is fine enough to be interpolated to the clamped boundary conditions. See the proof of Proposition \ref{Prop:overall}.

%\begin{figure}[h]
%\begin{center}
%\includegraphics*[width=\textwidth,trim=0 0 0 24]{RegimeF.eps}
%\caption{\label{RegimeF} For parameter regime $F$ the refinement procedure shown in Figure \ref{RegimeE} is repeated until the film becomes delaminated from the substrate almost everywhere. This happens during the fold growth step as shown here (left). At this point the refinement continues by fold splitting as shown in Figure \ref{RegimeG}.}
%\end{center}
%\end{figure}

%\begin{remark}[Non-uniqueness of the upper bound constructions]
%The upper bound constructions are not unique, as is evident from the fact that (for convenience) we use five different constructions, $A$, $B$, $E$, $F$, $G$, to obtain the four upper bounds $A$, $B$, $C$, $D$ in Theorem \ref{UB1}. For example, in the boundary dominated regime $D$ we have some freedom to change the construction in the bulk of the domain without increasing the order of the energy.
%\end{remark}

%
%------------------------------------------------------------------------------------------------------------------------------------
%

\section{Lower Bound Proofs}
\label{LB}
In this section we prove the main ingredients of the lower bound in
Theorem  \ref{LB1}.
\begin{Lemma}[Lower bound for $\gamma \ge 0$]
\label{Lem:LB1}
Let $\gamma \ge 0$, $\sigma \in (0,1)$, $0< l_1\le l_2$. Then for all $(\bu,w)\in V$
\beq
\nonumber
I^{(\sigma,\g)}[\bu,w,l_1,l_2] \ge c \, l_1 l_2 \sigma.
\eeq
\end{Lemma}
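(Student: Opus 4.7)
The first observation is that the bonding energy is pointwise non-negative whenever $\gamma \ge 0$: one has $I_{\textrm{Bo}}[\bu, w, l_1, l_2] = \gamma\, |\{w > 0\}| \ge 0$. Hence for every admissible pair $(\bu, w) \in V$,
$$
I^{(\sigma,\gamma)}[\bu, w, l_1, l_2] \;\ge\; I_{\textrm{vK}}[\bu, w, l_1, l_2],
$$
and the lemma follows as soon as the $\gamma$-independent bound $I_{\textrm{vK}}[\bu, w, l_1, l_2] \ge c\, l_1 l_2 \sigma$ has been established under the clamping $\bu = \mathbf 0$, $w = 0$, $Dw = \mathbf 0$ on $\{0\} \times (0, l_2)$.

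\textbf{Invoking previous work.} This reduced inequality is precisely the optimal scaling law for compressed, clamped thin films, proved by different methods in Ben Belgacem, Conti, DeSimone and M\"uller \cite{BenBelgacem1} and in Jin and Sternberg \cite{JinSternberg}. Our admissible class $V$ imposes only the additional pointwise constraint $w \ge 0$, which merely shrinks the competitor class relative to the cited settings, so their lower bound transfers verbatim.

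\textbf{Sketch of the underlying argument.} If a self-contained proof is preferred, the core identity is
$$
\text{curl}\,\text{curl}\,\bigl(D\bu + (D\bu)^\mathrm{T} + Dw \otimes Dw - \Id\bigr) \;=\; -2\,\det D^2 w,
$$
since the symmetric gradient $D\bu + (D\bu)^\mathrm{T}$ and the constant $\Id$ are both annihilated by $\text{curl}\,\text{curl}$. Pairing this identity with a smooth scalar $\phi$ localised in a boundary strip of width $\sim \sigma l_1$ around the clamped edge, and integrating by parts (so that $w = Dw = 0$ on $\{0\}\times(0,l_2)$ kills the boundary contributions there), one obtains an estimate that plays the stretching energy $\int|{\mathcal A}|^2$ off against the bending energy $(\sigma l_1)^2 \int|D^2 w|^2$ at the natural bending length scale. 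Optimising the choice of $\phi$ yields the factor $\sigma$ and the claimed bound of order $l_1 l_2 \sigma$.

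\textbf{Main obstacle.} The principal technical subtlety lies in the mixed boundary data (clamped on one edge, free on the other three): the Helmholtz / Airy-stress-function machinery used for fully clamped plates must be adapted, and the test function $\phi$ must be chosen so that the boundary terms arising on the three free edges are controlled. This is exactly the content of \cite{BenBelgacem1, JinSternberg}, so the cleanest presentation is simply to invoke those results as a black box.
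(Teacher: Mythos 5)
Your approach is essentially the paper's: drop $I_{\textrm{Bo}}\ge 0$ to reduce to the $\gamma=0$ case, then invoke the known lower bound of Ben Belgacem--Conti--DeSimone--M\"uller and Jin--Sternberg for the pure von K\'arm\'an energy, noting that the extra constraint $w\ge 0$ only shrinks the competitor class. The one step you skip that the paper spells out is the reduction from a general rectangle $(0,l_1)\times(0,l_2)$ to the unit square (by rescaling to $l_1=1$ and then tiling $(0,1)\times(0,l_2)$ with unit squares, using $\lfloor l_2\rfloor\ge l_2/2$), which is needed because the cited lemma in \cite{BenBelgacem1} is stated for the unit square; this is a minor bookkeeping omission rather than a conceptual gap.
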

\begin{proof}
By scaling it suffices to consider the case $l_1=1$. Further, it suffices to prove the bound for $l_2=1$,
and apply it to each of the squares $(0,1)\times (k,k+1)$, $k\in \N\cap [0, l_2-1]$, and use that
$\lfloor l_2\rfloor\ge l_2/2$ for all $l_2\ge 1$.

The bound for $l_1=l_2=1$ and $\g=0$ was proven in \cite[Lemma 1]{BenBelgacem1} and implies the general case $\g \ge 0$ since
$I^{(\sigma,\g)} \ge I^{(\sigma,0)}$.
\end{proof}
%\begin{Theorem}[Lower bound for regime $A$]
%\label{LB2}
%Let $0 < l_1 < l_2$. Let $(\sigma,\g) \in A$. Then there exists a constant $c>0$, independent of $\sigma$, $\g$, $l_1$, and $l_2$, such that for all for all $(\bu,w)\in V$
%\beq
%\nonumber
%I^{(\sigma,\g)}[\bu,w,l_1,l_2] \ge c \, l_1 l_2.
%\eeq
%\end{Theorem}

\begin{Lemma}[Lower bounds for $\gamma \ge \sigma^2$]
\label{LB2}
Let $\sigma \in (0,1]$, $\gamma \ge \sigma^2$.
Then for all $(\bu,w)\in V$
\beq
\nonumber
I^{(\sigma,\g)}[\bu,w,l_1,l_2] \ge c \, l_1 l_2 \left\{
\begin{array}{ll}
1 & \textrm{if } \gamma \ge \sigma^{-1},
\\
(\sigma \gamma)^{2/3} & \textrm{if } \sigma^2 \le \gamma \le \sigma^{-1}.
\end{array}
\right.
\eeq
\end{Lemma}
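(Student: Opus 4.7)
The plan is to combine three ingredients: the bound $I^{(\sigma,\gamma)} \ge c\,l_1 l_2\,\sigma$ from Lemma \ref{Lem:LB1}, the trivial estimate $I^{(\sigma,\gamma)} \ge \gamma|\{w>0\}|$ coming from the bonding term, and a Korn--Poincar\'e type inequality provided by Appendix \ref{App:P}. By the scaling and subdivision reduction used in Lemma \ref{Lem:LB1}, one may assume $l_1 = l_2 = 1$. The key analytical observation is that on the bonded set $R := \{w = 0\}$ one has $Dw = \mathbf{0}$ a.e.\ (since $w\in H^2(\Omega)$ vanishes there), so the stretching integrand reduces to $|2\epsilon(\bu)-\Id|^2$ on $R$. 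The clamped condition $\bu(0,\cdot) = \mathbf{0}$ is incompatible with the stress-free symmetric strain $\tfrac12\Id$ (indeed $v_y = 1/2$ together with $v(0,y) = 0$ is impossible on $(0,1)$), and the inequality of Appendix \ref{App:P} is expected to make this quantitative:
\[
\int_R |2\epsilon(\bu)-\Id|^2 \ge c_P \qquad \text{whenever } |R| \ge 1/2,
\]
for some absolute constant $c_P > 0$.

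For the bound $I^{(\sigma,\gamma)} \ge c$ when $\gamma \ge \sigma^{-1}$, I would set $s := |\{w > 0\}|$ and split two cases: if $s \ge 1/2$ then $I^{(\sigma,\gamma)} \ge \gamma s \ge \sigma^{-1}/2 \ge 1/2$; otherwise $|R| \ge 1/2$ and the Poincar\'e estimate yields $I^{(\sigma,\gamma)} \ge I_\textrm{S} \ge c_P$. Either way, $I^{(\sigma,\gamma)} \ge \min(1/2, c_P)$.

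For the bound $I^{(\sigma,\gamma)} \ge c(\sigma\gamma)^{2/3}$ when $\sigma^2 \le \gamma \le \sigma^{-1}$, I would first dispose of the narrow band $\sigma^2 \le \gamma \le 8\sigma^2$: here $(\sigma\gamma)^{2/3} \le (8\sigma^3)^{2/3} = 4\sigma^2 \le 4\sigma$, and Lemma \ref{Lem:LB1} directly gives $I^{(\sigma,\gamma)} \ge c\sigma \gtrsim (\sigma\gamma)^{2/3}$. For the remaining range $8\sigma^2 < \gamma \le \sigma^{-1}$, introduce the threshold $s_0 := \sigma^{2/3}\gamma^{-1/3}$, chosen so that $\gamma s_0 = (\sigma\gamma)^{2/3}$ and $s_0 \le 1/2$. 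If $s \ge s_0$ then $I^{(\sigma,\gamma)} \ge \gamma s_0 = (\sigma\gamma)^{2/3}$; if $s < s_0$, then $|R| \ge 1 - s_0 \ge 1/2$ and, using $(\sigma\gamma)^{2/3} \le 1$ (which holds because $\gamma \le \sigma^{-1}$), the Poincar\'e estimate gives $I^{(\sigma,\gamma)} \ge c_P \ge c_P(\sigma\gamma)^{2/3}$. The principal obstacle is thus establishing the Korn--Poincar\'e inequality of Appendix \ref{App:P}: promoting the above qualitative incompatibility to a uniform quantitative bound valid on arbitrary measurable subsets $R$ of measure at least $1/2$ requires a rigidity argument insensitive to the (possibly irregular) geometry of $R$. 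Once that inequality is in hand, the proof reduces to the elementary case analyses above.
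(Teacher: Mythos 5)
The central step in your argument---the claimed rigidity estimate
\[
\int_R |2\epsilon(\bu)-\Id|^2\,d\bx \;\ge\; c_P \qquad\text{whenever }|R|\ge\tfrac12,
\]
with $R=\{w=0\}$---is not something that Appendix~\ref{App:P} provides, and it is in fact false. Appendix~\ref{App:P} proves a genuine Poincar\'e inequality: for $\bof$ vanishing on half the square, $\int_Q|\bof|^2\le C\int_Q|D\bof|^2$. That is an \emph{upper} bound on $\|\bof\|_{L^2}$ by $\|D\bof\|_{L^2}$. What you need is a \emph{lower} bound on $\int_R|2\epsilon(\bu)-\Id|^2$, i.e.\ a Korn-type rigidity statement. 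These are logically unrelated, and the paper in fact uses Appendix~\ref{App:P} for a completely different purpose: to bound $\int_{q_{ij}}|Dw|^2$ by $l^2\int_{q_{ij}}|D^2w|^2$ on the ``good'' squares, after which the rigidity of $\bu$ is handled by a separate argument (the SBD Korn--Poincar\'e theorem of Chambolle--Conti--Francfort, or the slicing construction).

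More importantly, the claimed inequality fails because nothing forces $R$ to meet the clamped boundary $\{x=0\}$. Take $R=(\tfrac14,1)\times(0,1)$ (measure $\tfrac34\ge\tfrac12$), set $\bu(x,y)=(x/2,\,y/2)$ on $R$, and extend to $(0,\tfrac14)\times(0,1)$ by $\bu(x,y)=(x/2,\,2xy)$. Then $\bu\in H^1$, $\bu(0,\cdot)=\mathbf 0$, and $\int_R|2\epsilon(\bu)-\Id|^2=0$. Your heuristic ``the clamped condition is incompatible with $\epsilon(\bu)=\tfrac12\Id$'' uses the boundary condition, but the boundary is not part of $R$; the constraint $\bu(0,\cdot)=\mathbf 0$ only constrains $\bu|_R$ indirectly through the transition region, and that transition is precisely what must be quantified. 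There is no constant-order lower bound that depends only on $|R|$; one needs information about the \emph{geometry} (in particular the perimeter) of the bad set, which is exactly what the paper extracts.

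This is why the paper's proof is more elaborate. It introduces a free length scale $l$, tiles the unit square by squares $q_{ij}$ of side $l$, declares a square bad if $|\{w=0\}\cap q_{ij}|<l^2/2$, and uses $N_B\le 2E/(\gamma l^2)$ to bound the perimeter $\calH^1(J_\bU)\le 8E/(\gamma l)$ of the bad region. Feeding the resulting $SBV$ field into the SBD Korn--Poincar\'e estimate (or into the direct slicing argument along lines in three directions) yields the dichotomy ``either $E\ge c\gamma l$ or $\eta\ge c$,'' where $\eta$ is controlled by $E^{1/2}+l^2E/\sigma^2$. This gives $E\ge c\min\{\gamma l,\,1,\,\sigma^2/l^2\}$, and it is the optimisation over the free parameter $l$ (giving $l=\sigma^{2/3}\gamma^{-1/3}$ in the intermediate regime) that produces the exponent $(\sigma\gamma)^{2/3}$. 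In your write-up the balancing exponent $s_0=\sigma^{2/3}\gamma^{-1/3}$ appears, but only as a threshold on the delaminated \emph{area}; there is no scale at which rigidity is being tested, and no mechanism to convert smallness of $|\{w>0\}|$ into control of the jump set of $\bu$. The small-band reduction ($\sigma^2\le\gamma\le 8\sigma^2$ from Lemma~\ref{Lem:LB1}) and the threshold case analysis are fine as far as they go, but the load-bearing step is missing.
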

\begin{proof}%[Proof of Theorem \ref{LB2}]
As above, it suffices to consider the case $l_1=l_2=1$. Let $l \in (0,1]$. Subdivide $\Omega:=(0,1)^2$ into $\lceil l^{-1} \rceil^2$ squares
of side length $l$, denoted by $q_{ij}$:
\begin{equation*}
q_{ij} := l(i,i+1) \times l(j,j+1) \; \cap \; (0,1)^2, \qquad i,j \in \{ 0, \ldots , \lceil l^{-1} \rceil -1 \}.
\end{equation*}
We say that $q_{ij}$ is \emph{good} if
\begin{equation*}
|\{w=0\} \cap q_{ij}| \ge \frac{l^2}{2}.
\end{equation*}
Otherwise $q_{ij}$ is \emph{bad}.
Let $N_B$ be the number of bad squares, $\Omega_G$ be the union of all the good squares, and $\Omega_B$ be the union of the bad ones.
Let $E$ be the energy of a given deformation $(\bu,w)$.
It is easy to see that
\begin{equation*}
N_B \frac{l^2}{2} \; \le \; |\Omega_B \cap \{ w>0 \}| \; \le \; |\{ w>0 \}| \; \le \; \frac{E}{\gamma},
\end{equation*}
which implies
\begin{equation}
\label{NBb}
N_B \le \frac{2 E}{\gamma l^2}.
\end{equation}
Let $q_{ij}$ be a good square. By the Poincar\'e inequality
\begin{equation}
\label{eq:Poin}
\int_{q_{ij}} |D w|^2 \, d \bx \le l^2 \int_{q_{ij}} |D^2 w|^2 \, d \bx.
\end{equation}
%{\color{red} [How do we obtain the Poincar\'e constant $l^2$? I know how to obtain it if $w=0$ on at least one edge of $q_{ij}$, but not in this case.]}
(The  Poincar\'e constant $l^2$ is obtained in Appendix \ref{App:P}.)
Summing over all good squares yields
\begin{equation}
\label{b1}
\int_{\Omega_G} |Dw|^2 \, d \bx \le l^2 \int_{\Omega_G} |D^2 w|^2 \, d \bx \le \frac{l^2}{\sigma ^2} E.
\end{equation}
Define
\begin{equation*}
\eta := \int_{\Omega_G} | D\bu + D\bu^T - \Id | \, d \bx.
\end{equation*}
We claim that
\begin{equation}
\label{eta}
\eta \le E^{1/2} + \frac{l^2}{\sigma ^2} E.
\end{equation}
Proof: First note that
\begin{equation}
\label{ineq1}
E \ge \int_{\Omega} |D \bu + D \bu^T - \Id + Dw \otimes Dw |^2 \, d \bx \ge \left( \int_{\Omega} |D \bu + D \bu^T - \Id + Dw \otimes Dw | \, d \bx \right)^2
\end{equation}
by the Cauchy-Schwarz inequality since $\Omega$ has area 1.
By the definition of the Frobenius norm
\begin{equation*}
| D w \otimes Dw |^2 = w_x^4 + 2 w_x^2 w_y^2 + w_y^4 = (w_x^2 + w_y^2)^2 = |Dw|^4.
\end{equation*}
Therefore
\begin{equation}
\label{ineq2}
\int_{\Omega_G} | D w \otimes Dw | \, d \bx = \int_{\Omega_G} |Dw|^2 \, d \bx \le \frac{l^2}{\sigma ^2} E
\end{equation}
by equation \eqref{b1}.
Using the triangle inequality and combining \eqref{ineq1} and \eqref{ineq2} gives
\begin{equation*}
\eta \le \int_{\Omega_G} |D \bu + D \bu^T - \Id + Dw \otimes Dw | \, d \bx + \int_{\Omega_G} | D w \otimes Dw |\, d \bx
\le E^{1/2} + \frac{l^2}{\sigma ^2} E
\end{equation*}
as required.

At this point we define $\bU(\bx):=(2\bu(\bx)-\bx)\chi_{\Omega_G}(\bx)$. Then $\bU\in SBV(\Omega;\R^2)\subset SBD(\Omega)$
and its jump set $J_\bU$ is contained in the union of the boundaries of the bad squares, hence
$\calH^1(J_\bU)\le 4 l N_B\le 8E/\gamma l$ by (\ref{NBb}). At the same time, $\|e(\bU)\|_{L^1(\Omega)}=\eta$.
By the Korn-Poincar\'e inequality for $SBD$ functions proven
in \cite[Th. 1]{ChambolleContiFrancfort} (with $p=1$) we obtain that there are a set $\omega_\Gamma\subset\partial\Omega$
with $\calH^1(\omega_\Gamma)\le c\calH^1(J_\bU)$
and
an affine function $\ba$ with $D\ba+D\ba^T=0$ such that
$\int_{\{0\}\times (0,1)\setminus \omega_\Gamma} |\ba(\bx)-\bx| d\calH^1(\bx)\le c\eta$.
Since $a_2$ is constant on this set,
if the length of $\omega_\Gamma$ is less than $1/2$ we obtain
 $\eta\ge c$ for some universal constant $c$. Conversely,
if the length of $\omega_\Gamma$ is more than $1/2$ we obtain
$1/2\le c  \calH^1(J_\bU)\le c E/\gamma l$. Therefore
 at least one of $E\ge c\gamma l$ and $ \eta\ge c$
holds. Recalling (\ref{eta}) this gives
\begin{equation}
\label{LBalmost}
E \ge c \min \left\{ \gamma l, 1, \frac{\sigma^2}{l^2} \right\}.
\end{equation}
We consider different choices of $l$. Equating the second two terms on the right-hand side of \eqref{LBalmost} gives $l = \sigma$ and
\begin{equation*}
E \ge c \min \{ \sigma \gamma, 1 \} = c \quad \textrm{if} \quad \sigma \gamma \ge 1.
\end{equation*}
Equating the first and third terms on the right-hand side of \eqref{LBalmost} gives $l = \sigma^{2/3} \gamma^{-1/3}$ and
\begin{equation*}
E \ge c \min \{ (\sigma \gamma)^{2/3}, 1 \} = c (\sigma \gamma)^{2/3}  \quad \textrm{if} \quad \sigma \gamma \le 1.
\end{equation*}
Since we require that $l \in (0,1]$, then this estimate is only valid if in addition $\sigma^{2/3} \gamma^{-1/3}\le 1 \Longleftrightarrow \gamma \ge\sigma^2$.
This completes the proof.

In order to make the argument more transparent we also give a self-contained proof, which does not apply
the rigidity estimate in  \cite[Th. 1]{ChambolleContiFrancfort} but instead makes direct
usage of some ideas from its proof, which can be much simplified in the present context.

\begin{figure}
\centerline{\includegraphics[height=0.2\textheight]{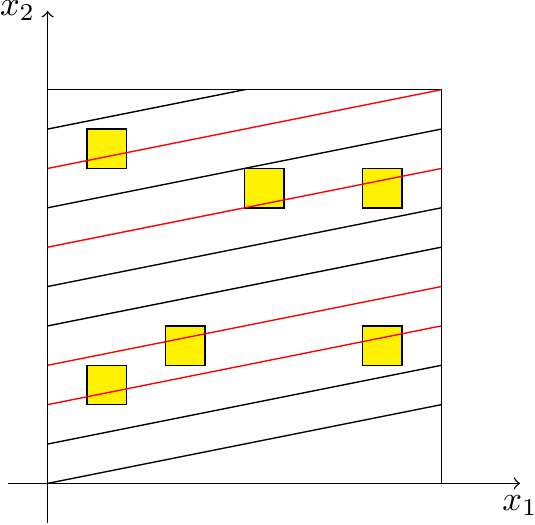}}
\caption{Sketch of the geometry in the proof of Lemma \ref{LB2}. The squares in $\Omega_B$ are shown,
as well as some of the lines $(0,y)+\bn \R$.}
\label{Fig7}
\end{figure}

We assume for now that
\begin{equation}
\label{Eub}
E \le \frac{1}{16} \gamma l.
\end{equation}
%(Otherwise the deformation satisfies the desired lower bound and there is nothing to prove: $E > \frac{1}{{\color{red} 32}} \gamma l = \frac{1}{{\color{red} %32}} \gamma \sigma \ge
%\frac{1}{{\color{red} 32}}$ since $(\sigma,\gamma) \in A$.)
Therefore by equations \eqref{NBb} and \eqref{Eub}
\begin{equation}
\label{numBad}
N_B l \le \frac{1}{8}.
\end{equation}

Pick $\bn := (\cos \theta, \sin \theta) \in S^1$ with $n_1 \ge \frac{1}{\sqrt{2}}$, equivalently $\theta \in [-\pi/4,\pi/4]$.
Define
\begin{equation*}
\omega_{\bn} := \Big\{ y \in (0,1) \, : \,
\left\{ (0,y) + \bn \mathbb{R}\right\} \cap \Omega_B = \emptyset \Big\},
\end{equation*}
see Figure \ref{Fig7}.
Note that $\mathcal{L}^1 (\omega_{\bn}^c)$ is maximised, e.g., when $\theta = -\pi /4$ and
the union of bad squares $\Omega_B$ is given by
\begin{equation*}
 \bigcup_{i=0}^{N_B-1} (0,l) \times l(2i,2i+1).
\end{equation*}
In this case each line
 $(0,y) + \bn \mathbb{R}$ intersects at most one bad square and
 $\mathcal{L}^1 (\omega_{\bn}^c)=2 l N_B$.
By combining this and \eqref{numBad} we obtain
\begin{equation*}
\mathcal{L}^1 (\omega_{\bn}) \ge 1 - 2l N_B
\ge \frac{3}{4}
\end{equation*}
for all admissible $\bn$.
For $y \in \omega_\bn$ let $I_\bn(y) := \left\{ (0,y) + \bn \mathbb{R}\right\} \cap (0,1)^2$ and
\begin{equation*}
\Omega_\bn := \bigcup_{y \in \omega_\bn} I_\bn (y).
\end{equation*}
We estimate
\begin{equation}
\label{Omn}
\int_{\Omega_\bn} | D \bu + D \bu^T - \Id | \, d \bx \le \int_{\Omega_G} | D \bu + D \bu^T - \Id | \, dx = \eta.
\end{equation}
The function $f_n(\bx) := 2 \bn \cdot \bu(\bx) - \frac{x_1}{n_1}$ vanishes at $x_1=0$ since $\bu$ satisfies the
boundary condition $\bu(0,y)=(0,0)$. Also
\begin{equation}
\label{Dfn}
Df_\bn \cdot \bn = 2 \bn \cdot D \bu \cdot \bn - 1 = \bn \cdot (D \bu + D \bu^T - \Id) \bn.
\end{equation}
By the Poincar\'e inequality, \eqref{Dfn} and \eqref{Omn}
\begin{equation}
\label{genP}
\int_{\Omega_\bn} | f_\bn | \, d \bx \le c \int_{\Omega_\bn} | Df_\bn \cdot \bn | \, d \bx
\le c \int_{\Omega_\bn} | D \bu + D \bu^T - Id | \, d \bx \le c \eta
\end{equation}
for some $c>0$. Fix three different, admissible values of $\bn$, called $\bn$, $\bn'$, $\bn''$, with corresponding angles $\theta$, $\theta'$,
$\theta''$. Set $\tilde{\Omega}=\Omega_\bn \cap \Omega_{\bn'} \cap \Omega_{\bn''}$. Then
\begin{equation*}
| \tilde{\Omega} | > \frac{1}{64},
\end{equation*}
where the number on the right-hand side is obtained by taking
$\theta = \pi/4$, $\theta'=-\pi/4$, $\theta''=0$, $\omega_\bn = [1/4,1]$, $\omega_{\bn'} = [0,3/4]$, $\omega_{\bn''} = [0,3/8] \cup [5/8,1]$.
%{\color{red}[This estimate is different from the one given in lb.pdf.]}

Given $x_1 > 0$, consider the following over-determined linear system for $\bv \in \mathbb{R}^2$:
\begin{equation*}
2 \bn \cdot \bv = \frac{x_1}{n_1}, \quad 2 \bn' \cdot \bv = \frac{x_1}{n'_1}, \quad 2 \bn'' \cdot \bv = \frac{x_1}{n''_1},
\end{equation*}
which can be written as
\begin{equation}
\label{ls}
2
\begin{pmatrix}
\bn \\ \bn' \\ \bn''
\end{pmatrix}
\bv
= x_1
\begin{pmatrix}
1/n_1 \\ 1/n_1' \\ 1/n_1''
\end{pmatrix}
\end{equation}
if we consider $\bn$, $\bn'$, $\bn''$ to be row vectors.
It is easy to check that the null-space of the adjoint of the matrix on the left-hand side is spanned by $(\sin(\theta'-\theta''),\sin(\theta''-\theta'),\sin(\theta-\theta'))$. Therefore \eqref{ls} has a solution $\bv$ if and only if
\begin{align*}
0 & = \begin{pmatrix}
1/n_1 \\ 1/n_1' \\ 1/n_1''
\end{pmatrix}
\cdot
\begin{pmatrix}
\sin(\theta'-\theta'') \\ \sin(\theta''-\theta') \\ \sin(\theta-\theta')
\end{pmatrix}
\\
& = \frac 12
\big(
\cos^2 \theta (\sin 2 \theta'' - \sin 2 \theta')
+
\cos^2 \theta' (\sin 2 \theta - \sin 2 \theta'')
+
\cos^2 \theta'' (\sin 2 \theta' - \sin 2 \theta)
\big).
\end{align*}
Now choose $\theta>0$, $\theta'=-\theta$. Then the right-hand side is nonzero since $\theta'' \ne \pm \theta$, and there is no solution $\bv$ to \eqref{ls}.
%{\color{red} [Is this also true for arbitrary admissible $\bn$, $\bn$, $\bn''$? This is claimed in lb.pdf but I don't know how to prove it. We don't need it %in general anyway.]}
Therefore for this choice of $\bn$, $\bn'$, $\bn''$
\begin{equation}
\label{minv}
\min_{v \in \mathbb{R}^2} \left|2 \bn \cdot \bv - \frac{x_1}{n_1}\right| + \left|2 \bn' \cdot \bv - \frac{x_1}{n'_1}\right| + \left|2 \bn'' \cdot \bv - \frac{x_1}{n''_1}\right| > 0
\end{equation}
for any $x_1 >0$ and uniformly if $x_1 \ge x_1^*> 0$. By \eqref{eta}, \eqref{genP}, \eqref{minv}
\begin{align}
\nonumber
E^{1/2} + \frac{l^2}{\sigma^2} E & \ge \eta \\
\nonumber
& \ge \frac{1}{3c} \int_{\tilde{\Omega} \cap \{ x_1 \ge x_1^*\}} |f_\bn| + |f_{\bn'}| + |f_{\bn''}| \, d \bx \\
\nonumber
& \ge \frac{1}{3c} |\tilde{\Omega} \cap \{ x_1 \ge x_1^*\}|
\min_{\substack{v \in \mathbb{R}^2 \\ x_1 \in [x_1^*,1]}} \left|2 \bn \cdot \bv - \frac{x_1}{n_1}\right| + \left|2 \bn' \cdot \bv - \frac{x_1}{n'_1}\right| + \left|2 \bn'' \cdot \bv - \frac{x_1}{n''_1}\right|
\\
\label{3.14}
& \ge \tilde{c}
\end{align}
for some constant $\tilde{c}>0$.
Putting together \eqref{Eub} and \eqref{3.14} leads to (\ref{LBalmost}), and the proof is concluded as above.
\end{proof}

\section{Upper Bound Construction for the Flat Regime $A$}
\label{A}
In this section we prove Theorem \ref{UB1} for parameter regime $A$.
In this regime $\g$ is very large and the upper bound is obtain by taking the film to be bonded to the substrate everywhere.
\begin{Lemma}[Energy of the flat construction]
\label{Lemma:Flat}
There exists a constant $c>0$ such that for all $\sigma \in (0,1)$, $\gamma > 0$
\begin{equation}
\label{A.1}
\min_V I^{(\sigma,\gamma)} \le  c \, l_1 l_2.
\end{equation}
 \end{Lemma}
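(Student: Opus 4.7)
The plan is to exhibit an explicit admissible pair $(\bu,w)\in V$ whose energy is bounded by $c\, l_1 l_2$. Since we are in regime $A$ where $\gamma$ is large, the idea stated in the paper is that the film should stay bonded to the substrate everywhere, so a natural candidate is the trivial choice
\[
\bu \equiv \mathbf{0}, \qquad w \equiv 0.
\]

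First I would verify admissibility: with these choices, the clamped boundary conditions $\bu=\mathbf{0}$, $w=0$, $Dw=\mathbf{0}$ on $\{0\}\times(0,l_2)$ are trivially satisfied, $w\ge 0$ everywhere, and the regularity requirements $\bu\in H^1(\Omega;\R^2)$, $w\in H^2(\Omega)$ are immediate. Hence $(\mathbf{0},0)\in V$.

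Next I would simply plug into the energy functional. Since $D\bu=\mathbf{0}$, $Dw=\mathbf{0}$, $D^2w=0$, and $\{w>0\}=\emptyset$, the three contributions evaluate to
\[
I_{\mathrm{S}} = \int_0^{l_1}\!\!\int_0^{l_2} |-\Id|^2\,dx\,dy = 2\,l_1 l_2, \qquad I_{\mathrm{Be}} = 0, \qquad I_{\mathrm{Bo}} = 0,
\]
so that $I^{(\sigma,\gamma)}[\mathbf{0},0,l_1,l_2] = 2\,l_1 l_2$. Taking the minimum over $V$ yields \eqref{A.1} with $c=2$.

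There is essentially no obstacle here: the key observation is that paying the full in-plane compression energy in the stretching term is \emph{already} of the target order $l_1 l_2$, so no delamination or buckling is needed. The point of the lemma is simply to record the trivial baseline bound, against which the constructions in regimes $B$, $C$, $D$ must compete once $\gamma$ becomes small enough.
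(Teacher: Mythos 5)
Your proof is correct and takes exactly the same approach as the paper: plugging the trivial admissible pair $(\bu,w)=(\mathbf{0},0)$ into the energy and observing that $I_{\mathrm{S}}=2\,l_1 l_2$ while the bending and bonding terms vanish. The paper additionally remarks that $(u,v,w)=(x/2,0,0)$ gives a smaller constant, but this refinement is not needed for the stated bound.
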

 \begin{proof}
The displacement field $(u,v,w) = (0,0,0)$ satisfies the desired upper bound \eqref{A.1}.
%\beq
%\label{A.1}
%I^{(\sigma,\g)}[\bu,w,l_1,l_2] \le c \, l_1 l_2.
%\eeq
Note that a smarter choice of displacement field is  $(u,v,w) = (x/2,0,0)$, which yields the same bound \eqref{A.1} but with a smaller constant $c$. Here the film relaxes compression in the $x$-direction, the direction orthogonal to the clamped boundary, by spreading out.
\end{proof}
%
%
%
%
%
%--------------------------------------------------------------------------------------------------------------------
%
%
%
%
%
\section{Upper Bound Construction for the Laminate Regime $B$}
\label{B}
In this section we prove Theorem \ref{UB1} for parameter regime $B$.
In this regime it is favourable for the film to buckle, but not to exhibit branching patterns. See Figure \ref{RegimeB}.

In this and the next Section we work in components, so we write $\bx=(x,y)$ and
$\bu=(u,v)$, the stretching energy takes the form
\begin{equation}\label{eqiscomponents}
I_{\textrm{S}} = \int_\Omega (2 u_x + w_x^2 - 1)^2 \, d \bx
+ 2 \int_\Omega (u_y + v_x + w_x w_y)^2 \, d \bx
+ \int_\Omega (2 v_y + w_y^2 -1)^2 \, d \bx.
\end{equation}
\subsection{Construction in the Interior}
\label{Bs1}
First we construct a displacement field on an interior rectangle of $(0,l_1) \times (0,l_2)$, away from the clamped boundary $\{x=0\}$. By translation invariance of the energy we can take the rectangle to be $(0,l)\times(-h,h)$ with $l < l_1$, $2h \le l_2$.
Take $\delta < h$ and consider a vertical displacement of the form
\beqn{B.2}
w(x,y) := w(y) :=
\left\{
\begin{array}{cl}
0 & y \in (-h,-\delta], \\
\displaystyle \frac{A}{2} \left(1+\cos \frac{\pi y}{\delta} \right) & y \in [-\delta,\delta], \\
0 & y \in [\delta,h).
\end{array}
\right.
\eeq
This describes an $x$-independent displacement where the film has one fold of height $A$ and width $2 \delta$, and is bonded down to the substrate elsewhere, see Figure \ref{OneFold}.
Choosing $u$ and $v$ so that the stretching energy vanishes, i.e.,  $I_\mathrm{S} =0$, and with $v(x,-h)=v(x,h)=0$
yields
\beqn{B.3}
\begin{aligned}
u(x,y) & := u(x) := \frac 12 x + d, \\
v(x,y) & := v(y) :=
\left\{
\begin{array}{cl}
\smallskip
\displaystyle \frac 12 (y+h) & y \in (-h,-\delta], \\
\smallskip
\displaystyle \frac{y}{2} - \frac{A^2 \pi^2}{16 \delta^2} \left(y - \frac{\delta}{2 \pi} \sin \frac{2 \pi y}{\delta} \right)
%\frac{y}{2} \left( 1 - \frac{h}{\delta} \right) + \frac{h}{4 \pi} \sin \frac{2 \pi y}{\delta}
& y \in [-\delta,\delta],
\\
\displaystyle \frac 12 (y-h) & y \in [\delta,h),
\end{array}
\right.
\end{aligned}
\eeq
for any constant $d$.
In order for $v$ to be continuous we must have
\beqn{B.4}
A^2 = \frac{8}{\pi^2}  \delta h.
\eeq
% Thus
% \beqn{}
% \begin{aligned}
% v & = v(y) =
% \left\{
% \begin{array}{cl}
% \frac 12 (y+h) & y \in (-h,-\delta], \\
% %\frac{y}{2} - \frac{A^2 \pi^2}{16 \delta^2} \left(y - \frac{\delta}{2 \pi} \sin \frac{2 \pi y}{\delta} \right)
% \frac{y}{2} \left( 1 - \frac{h}{\delta} \right) + \frac{h}{4 \pi} \sin \frac{2 \pi y}{\delta}
% & y \in [-\delta,\delta],
% \\
% \frac 12 (y-h) & y \in [\delta,h).
% \end{array}
% \right.
% \end{aligned}
% \eeq
It is a simple calculus exercise to verify the following:
\begin{Lemma}[Energy of the basic laminate construction]
\label{Energy:Bint}
The displacement $(u,v,w)$ defined in equations \eqref{B.2}--\eqref{B.4} has the following energy on the rectangle $(0,l)\times(-h,h)$:
%\begin{aligned}
%\textrm{Stretching energy} & = 0,\\
%\textrm{Bending energy} & = 2 \pi^2 \sigma^2 \frac{lh}{\delta^2},\\
%\textrm{Bonding energy} & = 2 \g l \delta.
%\end{aligned}
%I_{\textrm{S}} = 0, \quad I_{\textrm{Be}} = 2 \pi^2 (\sigma l_1)^2 \frac{lh}{\delta^2}, \quad
%I_{\textrm{Bo}} = 2 \g l \delta.
%\eeq
\begin{equation*}
\label{B.5}
I^{(\sigma,\gamma)} = 2 \pi^2 (\sigma l_1)^2 \frac{lh}{\delta^2} + 2 \g l \delta.
\end{equation*}
\end{Lemma}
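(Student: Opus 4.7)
The plan is to compute each of the three contributions to $I^{(\sigma,\gamma)}$ separately, exploiting the fact that $w$ depends only on $y$, so $w_x$, $w_{xx}$, and $w_{xy}$ vanish identically, and in fact $u$ depends only on $x$ while $v$ depends only on $y$. This makes all cross terms transparent and leaves only a handful of one-dimensional integrals.

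First I would verify that the stretching energy vanishes identically, using the component form \eqref{eqiscomponents}. The first term $2u_x + w_x^2 - 1 = 1 + 0 - 1 = 0$ from the formula for $u$ in \eqref{B.3}. The shear term $u_y + v_x + w_x w_y$ vanishes because all three summands are zero. For the third term $2v_y + w_y^2 - 1$, outside $[-\delta,\delta]$ we have $v_y = 1/2$ and $w_y=0$, so it vanishes; inside the fold one computes $w_y^2 = \frac{A^2\pi^2}{4\delta^2}\sin^2(\pi y/\delta) = \frac{A^2\pi^2}{8\delta^2}(1-\cos(2\pi y/\delta))$ and checks that this exactly cancels $1-2v_y$ from \eqref{B.3}. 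In fact, formulas \eqref{B.3}--\eqref{B.4} were engineered precisely so that this cancellation (and continuity of $v$ at $y=\pm\delta$) holds.

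Next I would compute the bending energy. Since the only nonzero second derivative is $w_{yy} = -\frac{A\pi^2}{2\delta^2}\cos(\pi y/\delta)$ on $[-\delta,\delta]$, we have
\begin{equation*}
\int_0^l\!\!\int_{-h}^h |D^2w|^2\,dy\,dx = l\int_{-\delta}^{\delta} w_{yy}^2\,dy = \frac{A^2\pi^4 l}{4\delta^4}\int_{-\delta}^{\delta}\cos^2\!\tfrac{\pi y}{\delta}\,dy = \frac{A^2\pi^4 l}{4\delta^3}.
\end{equation*}
Substituting the matching condition $A^2 = \frac{8}{\pi^2}\delta h$ from \eqref{B.4} yields $\frac{2\pi^2 lh}{\delta^2}$, and multiplying by the prefactor $(\sigma l_1)^2$ gives the first term in the claimed identity.

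Finally, the bonding energy is immediate: the set $\{w>0\}$ is (up to a null set) the strip $(0,l)\times(-\delta,\delta)$, which has area $2l\delta$, so $I_{\mathrm{Bo}} = 2\gamma l\delta$. Adding the three contributions gives the result. There is no real obstacle here; the whole point is that the construction was chosen so that $I_{\mathrm{S}}\equiv 0$ and everything reduces to one trigonometric integral plus the trivial area computation.
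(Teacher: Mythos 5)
Your proposal is correct and is precisely the ``simple calculus exercise'' the paper invokes without writing out: verify $I_{\mathrm{S}}\equiv 0$ from the choice of $u$, $v$ and the matching condition \eqref{B.4}, integrate $w_{yy}^2$ over the fold, and compute the area of $\{w>0\}$. The computations check out, including the trigonometric integral $\int_{-\delta}^{\delta}\cos^2(\pi y/\delta)\,dy=\delta$ and the substitution of $A^2=\tfrac{8}{\pi^2}\delta h$.
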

% By minimising the total energy on the rectangle $(0,l)\times(-h,h)$ with respect to $\delta$ we obtain
% \beqn{T.7}
% \delta^3 = 2 \pi^2 \sigma^2 \frac{h}{\g},
% \eeq
% which implies that
% \beqn{T.8}
% A^2 =  (2^{1/3}8 \, \pi^{-4/3})\, \sigma^{2/3} \g^{-1/3} h^{4/3}.
% \eeq
% Note that fitting equation \eqref{T.7} or \eqref{T.8} with experimental data yields a value for $\g$.
% See Section \ref{EV}. [No value for $\g$ is given in the paper by MeiThurmer2007 et al.~(2007) and they have not measured it. Sugimoto \& Arimoto (1992) report bond strengths for SiO$_2$/Si, but not for SiGe/Si.]

%
%-------------------------------------------------------------
%
\subsection{Construction in the Boundary Layer}
\label{Bs2}
 We define a displacement field
$(\tilde{u},\tilde{v},\tilde{w})$ on a boundary layer rectangle $(0,\varepsilon) \times (-h,h)$ by interpolating between the clamped boundary conditions \eqref{I.4} and the displacements introduced in equations \eqref{B.2}--\eqref{B.4}: Let
%equations \eqref{B.2}, \eqref{B.3}:
\begin{equation}
\label{B.7}
%\begin{aligned}
\tilde{u}(x) := \frac 12 x, \quad
% \\
\tilde{v}(x,y) := \psi^2\left( \frac{x}{\varepsilon} \right) v(y), \quad
% \\
\tilde{w}(x,y) := \psi \left( \frac{x}{\varepsilon} \right) w(y),
%\end{aligned}
\end{equation}
where $\psi$ is the cubic interpolating polynomial satisfying $\psi(0)=0$, $\psi(1)=1$, $\psi'(0) = \psi'(1) = 0$, i.e.,
\begin{equation*}
\label{B.8}
\psi(t) := t^2(3-2t).
\end{equation*}
Note that in \eqref{B.7} we take $\tilde{v} = \psi^2 v$ rather than the more natural choice of $\tilde{v} = \psi v$ since then the stretching term $\tilde{w}_y^2 + 2 \tilde{v}_y -1$ is of order $1$ rather than of order $h/\delta$. See equation \eqref{B.13}.

We extend $(\tilde{u},\tilde{v},\tilde{w})$ to the whole boundary layer $(0,\varepsilon) \times (0,l_2)$ by gluing together $l_2/2h$ copies of $(\tilde{u},\tilde{v},\tilde{w})$ along their horizontal boundaries. (If $2h$ does not divide $l_2$, then we glue together $\lfloor l_2/2h \rfloor$
copies to get a construction on $(0,\varepsilon) \times (0,2h \lfloor l_2/2h \rfloor)$ and define $u = x/2$, $v = w = 0$ on $(0,\varepsilon) \times (2h \lfloor l_2/2h \rfloor,l_2)$. For simplicity we assume that $2h$ divides $l_2$ in the rest of the paper since it does not affect the energy bound.)
\begin{Lemma}[Energy of the boundary layer construction]
\label{Energy:BBL}
Let $0<\delta \le \varepsilon$, $\delta\le h\le l_2$.
The displacement field $(\tilde{u},\tilde{v},\tilde{w}):(0,\varepsilon) \times (0,l_2) \to \mathbb{R}^3$ defined above satisfies the following energy bound:
\beqn{B.16}
I^{(\sigma,\gamma)} \le C l_2 \left( \frac{h^2}{\e} + \e + (\sigma l_1)^2 \frac{\e}{\delta^2} +\g \frac{\e \delta}{h} \right).
%I_{\textrm{S}} \le l_2 \left[ \frac{h^2}{\e} + \e \right], \quad
%I_{\textrm{Be}} \le l_2 (\sigma l_1)^2 \frac{\e}{\delta^2}, \quad
%I_{\textrm{Bo}} \le l_2 \g \frac{\e \delta}{h}.
\eeq
\end{Lemma}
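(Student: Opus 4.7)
\medskip
\noindent\textbf{Proof plan.}

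The plan is to bound the three parts of the energy, $I_{\mathrm{S}}$, $I_{\mathrm{Be}}$, and $I_{\mathrm{Bo}}$, separately, using the explicit form of the interpolation. After a direct substitution, each of the three stretching integrands in \eqref{eqiscomponents} simplifies in a useful way. Because $\tilde u = x/2$, the first integrand reduces to $\tilde w_x^{\,4} = (\psi'(x/\varepsilon)/\varepsilon)^4\,w(y)^4$. Because the interior construction was chosen so that $2v'(y)+w'(y)^2=1$, the third integrand collapses via
\[
 2\tilde v_y+\tilde w_y^{\,2}-1=\psi(x/\varepsilon)^2\bigl(2v'(y)+w'(y)^2\bigr)-1=\psi(x/\varepsilon)^2-1.
\]
For the middle (shear) term one has $\tilde u_y=0$ and $\tilde v_x+\tilde w_x\tilde w_y=\frac{\psi\psi'}{\varepsilon}(2v+ww')$. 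These identities isolate the $x$- and $y$-dependence and reduce everything to one-dimensional integrals.

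I would then record that $\psi$ is a fixed cubic with $\int_0^1(\psi^2-1)^2\,ds$, $\int_0^1(\psi')^4\,ds$, $\int_0^1(\psi\psi')^2\,ds<\infty$, which gives constants in all $x$-integrals once one changes variables $s=x/\varepsilon$. On the $y$ side I would use the explicit bounds $|w|\le A$, $|w'|\le CA/\delta$, $|w''|\le CA/\delta^2$ on the fold $(-\delta,\delta)$, with $w$ and its derivatives vanishing outside, together with $|v|\le Ch$ on $(-h,h)$ and $|ww'|\le CA^2/\delta\le Ch$ (using $A^2=8\delta h/\pi^2$). The cell $(0,\varepsilon)\times(-h,h)$ then gets repeated $l_2/(2h)$ times along $y$, producing an overall factor $l_2/h$.

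Assembling the pieces, the third stretching term gives $\int_0^\varepsilon(\psi^2-1)^2\,dx\cdot l_2=C\varepsilon l_2$; the first stretching term gives $C(A^4\delta/\varepsilon^3)\cdot(l_2/h)=C\delta^3 h l_2/\varepsilon^3$, which is dominated by $h^2l_2/\varepsilon$ since $\delta\le h\le\varepsilon$; and the shear term gives $C(h^2\cdot h+h^2\delta)(l_2/h)/\varepsilon\le Ch^2l_2/\varepsilon$, where both the fold region and the two flat regions of each period contribute at most the same order because $|v|\le Ch$ throughout. For the bending energy the three contributions from $|D^2\tilde w|^2=(\psi''/\varepsilon^2)^2w^2+2(\psi'/\varepsilon)^2(w')^2+\psi^2(w'')^2$ scale as $\delta^2l_2/\varepsilon^3$, $l_2/\varepsilon$, and $\varepsilon l_2/\delta^2$ respectively; since $\delta\le\varepsilon$, the last dominates, yielding $(\sigma l_1)^2\varepsilon l_2/\delta^2$. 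Finally $\{\tilde w>0\}=(0,\varepsilon)\times\{w>0\}$ has measure $\varepsilon\cdot2\delta\cdot l_2/(2h)$, so $I_{\mathrm{Bo}}=\gamma\varepsilon\delta l_2/h$, which is exactly the last term of \eqref{B.16}.

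The only nontrivial bookkeeping is in the shear term, where one must separately estimate the contribution on the fold (where both $v$ and $ww'$ are of order $h$) and on the two flat strips (where $w\equiv0$ but $v$ is still of order $h$); this is where the seemingly artificial choice $\tilde v=\psi^2 v$ (instead of $\psi v$) pays off, because otherwise $2\tilde v_y+\tilde w_y^{\,2}-1$ would be of order $h/\delta$ rather than $O(1)$ and the third stretching integral would blow up. No other step requires any result beyond those already established in Section \ref{Bs1}.
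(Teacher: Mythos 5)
Your proof is correct and follows essentially the same route as the paper: bound $I_{\mathrm{S}}$, $I_{\mathrm{Be}}$, $I_{\mathrm{Bo}}$ separately on one period cell using the explicit form of $(\tilde u,\tilde v,\tilde w)$, the derivative bounds $|w^{(m)}|\le CA/\delta^m$ with $A\sim(h\delta)^{1/2}$, and $|v|\le Ch$, then multiply by the $l_2/(2h)$ copies; the paper writes the same estimates with the derivative bounds packaged as $|\partial_x^n\partial_y^m\tilde w|\le Ch^{1/2}\delta^{1/2-m}\varepsilon^{-n}$. One small slip: to absorb the first stretching contribution you invoke "$\delta\le h\le\varepsilon$", but $h\le\varepsilon$ is not a hypothesis of the lemma; the needed inequality $\delta^3h/\varepsilon^3\le h^2/\varepsilon$ already follows from $\delta\le\varepsilon$ and $\delta\le h$ alone (write $\delta^3/\varepsilon^2=\delta\,(\delta/\varepsilon)^2\le\delta\le h$), which is exactly what the paper uses.
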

\begin{proof}
Observe that $w$, defined in equation \eqref{B.2}, satisfies the following: % bounds for $y\in (-\delta,\delta)$:
\begin{equation*}
\label{B.9}
|\partial^m_y w| \le C h^{1/2} \delta^{1/2-m} \quad \textrm{for } m \ge 0.
%
%|w| \le C h^{1/2} \delta^{1/2}, \quad |w_y| \le C h^{1/2} \delta^{-1/2}, \quad |w_{yy}| \le C h^{1/2} \delta^{-3/2}.
\end{equation*}
Thus
%, for $x \in (0,\varepsilon)$, $y\in (-\delta,\delta)$,
$\tilde{w}$ satisfies
\beqn{B.10}
| \partial^n_x \partial^m_y \tilde{w} | \le C h^{1/2} \delta^{1/2-m} \varepsilon^{-n} \quad \textrm{for } m \ge 0, \textrm{ } n \ge 0.
% \begin{aligned}
% | \tilde{w}_x | & \le C \frac{h^{1/2} \delta^{1/2}}{\varepsilon}, \\
% | \tilde{w}_y | & \le C h^{1/2}\delta^{-1/2}, \\
% | \tilde{w}_{xx} | & \le C \frac{h^{1/2} \delta^{1/2}}{\varepsilon^2}, \\
% | \tilde{w}_{xy} | & \le C \frac{h^{1/2} \delta^{-1/2}}{\varepsilon}, \\
% | \tilde{w}_{yy} | & \le C h^{1/2} \delta^{-3/2}.
% \end{aligned}
\eeq
%(Note that by  $y\in (-\delta,\delta)$ we really mean
%$y\in (-\delta,\delta) + (2n-1)h$, $n \in \{1,2,\ldots,\tfrac{l_2}{2h} \}$.
%This abuse of notation will be continued throughout the section.)
The displacement $v$, defined in equation \eqref{B.3}, satisfies
$|v| \le C h$
%\beqn{B.11}
%|v| \le
%\left\{
%\begin{array}{cl}
%h & y \in (-\delta,\delta), \\
%h-\delta & y \in (-h,-\delta)\cup(\delta,h)%\textrm{otherwise},
%\end{array}
%\right.
%\eeq
and so
\beqn{B.12}
|\tilde{v}_x|  \le C \frac{h}{\varepsilon}.
%\left\{
%\begin{array}{cl}
%\smallskip
%\displaystyle \frac{h}{\varepsilon} & y \in (-\delta,\delta), \\
%\displaystyle \frac{h-\delta}{\varepsilon} &  y \in (-h,-\delta)\cup(\delta,h).
%\end{array}
%\right.
\eeq
Recall that $v$ was chosen so that $w_y^2 + 2v_y-1=0$. Therefore %for $x \in (0,\varepsilon)$, $y \in (-\delta,\delta)$ we have
\beqn{B.13}
|\tilde{w}_y^2+2\tilde{v}_y-1| = |\psi^2(w_y^2 + 2v_y-1) - (1-\psi^2)| =|1-\psi^2| \le 1.
\eeq
%For $x \in (0,\varepsilon)$, $y \in (-h,-\delta)\cup(\delta,h)$ we have
%\beqn{B.14}
%|\tilde{w}_y^2+2\tilde{v}_y-1| = |2\tilde{v}_y-1| \le 1.
%\eeq
From equations \eqref{B.10}--\eqref{B.13} we can estimate the stretching, bending, and bonding energy in the boundary layer
$(0,\varepsilon) \times (0,l_2)$:
%\beqn{B.15}
%\begin{aligned}
%I_{\textrm{S}} & \le l_2 \left[ \frac{h \delta^3}{\varepsilon^3} + \frac{h^2}{\varepsilon} + \varepsilon \right],\\
%I_{\textrm{Be}} & \le  \frac{l_2}{h} (\sigma l_1)^2  \varepsilon \delta \left[ \frac{h \delta}{\varepsilon^4} + \frac{h}{\delta \varepsilon^2} + \frac{h}{\delta^3} \right],  \\
%I_{\textrm{Bo}} & \le  \frac{l_2}{h} \g  \varepsilon \delta.
%\end{aligned}
%\eeq
\beqn{B.15}
I_{\textrm{S}} \le C l_2 \left[ \frac{h \delta^3}{\varepsilon^3} + \frac{h^2}{\varepsilon} + \varepsilon \right], \quad
I_{\textrm{Be}} \le  C \frac{l_2}{h} (\sigma l_1)^2  \varepsilon \delta \left[ \frac{h \delta}{\varepsilon^4} + \frac{h}{\delta \varepsilon^2} + \frac{h}{\delta^3} \right],  \quad
I_{\textrm{Bo}} \le  C \frac{l_2}{h} \g  \varepsilon \delta.
\eeq
%
% From \eqref{T.14} and \eqref{T.15} we obtain
% \beqn{T.16}
% \begin{aligned}
% \textrm{Stretching energy in BL} & \le
% l_2 \left[
% \frac{h \delta^3}{\e^3} + \frac{\e \delta}{h} + \frac{\delta h}{\e} + \frac{\e (h-\delta)}{h} + \frac{(h-\delta)^3}{\e h}
% \right], \\
% \textrm{Bending energy in BL} & \le
% l_2 \sigma^2 \e \delta \left[ \frac{\delta}{\e^4} + \frac{1}{\delta \e^2} + \frac{1}{\delta^3}
% \right],\\
% \textrm{Bonding energy in BL} & \le
% l_2 \g \frac{\e \delta}{h},
% \end{aligned}
% \eeq
% where we have omitted multiplicative constants.
%Equation \eqref{B.15} suggests that we should limit our attention to the case $\delta \le \e$.
%Doing this and using the bound $\delta < h$ allows us to reduce equation \eqref{B.15} to
By using the assumptions $\delta \le \e$ and $\delta \le h$, the bounds \eqref{B.15} reduce to \eqref{B.16}, as required.
%By seeking displacements with $\delta \le \e$ we obtain
\end{proof}
\subsection{Complete Construction}
\label{Bs3}
We are now in a position to prove the upper bound for the laminate regime $B$:
\begin{Proposition}[Energy of the laminate construction]
\label{Prop:B}
Let $0<l_1 \le l_2$, $\sigma \g \le 1$, $\g \ge 1$. Then
\begin{equation*}
\min_V I^{(\sigma,\gamma)} \le C l_1 l_2 (\sigma \g)^{2/5}.
\end{equation*}
\end{Proposition}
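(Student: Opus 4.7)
The plan is to glue the interior construction of Section~\ref{Bs1} to the boundary-layer construction of Section~\ref{Bs2} and then tune the three free parameters $\delta$, $h$, $\varepsilon$ to balance the resulting contributions. Specifically, I would put the boundary-layer displacement on the strip $(0,\varepsilon)\times(0,l_2)$ (tiled periodically in $y$ with period $2h$, as described after \eqref{B.8}), and the $x$-independent basic cell of Section~\ref{Bs1} on $(\varepsilon,l_1)\times(0,l_2)$, again tiled with period $2h$. Because $\psi(1)=1$, $\psi'(1)=0$, the traces of $(\tilde u,\tilde v,\tilde w)$ and their $x$-derivatives match those of the interior construction at $x=\varepsilon$ (choosing the free constant $d=0$ in \eqref{B.3}); the clamped condition \eqref{I.4} is satisfied at $x=0$ by the definition of $\psi$; and $w\ge 0$ throughout. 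Hence the glued displacement lies in $V$.

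Combining Lemmas~\ref{Energy:Bint} and \ref{Energy:BBL}, with $l=l_1-\varepsilon\le l_1$ and at most $l_2/(2h)$ tiles, and absorbing the two $\varepsilon$-dependent boundary-layer terms $(\sigma l_1)^2\varepsilon/\delta^2$ and $\gamma\varepsilon\delta/h$ into the corresponding interior contributions using $\varepsilon\le l_1$, yields
\begin{equation*}
I^{(\sigma,\gamma)} \;\le\; C\, l_1 l_2 \!\left[\frac{(\sigma l_1)^2}{\delta^2} + \frac{\gamma\delta}{h}\right] + C\, l_2\!\left[\frac{h^2}{\varepsilon}+\varepsilon\right].
\end{equation*}

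Next I would optimize. Take $\varepsilon=h$ to equate the two boundary-layer terms. Minimizing the interior bracket over $\delta$ gives $\delta\sim (\sigma l_1)^{2/3}\gamma^{-1/3}h^{1/3}$, so the bracket becomes $\sim (\sigma l_1)^{2/3}\gamma^{2/3}h^{-2/3}$, and the interior total is $\sim l_1 l_2 (\sigma l_1)^{2/3}\gamma^{2/3}h^{-2/3}$. Balancing this with the boundary-layer total $l_2 h$ gives $h=l_1(\sigma\gamma)^{2/5}$ (and correspondingly $\delta\sim l_1\sigma^{4/5}\gamma^{-1/5}$), and both terms equal $l_1 l_2 (\sigma\gamma)^{2/5}$, which is the asserted bound.

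Finally I would verify that this choice of parameters respects the hypotheses of Lemmas~\ref{Energy:Bint}--\ref{Energy:BBL} under $\gamma\ge 1$ and $\sigma\gamma\le 1$: the condition $\delta\le h$ reduces to $\gamma\ge\sigma^{2/3}$, which follows from $\gamma\ge 1$; $\varepsilon=h\le l_1$ reduces to $\sigma\gamma\le 1$, the standing hypothesis; and $h\le l_2$ follows from $l_1\le l_2$ together with $\sigma\gamma\le 1$. The main obstacle is purely bookkeeping—keeping track of which boundary-layer contributions are dominated by interior ones, and checking that the simultaneous three-parameter balance is consistent with the regime~$B$ hypotheses—rather than any genuine analytic difficulty, since Lemmas~\ref{Energy:Bint} and \ref{Energy:BBL} already do the heavy lifting.
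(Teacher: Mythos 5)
Your proof is correct and follows essentially the same route as the paper: glue the interior cell of Section~\ref{Bs1} to the boundary-layer construction of Section~\ref{Bs2}, combine Lemmas~\ref{Energy:Bint} and~\ref{Energy:BBL} into the bound $C\,l_1l_2[(\sigma l_1)^2/\delta^2+\gamma\delta/h]+C\,l_2[h^2/\varepsilon+\varepsilon]$, then balance by taking $\varepsilon=h=l_1(\sigma\gamma)^{2/5}$ and $\delta=l_1\sigma^{4/5}\gamma^{-1/5}$, checking $\delta\le h\le l_1\le l_2$ against the regime hypotheses. The only cosmetic discrepancy is the free constant (the paper sets $d=\varepsilon/2$ because it translates the interior formula to $(\varepsilon,l_1)$, while your $d=0$ works if the formula is read directly on $(\varepsilon,l_1)$), which does not affect the energy.
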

\begin{proof}
Take the displacement field $(u,v,w)$ that was defined in \eqref{B.2}--\eqref{B.4} on $(0,l)\times(-h,h)$ and extend it to the domain $(\varepsilon,l_1) \times (0,l_2)$
by taking $l=l_1-\varepsilon$, $d=\e /2$, and gluing together $l_2/2h$ copies along their horizontal boundaries (assuming without loss of generality that $2h$ divides $l_2$ as above).
Lemma \ref{Energy:Bint}
implies that this construction has energy
\beqn{B.6}
I^{(\sigma,\g)} \le C \frac{(l_1-\e) l_2}{h} \left[ (\sigma l_1)^2 \frac{h}{\delta^2} + \g \delta \right].
\eeq
Define $(u,v,w)$ on $(0,\varepsilon) \times (0,l_2)$ using the boundary layer construction from Section \ref{Bs2}. By combining \eqref{B.16} and \eqref{B.6}
we find that the energy on the whole domain $(0,l_1) \times (0,l_2)$ satisfies
\beqn{B.17}
\begin{aligned}
I^{(\sigma,\g)} & \le C l_2 \left[
\frac{h^2}{\e} + \e + (\sigma l_1)^2 \frac{\e}{\delta^2} + \g \frac{\e \delta}{h} + (\sigma l_1)^2 \frac{(l_1-\e)}{\delta^2} +
\g \frac{(l_1-\e) \delta}{h}
\right] \\
& =C  l_2
\left[ \frac{h^2}{\e} + \e + (\sigma l_1)^2 \frac{l_1}{\delta^2} + \g \frac{l_1 \delta}{h} \right].
\end{aligned}
\eeq
Now we chose $\e$, $h$, and $\delta$ to minimise the order of the energy. This can be done by equating
terms on the right-hand side of \eqref{B.17}, which yields
\beqn{B.18}
h = l_1 (\sigma \g)^{2/5}, \quad \e = h, \quad \delta = (\sigma l_1)^{2/3} \g^{-1/3} h^{1/3} = l_1 \sigma^{4/5} \g^{-1/5}.
\eeq
Note that the conditions $l_1 \le l_2$, $\sigma \g \le 1$, $\g \ge 1$
ensure that the geometrical restrictions $\delta \le h$, $h \le l_2$, $\e \le l_1$ and the constraint $\delta \le \e$ are satisfied.
Substituting \eqref{B.18} into \eqref{B.17} gives
\begin{equation*}
%\label{B.19}
I^{(\sigma,\g)} \le  l_1 l_2 (\sigma \g)^{2/5}
\end{equation*}
as required.
%Observe that $l_2/2h$ may not be an integer, as we assumed for the construction above. In this case, however, the same upper bound \eqref{B.19} can be %obtained by using the same construction as above on the subdomain $(0,l_1) \times (0,2h \lfloor l_2/2h \rfloor)$,
%where $\lfloor l_2/2h \rfloor$ is the integer part of $l_2/2h$, and by defining $u = x/2$, $v = w = 0$ elsewhere.
%$\Box$
\end{proof}
%
%
%
%
%
%----------------------------------------------------------------------------------------------------------------------
%
%
%
%
%
\section{Upper Bound Constructions for the Branching Regimes $C$ and $D$}
\label{E}
In this section we prove Theorem \ref{UB1} for the branching regimes $C$ and $D$. The basic idea is to use copies of a folding pattern of the form shown in Figure \ref{OneFold} and to decrease $h$ and $\delta$ towards the clamped boundary by a sequence of fold branchings.
\subsection{Branching constructions.}
\label{BranchingConstructions}
First we prove a general lemma about the cost of fold branching. This covers both our fold splitting construction (Lemma \ref{LemmaFSS}) and our fold shrinkage construction (Lemma \ref{Lemm:FS}).
\begin{Lemma}[The cost of branching]
\label{LemmaUB}
  Let $\Omega=(0,l)\times(-h,h)$, $h \le l$. Let $0 < \delta \le h$ and let $w\in C^4(\Omega;[0,\infty))$ satisfy the
  following:
  \begin{itemize}
  \item[(i)] $w(x,y)=0$ for $y$ in a neighbourhood of $\pm h$; $w_x(x,y)=0$
    for $x$ in a neighbourhood of  $0$, $l$;
  \item[(ii)] \label{lemmaubabspart} $w(x,-y)=w(x,y)$;
  \item[(iii)] \label{lemmaubabsavg} For all $x \in (0,l)$
  \begin{equation*}
  \int_{0}^h (1- w_y^2) \, dy = 0;
  \end{equation*}
\item[(iv)] \label{lemmaubabsder} For all $a,b \in \{0,1,2\}$,
  \begin{equation}
    |\partial^a_x \partial^b_y w|\le  c\left(\frac{h}{\delta l}\right)^a
\left(\frac{1}{\delta}\right)^b (h\delta)^{1/2}.
  \end{equation}
  \end{itemize}
  Define
\begin{equation}
\label{uv}
   v(x,y):=\frac12 \int_{-h}^y (1-w_y^2) \, d \tilde{y}, \qquad u(x,y):= \frac{1}{2} x - \int_{-h}^y (w_x w_y + v_x) \, d \tilde{y}.
\end{equation}
%{\color{red}[I added the $\frac 12 x$ to the definition of $u$ so that our use of the symbol $u$ is consistent throughout the paper.]}
Then $v=0$ for $y=\pm h$ and $u=\frac 12 x$ on $\partial \Omega$, and $v(x,-y)=-v(x,y)$.

Assume additionally either
\begin{itemize}
  \item[(v)]\label{lemmaubabsint1} $w(x,y)=0$ whenever $|y|> c \delta$
\end{itemize}
or
\begin{itemize}
  \item[(vi)]\label{lemmaubabsint2} $\delta \le h/2$ and
  $w(x,y) = A(x) [\psi_\delta (y + \varphi(x)) + \psi_\delta (y - \varphi(x))]$,
  where $\psi_\delta$ is an even function supported on the interval $[-\delta,\delta]$ satisfying $|\psi_\delta^{(k)}| \le C / \delta^{k}$
   for $k\in\{0,1,2\}$,   and where $\varphi:[0,l] \to [0,h/2]$
and
$|\varphi^{(k)}|\le C h/l^k$ for $k\in\{0,1,2\}$.
\end{itemize}
Then
\begin{equation}
\label{eq:EbB}
  I^{(\sigma,\gamma)}[\bu,w,\Omega]\le  c \gamma \delta l + c (\sigma l_1)^2 \frac{l h }{\delta^2}
  + c \frac{h^6}{\delta l^3}\,.
\end{equation}
%{\color{red}[The factor $l^2$ was missing from the second term on the right-hand side in ub.pdf.]}
\end{Lemma}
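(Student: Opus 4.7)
First I would verify that the pair $(u,v)$ constructed in \eqref{uv} has the claimed boundary values and kills two of the three summands in the stretching energy \eqref{eqiscomponents}. Differentiation of \eqref{uv} yields $v_y=\tfrac12(1-w_y^2)$ and $u_y=-(w_xw_y+v_x)$, so the second and third terms of \eqref{eqiscomponents} vanish identically. By (ii), $w$ is even in $y$, hence $1-w_y^2$ is even; combined with hypothesis (iii) this gives $\int_{-h}^h(1-w_y^2)\,dy=0$, whence $v(x,\pm h)=0$ and $v$ is odd in $y$. Consequently $w_xw_y+v_x$ is odd in $y$, which via a substitution in the integral defining $u$ shows that $u$ is even in $y$ and $u(x,\pm h)=x/2$.

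It remains to bound $\int_\Omega f^2\,d\bx$ for $f:=2u_x+w_x^2-1$, which by the previous step satisfies $f(\cdot,\pm h)=0$ and is even in $y$. The key analytic fact is the Saint-Venant--type identity
\begin{equation*}
f_{yy}=2\bigl(w_{xy}^2-w_{xx}w_{yy}\bigr),
\end{equation*}
obtained by differentiating the relations $u_y+v_x+w_xw_y=0$ and $2v_y+w_y^2-1=0$ and combining. In particular $f_{yy}=0$ on every connected component of $\{w=0\}$, so $f$ is affine in $y$ there; by evenness and $f(\pm h)=0$ it must vanish identically on the two outermost components.

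I would then close the argument case by case. In case (v), $w$ is supported in the single strip $\{|y|\le c\delta\}$, and the oddness in $y$ of $w_{xx}w_y+v_{xx}$ forces $f(\pm c\delta)=0$, so $f\equiv 0$ off the strip; the derivative bounds (iv) give $|f|\le Ch^3/(l^2\delta)$ inside the strip, and integration yields $\int_\Omega f^2\le C\delta l\cdot h^6/(l^4\delta^2)=Ch^6/(l^3\delta)$. In case (vi), $f\equiv 0$ for $|y|>\varphi+\delta$ by the same linearity/evenness argument, while between the two strips $f$ is a constant $C_0(x)$. To control $C_0$ I would exploit hypothesis (iii): substituting the ansatz $w=A(x)[\psi_\delta(y+\varphi)+\psi_\delta(y-\varphi)]$ into (iii) forces $A^2\int_{-\delta}^\delta(\psi_\delta')^2\,ds=h$, so $A$ is in fact constant in $x$. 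A short direct computation in the lower strip then gives $C_0(x)=-h\varphi''(x)$, and the mid-strip contribution to $\int f^2$ is bounded by $2h^3\int_0^l(\varphi'')^2\,dx\le Ch^5/l^3\le Ch^6/(l^3\delta)$, using $|\varphi''|\le Ch/l^2$ and $\delta\le h/2$; the in-strip contributions are treated as in case (v). The bending estimate follows from (iv) (with $|w_{yy}|$ dominating, yielding $|D^2w|^2\le Ch/\delta^3$) together with the fact that $w$ is supported on a set of area $\lesssim\delta l$; the bonding estimate $\gamma|\{w>0\}|\le C\gamma\delta l$ is immediate.

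The main obstacle is the stretching estimate in case (vi). A naive pointwise bound $|C_0|\lesssim h^3/(l^2\delta)$ integrated over the mid-strip region of area $\sim lh$ overshoots the target by the factor $h/\delta\ge 1$. The fix is the rigidity that hypothesis (iii) imposes on $A$: its constancy kills the $A'$ and $A''$ contributions to $w_x$ and $w_{xx}$, leaving a single $\varphi''$ term that produces the clean identity $C_0=-h\varphi''$ and hence the sharp $L^2$ bound.
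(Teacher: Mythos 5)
Your proposal is correct and takes a genuinely different route from the paper. Where the paper works directly with the explicit formulas \eqref{ux} and \eqref{vxx2} for $u_x$ and $v_{xx}$ and estimates termwise, you derive the compatibility identity $f_{yy}=2(w_{xy}^2-w_{xx}w_{yy})=2\det D^2w$ for $f=2u_x+w_x^2-1$ and exploit the resulting rigidity: $f$ is affine in $y$ on $\{w=0\}$, hence (by evenness) constant in the middle region, and its variation across each fold strip is $O(\delta^2\sup|f_{yy}|)=O(h^3/\delta l^2)$. This surfaces the von K\'arm\'an/Gauss-curvature constraint that is implicit in the paper's termwise cancellations, and is more structural; both routes give the same bound. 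Two imprecisions should be tightened. First, affineness plus evenness plus $f(\pm h)=0$ alone do not force $f\equiv0$ on the outer components (an affine $a+by$ with $a+bh=0$ has a free parameter); you do supply the needed extra fact $f(\pm c\delta)=0$ via the oddness of the integrand $w_{xx}w_y+w_xw_{xy}+v_{xx}$ (note the $w_xw_{xy}$ term you omitted, also odd), but it should be flagged as essential, and the analogous $f(\pm(\varphi+\delta))=0$ used in case (vi). Second, hypothesis (iii) does not make $A$ constant on all of $[0,l]$: when $\varphi(x)<\delta$ the two bumps overlap and the normalisation $A^2(x)\int_0^h\tilde w_y^2\,dy=h$ picks up cross terms, so $A$ varies. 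However, $A$ is locally constant precisely on $\{\varphi>\delta\}$, which is exactly where $\Omega_1$ is non-degenerate, so your identity $C_0=-h\varphi''$ is still valid there provided you restrict the constancy claim to that set.
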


\begin{remark}[Assumptions of Lemma \ref{LemmaUB}]
\label{Remark:assump}
Recall that the stretching energy of the film  was given in (\ref{eqiscomponents}).
The definition of $u$ in \eqref{uv} ensures that the second term of the stretching energy vanishes, and the definition of $v$ ensures that the third term vanishes.
%\item Assumption (vi) is consistent with (and implies) assumption (iv).
  \end{remark}

\begin{proof}
First we prove the boundary conditions on $v$. The condition $v(x,-h)=0$ is clear from the definition of $v$.
 Assumptions (ii) and (iii) imply
\begin{equation}\label{eq:(iii)'}
\int_{-h}^h (1- w_y^2) \, dy = 0
\end{equation}
hence$ v(x,h)=0$. Further, by (ii) we get $v(x,0)=0$.
Differentiating (\ref{eq:(iii)'}) with respect to $x$ gives
\begin{equation}
\label{eq:D(iii)}
\int_{-h}^h w_y w_{xy} \, dy = 0. %, \qquad \int_{-h}^h w_{xy}^2 + w_y w_{xxy} \, dy = 0.
\end{equation}
Next we show that $v$ is odd in $y$: By assumption (ii) and equation \eqref{eq:(iii)'}
\begin{align*}
v(x,-y) & = \frac12 \int_{-h}^{-y} 1 - w_y^2(x,\tilde{y}) \, d \tilde{y} =
\frac12 \int_{y}^{h} 1 - w_y^2(x,\tilde{y}) \, d \tilde{y}
\\
& = \frac12 \int_{-h}^{h} 1 - w_y^2(x,\tilde{y}) \, d \tilde{y}  - \frac12 \int_{-h}^{y} 1 - w_y^2(x,\tilde{y}) \, d \tilde{y}
\\
& = - v(x,y)
\end{align*}
as required.

  Now we turn to the boundary conditions on $u$. Clearly $u = \frac 12 x$ for $y=-h$ by the definition of $u$. Note that, by (ii), $w_x w_y$ is an odd function of $y$. Therefore
 \begin{equation}
 \label{eq:wxwy}
 \int_{-h}^h w_x w_y \, d y=0.
 \end{equation}
%From the definition of $v$
%\begin{equation*}
%\int_{-h}^h v_x \, d y  =
%- \int_{-h}^h \int_{-h}^y w_y(x,\tilde{y}) w_{xy}(x,\tilde{y}) \, d \tilde{y}  \, dy
% = - \int_{-h}^h \int_{\tilde{y}}^h w_y(x,\tilde{y}) w_{xy}(x,\tilde{y}) \, d y \, d\tilde{y}
%\end{equation*}
%by changing the order of integration. Therefore
%\begin{equation}
%\label{eq:eq}
%\int_{-h}^h v_x \, d y = - \int_{-h}^h (h-\tilde{y}) w_y(x,\tilde{y}) w_{xy}(x,\tilde{y}) \, d\tilde{y}
%= - \int_{-h}^h h w_y(x,\tilde{y}) w_{xy}(x,\tilde{y}) \, d\tilde{y}
%\end{equation}
%since $y w_y w_{xy}$ is an odd function of $y$ by (ii). From equations \eqref{eq:eq} and \eqref{eq:D(iii)} we conclude that
Since $v$ is an odd function of $y$, so is $v_x$, and
\begin{equation}
\label{vx}
\int_{-h}^h v_x \, d y = 0.
\end{equation}
For future use we differentiate this equation with respect to $x$ and record that
\begin{equation}
\label{vxx}
\int_{-h}^h v_{xx} \, d y = 0.
\end{equation}
By equations \eqref{uv}$_2$, \eqref{eq:wxwy}, \eqref{vx}
\begin{equation*}
u(x,h) = \frac{1}{2} x - \int_{-h}^h (w_x w_y + v_x) \, d \tilde{y} = \frac{1}{2} x.
\end{equation*}
It remains to show that $u = \frac 12 x$ for $x=0,l$. This follows immediately from the definition of $u$ and $v$ and
the fact that $w_x$ is zero close to $x=0,l$ (assumption (i)).

Now we compute the energy bound \eqref{eq:EbB}. It is easy to see that the bonding energy
$I_{\textrm{Bo}} := \gamma | \{ (x,y) \in \Omega : w(x,y) > 0 \} |$
satisfies $I_{\textrm{Bo}} \le c \gamma \delta l$ in both cases (v) and (vi), which gives the
first term in equation \eqref{eq:EbB}.

To estimate the bending energy we observe that by (iv) and the assumption $h \le l$
  \begin{equation*}
    |D^2w|^2 = w_{xx}^2 + 2 w_{xy}^2 + w_{yy}^2 \le c h \delta \left( \left( \frac{h}{\delta l} \right)^4
    + 2 \left( \frac{h}{\delta l} \right)^2 \left( \frac{1}{\delta} \right)^2 + \left( \frac{1}{\delta} \right)^4 \right)
    \le c \frac{h \delta}{\delta^4} = c \frac{h}{\delta^3}.
  \end{equation*}
  Therefore in both cases (v) and (vi) the bending energy satisfies
  \begin{equation}
  (\sigma l_1)^2 \int_\Omega |D^2w|^2 \, d \bx \le c (\sigma l_1)^2 \delta l \frac{h}{\delta^3} = c (\sigma l_1)^2 \frac {lh}{\delta^2},
\end{equation}
which gives the second term in equation \eqref{eq:EbB}.

Now we come to the stretching energy. By the definition of $u$ and $v$
we only need to estimate
\begin{equation*}
I_{\textrm{S}} = \int_\Omega (2 u_x + w_x^2 - 1)^2 \, d \bx
\end{equation*}
(see Remark \ref{Remark:assump}).
From the definition of $u$, equation  \eqref{vxx}, and the fact that $w_y w_{xx}$ is an odd function of $y$ (assumption (ii)), we compute
  \begin{equation}
  \label{ux}
    u_x = \frac12 - \int_{-h}^y (w_{xy} w_x + w_y w_{xx} + v_{xx})  \, d \tilde{y}
    = \frac12 + \int_{y}^h (w_{xy} w_x + w_y w_{xx} + v_{xx} ) \, d \tilde{y}.
  \end{equation}
From the definition of $v$ and equation \eqref{eq:D(iii)} we see that
\begin{equation}
\label{v_x}
    v_x = - \int_{-h}^y w_y w_{xy} \, d \tilde{y} = \int_y^h w_y w_{xy} \, d \tilde{y}
  \end{equation}
  and therefore
    \begin{equation}
    \label{vxx2}
    v_{xx} = - \int_{-h}^y w_{xy}^2 + w_y w_{xxy} \, d \tilde{y} = \int_y^h w_{xy}^2 + w_y w_{xxy} \, d \tilde{y}.
  \end{equation}
By assumption (iv)
  \begin{equation}
  \label{wxywx+wywxx}
   | w_{xy} w_x + w_y w_{xx}|
    \le \left( \frac{h}{\delta l} \right) \left( \frac{1}{\delta} \right) (h \delta)^{1/2} \left( \frac{h}{\delta l} \right) (h \delta)^{1/2}
    + \left( \frac{1}{\delta} \right) (h \delta)^{1/2} \left( \frac{h}{\delta l} \right)^2 (h \delta)^{1/2}
    \le c \frac{h^3}{\delta^2l^2}.
  \end{equation}

First we consider case (v).
Since $w=0$  for $|y|\ge\delta$, then from
equations \eqref{ux} and \eqref{vxx2} we see that $u_x = \frac 12$ for $|y|\ge\delta$.
Therefore the stretching energy
reduces to
\begin{equation}
\label{ISreduced}
I_{\textrm{S}} = \int_{0}^{l} \int_{-\delta}^\delta (2 u_x + w_x^2 - 1)^2 \, d y dx.
\end{equation}
For $|y|<\delta$ equations \eqref{ux}, \eqref{vxx2} reduce to
\begin{gather}
\label{ux2}
u_x = \frac12 - \int_{-\delta}^y w_{xy} w_x + w_y w_{xx} + v_{xx}  \, d \tilde{y}
    = \frac12 + \int_{y}^\delta w_{xy} w_x + w_y w_{xx} + v_{xx}  \, d \tilde{y},
    \\
    \label{vxx2.5}
    v_{xx} = - \int_{-\delta}^y w_{xy}^2 + w_y w_{xxy} \, d \tilde{y} = \int_y^\delta w_{xy}^2 + w_y w_{xxy} \, d \tilde{y}.
\end{gather}
By equation \eqref{vxx2.5} and assumption (iv)
\begin{equation}
\label{vxx3}
    |v_{xx}|\le c \delta \left( \left( \frac{h}{\delta l} \right)^2 \left( \frac{1}{\delta} \right)^2 h \delta
    + \frac{(h \delta)^{1/2}}{\delta} \left( \left( \frac{h}{\delta l} \right)^2 \left( \frac{1}{\delta} \right) (h \delta)^{1/2} \right) \right) \le c \frac{h^{3}}{\delta^{2} l^2}.
  \end{equation}
Using \eqref{ux2}, \eqref{wxywx+wywxx}, \eqref{vxx3} we estimate
\begin{equation}
\label{2ux-1}
|2 u_x - 1|\le c \delta \left( \frac{h^3}{\delta^2l^2} + \frac{h^{3}}{\delta^{2} l^2} \right) \le c\frac{h^3}{\delta l^2}.
\end{equation}
We conclude from equation \eqref{ISreduced}, \eqref{2ux-1} and assumption (iv) that
\begin{equation}
\label{IS(v)}
I_{\textrm{S}} \le 2\int_{0}^{l} \int_{-\delta}^\delta |2 u_x - 1|^2+|w_x|^4 \, dy dx \le c l \delta\frac{h^6}{\delta^2l^4}
= c\frac{h^6}{\delta l^3}.
\end{equation}
This gives the third term in equation \eqref{eq:EbB} and completes the proof for case (v).

We finish by computing the stretching energy for case (vi).
We decompose $\Omega$ as
$\Omega = \Omega_1 \cup \Omega_2 \cup \Omega_3$
where
\begin{align*}
\Omega_1 & := \{ (x,y) \in \Omega : \delta-\varphi(x) < y < - \delta + \varphi(x) \},
\\
\Omega_2 & := \{ (x,y) \in \Omega : -\delta + \varphi(x) \le y \le \delta + \varphi(x) \} \cup \{ (x,y) \in \Omega : -\delta - \varphi(x) \le y \le \delta - \varphi(x) \},
\\
\Omega_3 & := \{ (x,y) \in \Omega : y > \delta + \varphi(x) \} \cup \{ (x,y) \in \Omega : y < -\delta - \varphi(x) \}.
\end{align*}
Observe that $w=0$ in $\Omega_1$ and $\Omega_3$. $\Omega_1$ is the central region between the two folds, which occupy region $\Omega_2$. The same arguments used in the proof of case (v) show that the stretching energy in region $\Omega_3$ is zero and that the stretching energy in region $\Omega_2$ satisfies estimate \eqref{IS(v)}. (The integral over  $(-\delta,\delta)$ in \eqref{IS(v)} is replaced with integrals over $(-\delta-\varphi(x),\delta-\varphi(x))$ and $(-\delta+\varphi(x),\delta+\varphi(x))$.)
It remains to compute the stretching energy in the central region $\Omega_1$. Since $w=0$ in this region we only need to estimate
\begin{equation*}
\int_{\Omega_1} |2 u_x - 1|^2 \, dx dy.
\end{equation*}
First we show that $v_x=0$ in $\Omega_1$.
We only need to consider values of $x$ such that $\varphi(x)>\delta$.
Since $w=0$ in $\Omega_1$ and $\Omega_3$, equation \eqref{v_x} reduces to the following in $\Omega_1$:
\begin{equation}
\label{v_x2}
    v_x = - \int_{-\varphi(x)-\delta}^{-\varphi(x)+\delta} w_y w_{xy} \, d\tilde{y} = \int_{\varphi(x)-\delta}^{\varphi(x)+\delta} w_y w_{xy} \, d\tilde{y}
    \qquad \textrm{in } \Omega_1.
  \end{equation}
  But $w_y w_{xy}$ is even in $y$. Therefore
\begin{equation}
\label{v_x3}
\int_{\varphi(x)-\delta}^{\varphi(x)+\delta} w_y w_{xy} \, d\tilde{y}
=
\int_{-\varphi(x)-\delta}^{-\varphi(x)+\delta} w_y w_{xy} \, d\tilde{y}.
\end{equation}
Combining \eqref{v_x2} and \eqref{v_x3} proves that $v_x=0$ in $\Omega_1$, as claimed. Recalling (\ref{uv}), it follows that
\begin{equation*}
u_y = - w_x w_y - v_x = 0 \qquad \textrm{in } \Omega_1.
\end{equation*}
Therefore $u(x,y)=u(x,0)$ for all $(x,y) \in \Omega_1$. Using assumption (vi) we compute, for $y<0$,
\begin{align*}
w_x & =
 A'(x)\psi_\delta(y + \varphi(x))  + A(x) \varphi'(x)\psi'_\delta(y + \varphi(x)),
\\
w_y & = A(x) \psi'_\delta(y + \varphi(x))
\end{align*}
(since $(x,y)\in \Omega_1$, we have $\varphi(x)\ge \delta$).
It follows that, again for $y<0$,
\begin{equation*}
w_x w_y = A(x) A'(x) \frac12 \frac{d}{dy}  \psi_\delta^2(y + \varphi(x)) +  w_y^2(x,y) \varphi'(x).
\end{equation*}
Integrating gives
\begin{align*}
 \int_{-h}^0 w_x w_y \, dy =
& = \frac12 A(x) A'(x)\{\psi_\delta^2(\varphi(x))
-\psi_\delta^2(-h + \varphi(x))
\}
 + \varphi'(x) \int_{-h}^0  w_y^2 \, dy
 \\
 & = \frac12 A(x) A'(x) \psi_\delta^2(\varphi(x)) + \varphi'(x) h
\end{align*}
by assumption (iii) and since %$\psi_\delta$ is even,
$\varphi(x) \in [0,h/2]$, $\delta < h/2$, and $\psi_\delta$ is supported on $[-\delta,\delta]$.
Therefore
\begin{equation*}
2u(x,0)-x  = - 2 \int_{-h}^0 (w_x w_y + v_x) \, dy
 = - A(x) A'(x) \psi_\delta^2(\varphi(x)) -2 h \varphi'(x) - 2 \int_{-h}^0 v_x \, dy.
\end{equation*}
If $(x,0) \in \Omega_1$, then $\varphi(x) > \delta$ and so $\psi_\delta(\varphi(x))=0$. Hence
\begin{equation*}
2 u(x,y)-x = 2u(x,0)-x = -2 h \varphi'(x) - 2 \frac{\partial}{\partial x} \int_{-h}^0 v \, dy \quad \textrm{ for all } (x,y) \in \Omega_1.
\end{equation*}
We compute the integral on the right-hand side:
\begin{equation*}
- 2 \frac{\partial}{\partial x}  \int_{-h}^0 v \, d y  =
\frac{\partial}{\partial x}  \int_{-h}^0 \int_{-h}^y w_y^2(x,\tilde{y})  \, d \tilde{y}  \, dy
 = \frac{\partial}{\partial x} \int_{-h}^0 \int_{\tilde{y}}^0 w_y^2(x,\tilde{y}) \, d y \, d\tilde{y}
\end{equation*}
by changing the order of integration. Therefore
\begin{equation*}
- 2 \frac{\partial}{\partial x}  \int_{-h}^0 v \, d y  =
- \frac{\partial}{\partial x} \int_{-h}^0 \tilde{y} w_y^2(x,\tilde{y}) \, d\tilde{y}
= \frac{\partial}{\partial x} \int_{0}^h y w_y^2(x,y) \, dy.
\end{equation*}
If $(x,0) \in \Omega_1$, then
\begin{align*}
- 2 \frac{\partial}{\partial x}  \int_{-h}^0 v \, d y
& = \frac{\partial}{\partial x} \int_{\varphi(x)-\delta}^{\varphi(x)+\delta} y w_y^2(x,y) \, dy
\\
& = \frac{\partial}{\partial x} \int_{\varphi(x)-\delta}^{\varphi(x)+\delta} (y-\varphi(x)) w_y^2(x,y) \, dy + h \varphi'(x) \qquad \textrm{(by assumption (iii))}
\\
& = \frac{\partial}{\partial x} \int_{\varphi(x)-\delta}^{\varphi(x)+\delta} (y-\varphi(x)) A^2(x) [\psi'_\delta(y - \varphi(x))]^2 \, dy  + h\varphi'(x)
\\
& = \frac{\partial}{\partial x} \int_{-\delta}^{\delta} y A^2(x) [\psi'_\delta(y)]^2 \, dy  + h \varphi'(x)
\qquad \textrm{(by changing variables)}
\\
& = 2 A(x) A'(x) \int_{-\delta}^{\delta} y [\psi'_\delta(y)]^2 \, dy + h \varphi'(x)
\\
& = h \varphi'(x)
\end{align*}
since $y [\psi'_\delta(y)]^2$ is odd. Therefore, if $(x,y) \in \Omega_1$,
\begin{equation*}
2 u(x,y)-x = - h \varphi'(x).
\end{equation*}
We conclude that
\begin{equation*}
\int_{\Omega_1} |2 u_x - 1|^2 \, dx dy = \int_{\Omega_1} h^2 |\varphi''(x)|^2 \, dx dy
\le C h l \, h^2 (h/l^2)^2 = C \frac{h^5}{l^3} \le C \frac{h^6}{\delta l^3}
\end{equation*}
since $\delta < h$.
Therefore the stretching energy in region $\Omega_1$ is the same order (or less) than the stretching energy in region $\Omega_2$ and we have finally arrived at the desired estimate \eqref{eq:EbB}.
\end{proof}

%
%
%------------------------------------------------
%
%\section{Fold branching.}
In the follow lemma we construct a deformation that takes one fold of width $2\delta$ at $x=l$ and splits it into two folds of width $2\delta$ separated by a distance of $h-2 \delta$ at $x=0$. The film is bonded to the substrate between the two folds. See Figure \ref{RegimeE}, left.
\begin{Lemma}[Fold splitting construction]
\label{LemmaFSS}
Let $0 < \delta \le h/2$.
Let $\psi\in C^\infty_c((-1,1);[0,1])$ be an even function with $\psi(0)=1$. Define $\psi_\delta(y)=\psi(y/\delta)$.
Let $\varphi\in C^\infty([0,l];[0,h/2])$ satisfy $\varphi(0)=h/2$, $\varphi(l)=0$,
$\varphi'=\varphi''=0$ in a neighbourhood of $x=0,l$,
% {\color{red} [instead of ``$\varphi'=\varphi''=0$ at $x=0,l$'', as is stated in ub.pdf,
%since otherwise assumption (i) of Lemma \ref{LemmaUB} is not satisfied]},
and
$|\varphi^{(k)}|\le C h/l^k$ for $k\in\{0,1,2\}$.
Set
\begin{equation*}
\tilde{w}(x,y):=\psi_\delta(y+\varphi(x))+\psi_\delta(y-\varphi(x)).
\end{equation*}
%The function $\psi_\delta$ plays the role of $\hat{w}$ in Lemma \ref{LemmaUB}(vi).
Define $A:[0,l]\to(0,\infty)$ by
\begin{equation}\label{eqdefA}
A^2(x)  \int_{0}^h \tilde{w}_y^2(x,y) \, dy =h.
\end{equation}
Then $w:[0,l]\times[-h,h] \to [0,\infty)$ defined by
\begin{equation}
w(x,y):=A(x)\tilde{w}(x,y)
\end{equation}
satisfies the assumptions of Lemma \ref{LemmaUB}.
\end{Lemma}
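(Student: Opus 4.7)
The plan is to verify in turn the hypotheses (i)--(iv) and (vi) of Lemma \ref{LemmaUB} for the function $w$ defined in the statement. I would first dispose of the three easy assumptions. Hypothesis (vi) is built into the construction: $w$ has exactly the prescribed form, $|\psi_\delta^{(k)}| \le C\delta^{-k}$ follows from the scaling $\psi_\delta(y) = \psi(y/\delta)$, and $\varphi$ comes with the requisite bounds. Symmetry (ii) is immediate from the evenness of $\psi_\delta$: $\tilde w(x,-y) = \psi_\delta(-y+\varphi) + \psi_\delta(-y-\varphi) = \tilde w(x,y)$, and this transfers to $w$ since $A$ depends on $x$ only. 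Condition (iii) is exactly the defining equation \eqref{eqdefA}: $\int_0^h w_y^2\,dy = A^2 \int_0^h \tilde w_y^2\,dy = h$.

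Next I would verify the boundary conditions (i). The $y$-support of $\tilde w(x,\cdot)$ is contained in $[-\varphi(x)-\delta,\varphi(x)+\delta] \subset [-h/2-\delta, h/2+\delta] \subset [-h,h]$ by $\delta \le h/2$. Since $\psi$ has compact support strictly inside $(-1,1)$, the support of $w$ is in fact a closed subset of $(-h,h)$, yielding a neighbourhood of $\pm h$ where $w$ vanishes. For $w_x \equiv 0$ near $x = 0, l$, observe that $\varphi' \equiv 0$ in such neighbourhoods, so $\tilde w_x = \varphi'[\psi_\delta'(y+\varphi) - \psi_\delta'(y-\varphi)] = 0$ there; moreover $F(x) := \int_0^h \tilde w_y^2\,dy$ depends on $x$ only through $\varphi(x)$, hence is locally constant, and so is $A = \sqrt{h/F}$. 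Therefore $w_x = A'\tilde w + A\tilde w_x = 0$ near the vertical edges.

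The substantive step is the derivative bound (iv), and the main tool is the representation $F(x) = G(\varphi(x))$ with $G(s) = \int_0^h [\psi_\delta'(y+s) + \psi_\delta'(y-s)]^2\,dy$. A change of variables rewrites this as $G(s) = \delta^{-1}\tilde G(s/\delta)$ for a smooth template $\tilde G$ depending only on $\psi$. For $t := s/\delta \ge 1$ the two derivative bumps have disjoint supports and $\tilde G(t)$ equals the positive constant $\int_{-1}^{1}(\psi')^2$; on the compact overlap interval $t \in [0,1]$ I would verify $\tilde G > 0$ by a direct sign argument, most cleanly by choosing $\psi$ strictly decreasing on $[0,1]$, in which case $\psi'(\tilde y+t)$ and $\psi'(\tilde y-t)$ have the same sign for $\tilde y > t$ and cannot cancel. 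This yields $\delta F \in [c_1,c_2]$ with constants depending only on $\psi$. Differentiation under the integral then gives $|G^{(j)}(s)| \le C\delta^{-1-j}$ for $j = 0,1,2$, and combining with $|\varphi^{(k)}| \le Ch/l^k$ via the chain rule I obtain $|F'| \le Ch/(l\delta^2)$ and $|F''| \le Ch^2/(l^2\delta^3)$.

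From $A = \sqrt{h/F}$ and these bounds, another chain-rule computation gives $|A| \le C\sqrt{h\delta}$, $|A'| \le Ch^{3/2}/(l\sqrt\delta)$, $|A''| \le Ch^{5/2}/(l^2\delta^{3/2})$. Direct differentiation shows $|\partial_x^a \partial_y^b \tilde w| \le C(h/l)^a \delta^{-(a+b)}$ for $a,b \in \{0,1,2\}$, with the leading contribution coming from all $x$-derivatives landing on the $\varphi$ arguments of $\psi_\delta$. The final step is to apply the Leibniz rule to $w = A\tilde w$ and collect terms: each contribution to $|\partial_x^a \partial_y^b w|$ is bounded by $c(h/(l\delta))^a \delta^{-b}(h\delta)^{1/2}$, which is exactly the required bound (iv). The main obstacle throughout will be the uniform positive lower bound on $\tilde G$ in the overlap regime, since all subsequent estimates propagate linearly once $F \sim 1/\delta$ is established; the remainder is routine chain-rule bookkeeping.
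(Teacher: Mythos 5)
Your proposal follows essentially the same route as the paper: check (ii), (iii), (vi) are built in, verify the boundary conditions (i) using the facts that $\mathrm{supp}\,\tilde w(x,\cdot)\subset[-\varphi(x)-\delta,\varphi(x)+\delta]\subset(-h,h)$ and that $\varphi'\equiv 0$ near $x=0,l$ forces $A'=\tilde w_x=0$ there, and then establish (iv) by bounding $\int_0^h\tilde w_y^2\,dy$ above and below by a constant times $1/\delta$ and differentiating $A$. Your factorization $F(x)=G(\varphi(x))$ with $G(s)=\delta^{-1}\tilde G(s/\delta)$ is a clean way to organize the same chain-rule bookkeeping the paper does by implicit differentiation of $A^2F=h$, and your final Leibniz-rule collection reproduces the bound in assumption (iv) correctly.

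The one genuine weak spot is your lower bound for $\tilde G$ on the overlap interval. You propose a sign argument that works ``most cleanly by choosing $\psi$ strictly decreasing on $[0,1]$.'' But the lemma is stated for an \emph{arbitrary} even $\psi\in C^\infty_c((-1,1);[0,1])$ with $\psi(0)=1$, so this only proves a weaker statement (it does suffice for the downstream application in Proposition \ref{Prop:overall}, where one is free to pick $\psi$, but it does not prove the lemma as written). The paper instead observes that
\begin{equation*}
\int_0^h\tilde w_y^2\,dy\ \ge\ \int_{\varphi(x)}^{\varphi(x)+\delta}\tilde w_y^2\,dy\ \ge\ \frac{1}{\delta}\Bigl(\int_{\varphi(x)}^{\varphi(x)+\delta}\tilde w_y\,dy\Bigr)^{\!2}=\frac{1}{\delta}\,\tilde w^2(x,\varphi(x))\ \ge\ \frac{1}{\delta},
\end{equation*}
using Cauchy--Schwarz, the fact that $\tilde w(x,\varphi(x)+\delta)=0$, and $\tilde w(x,\varphi(x))\ge\psi_\delta(0)=\psi(0)=1$. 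This gives the explicit bound $1/\delta\le\int_0^h\tilde w_y^2\,dy\le C/\delta$ (and hence $c(h\delta)^{1/2}\le A\le C(h\delta)^{1/2}$) for every admissible $\psi$, with no monotonicity assumption. You should replace your sign argument with this FTC/Cauchy--Schwarz step; the rest of your proof then goes through unchanged.
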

\begin{proof}
Observe that $w$ satisfies assumption (ii) of Lemma \ref{LemmaUB} since
$\psi_\delta$ is even.
Equation \eqref{eqdefA} ensures that $w$ satisfies assumption (iii). Clearly $w$ also satisfies assumption (vi).
%{\color{red}[$w$ does not fit into either case (v) or (vi) of the Lemma!!! Need to replace (vi) with a case
%that includes it.]}
It remains to check assumptions (i) and (iv).

Since $\delta \le h/2$ and $|\varphi| \le h/2$, then $w(x,y)=0$ for $y$ in a neighbourhood of $\pm h$.
We differentiate (\ref{eqdefA}) with respect to $x$ and rearrange to get
\begin{equation}
\label{A'}
A'(x) = \frac{\displaystyle - A (x)\int_{0}^h  \tilde{w}_y \tilde{w}_{xy} \, dy}{\displaystyle \int_{0}^h \tilde{w}_y^2 \, dy}.
\end{equation}
By assumption $\varphi'$ vanishes in a neighbourhood of $x=0,l$, and consequently so do $\tilde{w}_{xy}$, $A'$ and
\begin{equation*}
w_x(x,y) = A'(x) \tilde{w}(x,y) + A(x)(\psi'_\delta(y+\varphi(x))+\psi'_\delta(y-\varphi(x))) \varphi'(x).
\end{equation*}
Therefore assumption (i) of Lemma \ref{LemmaUB} is satisfied.

Finally we check assumption (iv). Observe that
\begin{equation*}
\int_{0}^h \tilde{w}_y^2(x,y) \, dy
\ge
\int_{\varphi(x)}^{\varphi(x)+\delta} \tilde{w}_y^2(x,y) \, dy
\ge \frac1\delta \left(
\int_{\varphi(x)}^{\varphi(x)+\delta} \tilde{w}_y(x,y) \, dy\right)^2
=\frac1\delta \tilde w^2(x,\varphi(x))\ge \frac1\delta
\end{equation*}
since $\psi(0)=1$. Observing that by definition
$|\tilde w_y|\le C/\delta$
we conclude
\begin{equation}
\label{intBound}
\frac{1}{\delta} \le \int_{0}^h \tilde{w}_y^2 \, dy \le \frac{C}{\delta} \qquad \textrm{ and } \qquad c (h\delta)^{1/2} \le A\le C (h\delta)^{1/2}.
\end{equation}
%and
%%Notice that $w_0$ touches both 0 and 1, hence the integral is bounded from
%%below and above by $c/\delta$ (different $c$, independent on $x$).
%\begin{equation*}
%\label{eq:Abound}
%c (h\delta)^{1/2} \le A\le C (h\delta)^{1/2}.
%\end{equation*}
%We differentiate (\ref{eqdefA}) with respect to $x$ to get
%\begin{equation}
%\label{A'}
%  2 A A' \int_{0}^h \tilde{w}_y^2 \, dy + 2 A^2 \int_{0}^h  \tilde{w}_y \tilde{w}_{xy} \, dy = 0.
%\end{equation}
Since $|\tilde{w}_y| \le C \delta^{-1}$ and $|\tilde{w}_{xy}| \le C | \varphi'| \delta^{-2}$,  then \eqref{A'} and \eqref{intBound} give
%\begin{equation*}
%  | A'| = |A| \frac{\displaystyle \left| \int_{\max \{0,-\delta+\varphi(x)\} }^{\delta+\varphi(x)} \tilde{w}_y \tilde{w}_{xy} \, dy \right|}{\displaystyle \left| \int_0^h \tilde{w}_y^2 \, dy \right|}   \le  C \frac{|A| |\varphi'|}{\delta}.
%\end{equation*}
\begin{equation*}
  | A'| = |A| \left| \int_{\max \{0,-\delta+\varphi(x)\} }^{\delta+\varphi(x)} \tilde{w}_y \tilde{w}_{xy} \, dy \right| \;
  \left| \int_0^h \tilde{w}_y^2 \, dy \right|^{-1}   \le  C \frac{|A| |\varphi'|}{\delta} \le C |A| \frac{h}{\delta l}.
\end{equation*}
Note that
\begin{align*}
| \tilde{w}_{xxy} | & \le |\psi'''_\delta(y+\varphi(x))+\psi'''_\delta(y-\varphi(x)) | | \varphi' |^2 +
|\psi''_\delta(y+\varphi(x))+\psi''_\delta(y-\varphi(x)) | | \varphi'' |
\\
& \le C \frac{1}{\delta^3} \left( \frac{h}{l} \right)^2 + C \frac{1}{\delta^2} \frac{h}{l^2} \le C \frac{h^2}{\delta^3 l^2}
\end{align*}
since $\delta < h$.
Therefore
\begin{align*}
|A''| & \le \frac{\displaystyle \left| -A' \int_{0}^h  \tilde{w}_y \tilde{w}_{xy} \, dy - A
\int_{0}^h  ( \tilde{w}_{xy}^2 + \tilde{w}_y \tilde{w}_{xxy} ) \, dy
\right|}{\displaystyle \int_{0}^h \tilde{w}_y^2 \, dy}
+2|A| \frac{\displaystyle \left( \int_{0}^h  \tilde{w}_y \tilde{w}_{xy} \, dy \right)^2 }{\displaystyle \left(\int_{0}^h \tilde{w}_y^2 \, dy\right)^2}
\\
& \le C \delta \left( |A'| \delta \delta^{-1} |\varphi'| \delta^{-2} + |A| \delta (|\varphi'|^2 \delta^{-4} + \delta^{-1}h^2 \delta^{-3} l^{-2})
\right) + C |A| \delta^2 \left( \delta \delta^{-1} |\varphi'| \delta^{-2} \right)^2
%\\
%& \le C \delta^2 |A| (|\varphi'|^2 \delta^{-3} + |\varphi'|^2 \delta^{-3} + h^2 l^{-2} \delta^{-3} + |\varphi'|^2 \delta^{-4})
\\
& \le C |A| \left( \frac{h}{\delta l} \right)^2
% h^2 l^{-2} \delta^{-2}
\end{align*}
since $|A'| \le C |A| |\varphi'|/ \delta$ and $|\varphi'| \le C h/l$.
%Analogously for the higher derivatives.
Putting everything together gives
\begin{align*}
|w| & \le |A| |\tilde{w}| \le C (h \delta)^{1/2}, \\
|w_x| & \le |A'| |\tilde{w}| + |A| |\psi'_\delta(y+\varphi(x))+\psi'_\delta(y-\varphi(x)) | |\varphi'| \le C |A| |\varphi'| \delta^{-1}
\le C (h \delta)^{1/2} \frac{h}{\delta l}, \\
|w_y| & \le |A| |\psi'_\delta(y+\varphi(x))+\psi'_\delta(y-\varphi(x)) | \le C (h \delta)^{1/2} \frac{1}{\delta},
\\
|w_{xx} | & \le |A''| |\tilde{w}| + 2 |A'| |\psi'_\delta(y+\varphi(x))+\psi'_\delta(y-\varphi(x)) | |\varphi'|
+|A| |\psi''_\delta(y+\varphi(x))+\psi''_\delta(y-\varphi(x)) | |\varphi'|^2
\\ & \quad
+ |A| |\psi'_\delta(y+\varphi(x))+\psi'_\delta(y-\varphi(x)) | |\varphi''|
\\
& \le C (h \delta)^{1/2} \left( \frac{h^2}{\delta^2 l^2} +  \frac{h}{\delta l^2} \right)
\le C (h \delta)^{1/2} \left( \frac{h}{\delta l} \right)^2,
\\
|w_{xy} | & \le |A'| |\psi'_\delta(y+\varphi(x))+\psi'_\delta(y-\varphi(x)) | + |A| |\psi''_\delta(y+\varphi(x))+\psi''_\delta(y-\varphi(x)) | |\varphi'|
\\
& \le C  (h \delta)^{1/2} \left( \frac{h}{\delta l} \right) \left( \frac{1}{\delta} \right),
\\
|w_{yy} | & \le |A| |\psi''_\delta(y+\varphi(x))+\psi''_\delta(y-\varphi(x)) | \le C (h \delta)^{1/2} \left( \frac{1}{\delta} \right)^2
\end{align*}
%In summary
%\begin{equation}
%  |\partial^a_x\partial^b_yw|\le c \frac{(h\delta)^{1/2} }{\delta^{a+b}} {\color{red} h^{a-1}}
%|\varphi^{(a)}|\,, \quad a,b \in \{ 0 , 1, 2 \}.
%\end{equation}
%Since $|\varphi^{(a)}|\le C h/l^a$,
and hence $w$
satisfies assumption (iv) of Lemma \ref{LemmaUB}.
%Therefore inserting $w=A\tilde{w}$ into Lemma INSERT gives a construction with energy
%\begin{equation}\label{eqonebrahch}
%E\le  c \gamma \delta l + c \sigma^2 \frac{l h }{\delta^2}
%  + c \frac{h^6}{\delta l^3}\,.
%\end{equation}
\end{proof}
In the following lemma we construct a deformation that takes one fold of width $2 \delta$ at $x=l$ and shrinks it down to a fold of width $2 \tilde{\delta} < 2 \delta$ at $x=0$. See Figure \ref{RegimeE}, right.
\begin{Lemma}[Fold shrinkage construction]
\label{Lemm:FS}
Let $0 < \delta \le h$, $\lambda\in [1/4,1]$.
Let $\psi\in C^\infty_c((-1,1);[0,1])$ be an even function with $\psi(0)=1$.  Define $\psi_\delta(y):=\psi(y/\delta)$.
Let $\hat{\delta} \in C^2([0,l];[\lambda\delta,\delta])$ satisfy $\hat{\delta}(0)=\lambda{\delta}$, $\hat{\delta}(l)=\delta$,
$\hat{\delta}'=0$ in a neighbourhood of $x=0,l$,
and $|\hat{\delta}^{(k)}| \le C \delta/l^k$. Set
$\tilde{w}(x,y):=\psi_{\hat{\delta}(x)}(y)$.
Define $A:[0,l]\to(0,\infty)$ by
\begin{equation}\label{eqdefA2}
A^2(x)  \int_{0}^h \tilde{w}_y^2(x,y) \, dy =h.
\end{equation}
Then $w:[0,l]\times[-h,h] \to [0,\infty)$ defined by
\begin{equation}
w(x,y):=A(x)\tilde{w}(x,y)
\end{equation}
satisfies the assumptions of Lemma \ref{LemmaUB}.
\end{Lemma}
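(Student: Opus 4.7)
The plan is to verify assumptions (i)--(iv) and (v) of Lemma \ref{LemmaUB}, paralleling the proof of Lemma \ref{LemmaFSS} with the variable-width function $\hat{\delta}$ playing the role that the translation $\varphi$ played there. Assumption (v) is immediate, since $\psi_{\hat{\delta}(x)}$ is supported in $[-\hat{\delta}(x),\hat{\delta}(x)] \subset [-\delta,\delta]$; assumption (ii) follows from $\psi$ being even; and assumption (iii) is exactly the defining equation \eqref{eqdefA2} since $w_y = A\tilde{w}_y$. For (i), the condition $w=0$ near $y=\pm h$ follows from $\delta \le h$ combined with the support statement, while for $w_x=0$ near $x=0,l$, one writes $w_x = A'\tilde{w} + A\tilde{w}_x$, observes that $\tilde{w}_x(x,y) = -\psi'(y/\hat{\delta}(x))\,y\,\hat{\delta}'(x)/\hat{\delta}(x)^2$ vanishes wherever $\hat{\delta}'$ does, and checks that $A'$ vanishes there too (see next paragraph).

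The key algebraic fact is the exact evaluation
\begin{equation*}
I(x) := \int_0^h \tilde{w}_y^2(x,y)\,dy = \frac{1}{\hat{\delta}(x)} \int_{-1}^1 (\psi'(s))^2 \, ds,
\end{equation*}
obtained by the substitution $s=y/\hat{\delta}(x)$. Combined with $\hat{\delta}\in[\delta/4,\delta]$, this gives $I \sim 1/\delta$, so $A^2 = h/I$ yields $A \sim (h\delta)^{1/2}$. In particular, $I$ depends on $x$ only through $\hat{\delta}(x)$, so $I'$ is proportional to $\hat{\delta}'$; differentiating the identity $A^2 I = h$ then gives $A' = -AI'/(2I)$, so $A'$ vanishes wherever $\hat{\delta}'$ does, completing the verification of (i). The hypothesis $|\hat{\delta}^{(k)}| \le C\delta/l^k$ moreover provides $|I^{(k)}/I| \le C/l^k$, and differentiating $A^2 I = h$ once and twice yields $|A'| \le C(h\delta)^{1/2}/l$ and $|A''| \le C(h\delta)^{1/2}/l^2$.

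For the derivative estimates in (iv), elementary chain-rule bounds on $\tilde{w}(x,y) = \psi(y/\hat{\delta}(x))$ using $|y| \le \delta$ in the support of $\tilde{w}$ and $\hat{\delta} \sim \delta$ give
\begin{equation*}
|\partial_x^a \partial_y^b \tilde{w}| \le C (1/l)^a (1/\delta)^b \qquad \text{for } a,b \in \{0,1,2\}.
\end{equation*}
The Leibniz rule applied to $w = A\tilde{w}$, together with the bounds on $A, A', A''$ derived above, then produces
\begin{equation*}
|\partial_x^a\partial_y^b w| \le C(h\delta)^{1/2} (1/l)^a (1/\delta)^b,
\end{equation*}
which is stronger than the required bound $c(h/\delta l)^a (1/\delta)^b (h\delta)^{1/2}$ by a factor of $(\delta/h)^a \le 1$, since $\delta \le h$.

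The main obstacle is essentially bookkeeping in the chain-rule expansions for the higher derivatives of $\tilde{w}$: each derivative produces a sum of terms of the form $\psi^{(j)}(y/\hat{\delta})$ multiplied by a product of powers of $\hat{\delta}^{(i)}/\hat{\delta}^m$ and of $y/\hat{\delta}^m$, all of which are controlled using $|y|\le\delta$ and $\hat{\delta}\sim\delta$. The structural reason the estimates close is that the varying quantity $\hat{\delta}$ has amplitude of order $\delta$ (not $h$), so $|\hat{\delta}^{(k)}|$ scales with $\delta/l^k$ rather than with $h/l^k$; this is precisely why the fold shrinkage construction is a factor of $(\delta/h)^a$ cheaper in $x$-derivatives than the fold splitting construction of Lemma \ref{LemmaFSS}.
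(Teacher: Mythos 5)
Your proof is correct and follows essentially the same route as the paper's: the exact evaluation $\int_0^h\tilde{w}_y^2\,dy = c_*/\hat{\delta}(x)$ giving $A \sim (h\delta)^{1/2}$ and $|A^{(k)}| \le C(h\delta)^{1/2}/l^k$, the chain-rule bounds $|\partial_x^a\partial_y^b\tilde{w}| \le C\,l^{-a}\delta^{-b}$, and the observation that the resulting bounds on $w$ beat those required by assumption (iv) by a factor $(\delta/h)^a\le 1$. The only slip is writing $\int_{-1}^1(\psi')^2\,ds$ where the substitution on $\int_0^h$ yields $\int_0^1(\psi')^2\,ds$, a harmless factor of $2$ that does not affect the scaling.
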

\begin{proof}
The same arguments used in the proof of Lemma \ref{LemmaFSS} show that $w$ satisfies assumptions (i)--(iii) of Lemma \ref{LemmaUB}.
Clearly it also satisfies assumption (v). It remains to check assumption (iv).
Observe that
\begin{equation*}
\int_0^h \tilde{w}_y^2(x,y) \, dy
= \int_0^h [\psi_{\hat{\delta}(x)}'(y)]^2 \, dy
= \frac{1}{\hat{\delta}^2(x)} \int_0^{\hat{\delta}(x)} \left[ \psi' \left(\tfrac{y}{\hat{\delta}(x)} \right) \right]^2 \, dy
= \frac{1}{\hat{\delta}(x)} \int_0^1 [\psi'(y)]^2 \, dy = \frac{c_*}{\hat{\delta}(x)}.
\end{equation*}
Therefore
\begin{equation*}
A(x) = \left(\frac{\hat{\delta}(x) h}{c_*}\right)^{1/2} \le c (\delta h)^{1/2}.
\end{equation*}
Note that
\begin{equation*}
\frac{\partial}{\partial \delta} \psi_\delta (y) = - \psi' \left( \frac{y}{\delta} \right) \frac{y}{\delta^2}.
\end{equation*}
Since $\hat{\delta}(x) \ge \lambda \delta$, we estimate
\begin{align*}
|\tilde{w}_x(x,y)| & = \left| \psi' \left( \frac{y}{\hat{\delta}(x)} \right)  \frac{y}{\hat{\delta}^2(x)} \hat{\delta}'(x) \right| \le C
\left| \frac{\hat{\delta}'(x)}{\hat{\delta}(x)} \right| \le \frac{C}{l},
\\
|\tilde{w}_y(x,y)| & = \left| \psi' \left( \frac{y}{\hat{\delta}(x)} \right)  \frac{1}{\hat{\delta}(x)} \right| \le \frac{C}{\delta},
\\
 |\tilde{w}_{yx}(x,y)| & \le \left| \psi'' \left( \frac{y}{\hat{\delta}(x)} \right)  \frac{y}{\hat{\delta}^3(x)} \hat{\delta}'(x) \right|
 + \left| \psi' \left( \frac{y}{\hat{\delta}(x)} \right)  \frac{\hat{\delta}'(x)}{\hat{\delta}^2(x)}  \right|
 \le C \left| \frac{\hat{\delta}'(x)}{\hat{\delta}^2(x)} \right| \le \frac{C}{\delta l},
 \\
 |\tilde{w}_{xxy}(x,y)| & \le
 \left| \psi''' \left( \frac{y}{\hat{\delta}(x)} \right)  \frac{y^2}{\hat{\delta}^5(x)} (\hat{\delta}'(x))^2 \right|
 +
3 \left| \psi'' \left( \frac{y}{\hat{\delta}(x)} \right)  \frac{y}{\hat{\delta}^4(x)} (\hat{\delta}'(x))^2 \right|
%+
%\left| \psi'' \left( \frac{y}{\hat{\delta}(x)} \right)  \frac{y}{\hat{\delta}^3(x)} \hat{\delta}''(x) \right|
\\
& \; \,
+
\left| \psi'' \left( \frac{y}{\hat{\delta}(x)} \right)  \frac{y}{\hat{\delta}^3(x)} \hat{\delta}''(x) \right|
+
\left| \psi' \left( \frac{y}{\hat{\delta}(x)} \right)  \frac{1}{\hat{\delta}^2(x)} \hat{\delta}''(x) \right|
+
2 \left| \psi' \left( \frac{y}{\hat{\delta}(x)} \right)  \frac{1}{\hat{\delta}^3(x)} (\hat{\delta}'(x))^2 \right|
\\
& \le C \frac{1}{\delta l^2}.
\end{align*}
Similarly to the proof of Lemma \ref{LemmaFSS}, it follows that
\begin{equation*}
| A' | \le C \frac{|A|}{l}, \qquad |A''| \le C \frac{|A|}{l^2}.
\end{equation*}
The estimates on the derivatives of $\tilde{w}$ and $A$ are at least as good those given in the proof of Lemma \ref{LemmaFSS}
(in fact they are better). Therefore the derivatives of $w$ satisfy the same estimates proved in Lemma \ref{LemmaFSS}, and
$w$ satisfies assumption (iv) of Lemma \ref{LemmaUB}, as required.
\end{proof}

\begin{remark}
From the proof of Lemma \ref{Lemm:FS} we see that fold shrinkage is cheaper than fold splitting.
\end{remark}

%\clearpage
\subsection{Overall construction.}
In this section we put everything together to prove Theorem \ref{UB1} for the branching regimes $C$ and $D$.
\begin{Proposition}[Overall construction for $0\le \gamma\le \sigma^{-4/9}$]
\label{Prop:overall}
  Let $0<l_1\le l_2$ and $ \gamma\le \sigma^{-4/9}$. Then
  \begin{equation*}
    \min_V I^{(\sigma,\gamma)} \le C l_1 l_2 (\sigma^{1/2}\gamma^{5/8} + \sigma).
  \end{equation*}
\end{Proposition}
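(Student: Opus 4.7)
The plan is to paste together an admissible displacement $(\bu,w)$ from the building blocks already set up in the paper: the laminate periodicity cells of Section \ref{Bs1}, the branching boxes of Lemmas \ref{LemmaFSS} and \ref{Lemm:FS} (whose energy is controlled by Lemma \ref{LemmaUB}), the boundary-layer interpolation of Section \ref{Bs2} (Lemma \ref{Energy:BBL}); and, when $\gamma$ is very small, a period-halving stage in the spirit of the construction used in \cite{BenBelgacem1} for the case $\gamma=0$ (schematically Figure \ref{RegimeG}). I would parameterize the branching cascade by a coarsest scale $(h_0,\delta_0)$ and set $h_k:=2^{-k}h_0$, $\delta_k:=2^{-k/3}\delta_0$; the exponent $1/3$ is dictated by the cell relation $A^2=(8/\pi^2)\delta h$ together with the balance $\delta_k^3\sim(\sigma l_1)^2 h_k/\gamma$ between bending and bonding of a single cell. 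Generation $k$ will occupy a horizontal strip of length $l_k$, inside which the fold-shrinkage of Lemma \ref{Lemm:FS} (with $\lambda=2^{-1/3}$) is applied first to send $2\delta_k$ to $2\delta_{k+1}$, and the fold-splitting of Lemma \ref{LemmaFSS} is then applied to send the period $2h_k$ to $2h_{k+1}=h_k$.

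Each branching box at level $k$ contributes, by Lemma \ref{LemmaUB}, at most
\[
c\gamma\delta_kl_k+c(\sigma l_1)^2\frac{l_kh_k}{\delta_k^2}+c\frac{h_k^6}{\delta_kl_k^3},
\]
and there are $\lfloor l_2/(2h_k)\rfloor$ such boxes stacked vertically. I would pick $l_k$ to balance the three terms, which gives $l_k\sim h_k^{4/3}\gamma^{-1/12}(\sigma l_1)^{-1/3}$ and a common value of order $\gamma\delta_kl_k$. Together with the optimal choice $h_0\sim l_1\sigma^{1/4}\gamma^{1/16}$ (so that $\delta_0\sim l_1\sigma^{3/4}\gamma^{-5/16}$ and $l_0\sim l_1$), this makes $\{l_k\}$ a geometric series in $2^{-4/3}$ with $\sum_kl_k$ comparable to $l_1$ (after possibly shrinking the implicit constant in $h_0$ to leave room for a boundary layer), and the per-level energy $l_2\gamma\delta_kl_k/h_k$ a geometric series in $2^{-2/3}$ that telescopes to a total of order $l_1l_2\sigma^{1/2}\gamma^{5/8}$.

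I would terminate the cascade at the first level $K$ at which either the scale $h_K$ is small enough for a direct Section \ref{Bs2} boundary-layer interpolation (with $\varepsilon\sim h_K$) to cost at most the target, in which case Lemma \ref{Energy:BBL} contributes at most $l_1l_2\sigma^{1/2}\gamma^{5/8}$; or $\delta_k$ catches up with $h_k$, which by $\delta_k/h_k=(\delta_0/h_0)2^{2k/3}$ happens precisely when $\gamma$ is small enough to put us in regime $D$. In the latter case I would switch from branching to a sequence of period-halving $\delta\mapsto\delta/2$ steps of the form in Figure \ref{RegimeG}, already known from \cite{BenBelgacem1} to be admissible and to contribute at most $\lesssim l_1l_2\sigma$ when continued down to $\delta\sim\sigma l_1$, at which point the Section \ref{Bs2} boundary layer closes everything off. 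Adding the branching, period-halving and boundary-layer contributions then yields the claimed bound $Cl_1l_2(\sigma^{1/2}\gamma^{5/8}+\sigma)$.

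The step I expect to be the main obstacle is the geometric bookkeeping: one must verify at every level of the cascade and in both sub-cases the hypotheses $h_k\le l_k$, $\delta_k\le h_k/2$ and $\varphi([0,l_k])\subset[0,h_k/2]$ of Lemmas \ref{LemmaFSS} and \ref{Lemm:FS}, together with the global constraint $\sum_kl_k+\varepsilon\le l_1$, and to check that the standing hypothesis $\gamma\le\sigma^{-4/9}$ is precisely what makes all of these simultaneously compatible for a single scale choice (for $\gamma$ near $\sigma^{-4/9}$ only the first, coarsest cell is needed and the cascade degenerates, while for $\gamma\to 0$ the cascade is entirely replaced by period-halving and the construction reduces to that of \cite{BenBelgacem1}). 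A secondary subtlety is that the constants of integration in the definitions \eqref{uv} of $\bu$ and $v$ must be matched across the vertical interfaces between consecutive generations so that the global $(\bu,w)$ lies in $V$; the symmetry assumption (ii) of Lemma \ref{LemmaUB} is what allows this gluing with zero extra stretching cost.
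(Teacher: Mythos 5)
Your proposal matches the paper's construction essentially step for step: the same branching cascade $h_k=2^{-k}h_0$, $\delta_k\sim(\sigma l_1)^{2/3}\gamma^{-1/3}h_k^{1/3}$, $L_k\sim(\sigma l_1)^{-1/3}\gamma^{-1/12}h_k^{4/3}$, the same fold-splitting/fold-shrinkage decomposition of each strip, the same coarse scale $h_0\sim l_1\sigma^{1/4}\gamma^{1/16}$, and the same terminal switch — either a direct boundary-layer interpolation at $h_N\sim\sigma l_1\gamma$ when $\gamma\ge 1$, or (when $\gamma$ drops below a threshold depending on the level) passing into the $\delta_k=h_k$ regime and continuing by period-halving as in \cite{BenBelgacem1} down to $\delta\sim\sigma l_1$.

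Two small corrections to your surrounding commentary. First, the global packing constraint $\sum_k l_k+\varepsilon\le l_1$ that you flag as the main obstacle does not actually need to be enforced: the paper sidesteps it by constructing the test field on the enlarged strip $(0,\varepsilon+l_{\mathrm{BR}}+l_1)\times(0,l_2)\supset\Omega$ and simply restricting to $\Omega$, which can only decrease the energy. Second, the hypothesis $\gamma\le\sigma^{-4/9}$ is \emph{not} the compatibility threshold for the cascade; the cascade degenerates (i.e.\ $h_0=h_N$) at $\gamma=\sigma^{-4/5}$, which is what the proof actually uses to ensure $h_0\ge h_N$. The cut-off $\sigma^{-4/9}$ is chosen one regime earlier, where $\sigma^{1/2}\gamma^{5/8}$ ceases to beat the laminate bound $(\sigma\gamma)^{2/5}$ of Proposition \ref{Prop:B} — i.e.\ it marks where the branching answer stops being the right one, not where the construction stops being admissible.
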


\begin{remark}
\label{UB3}
Observe that
\begin{equation*}
\sigma^{1/2} \g^{5/8} + \sigma \le
\left\{
\begin{array}{cl}
%2 (\sigma \gamma)^{2/5} & \textrm{if } \sigma^{-4/9} \le \g < \sigma^{-1}, \; \textrm{i.e., if} \; (\sigma,\gamma) \in B, \\
2 \sigma^{1/2} \g^{5/8} & \textrm{if } \, \g \ge \sigma^{4/5}, \; \textrm{in particular if} \; (\sigma,\gamma) \in C,  \\
2 \sigma & \textrm{if } \, \g < \sigma^{4/5}, \; \textrm{i.e., if} \; (\sigma,\gamma) \in D.
\end{array}
\right.
\end{equation*}
Also
\begin{equation*}
\sigma^{1/2} \g^{5/8} + \sigma \ge (\sigma \gamma)^{2/5} \; \textrm{ if } \; \g \ge \sigma^{-4/9}, \; \textrm{in particular if} \; (\sigma,\gamma) \in B.
\end{equation*}
\end{remark}
\begin{figure}
\centerline{\includegraphics[height=0.3\textheight]{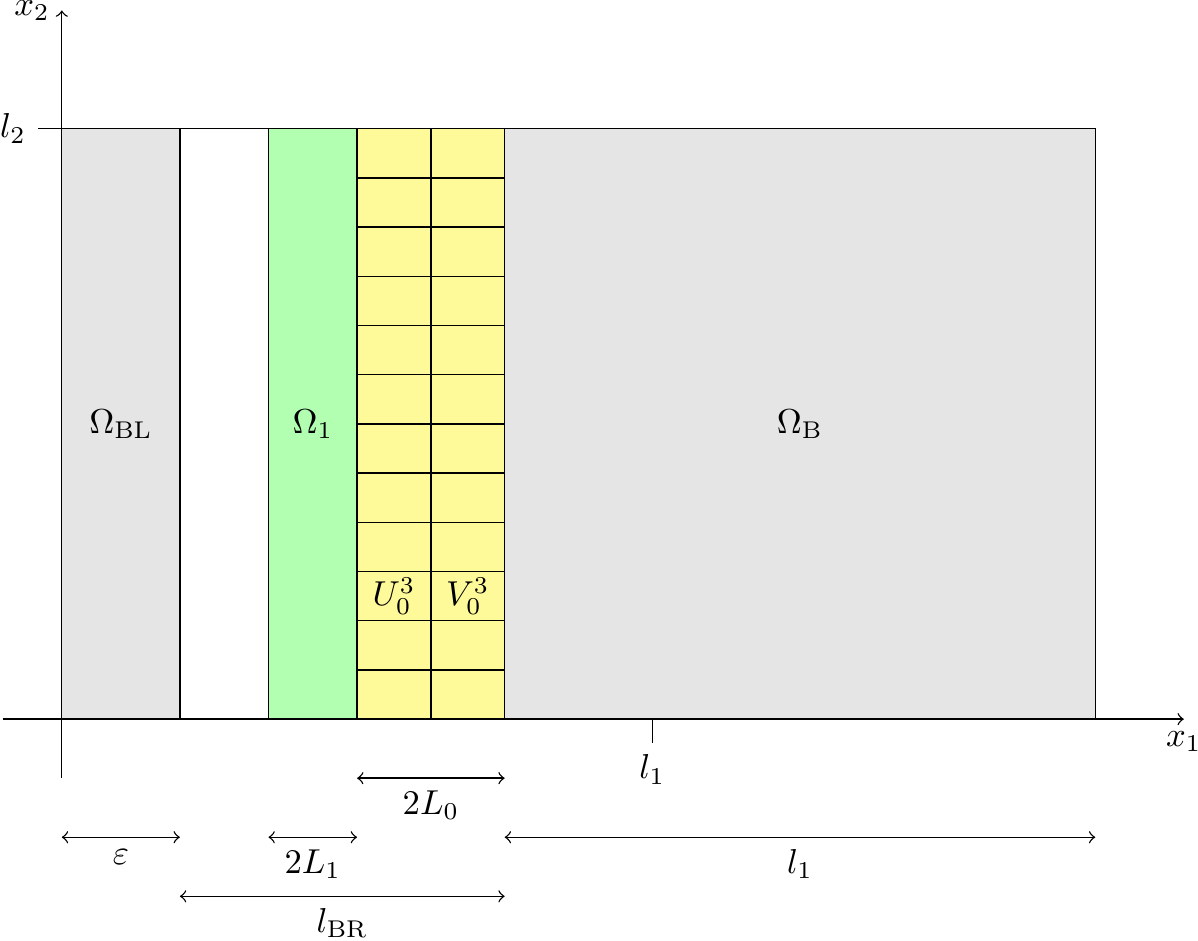}}
\caption{Sketch of the geometry in the proof of Proposition \ref{Prop:overall}. The sets are so defined that $\Omega$ is covered by
their union, this is ensured by making $\Omega_\textrm{B}$ have width $l_1$.}
\label{Fig-ub1}
\end{figure}

\begin{proof}[Proof of Proposition \ref{Prop:overall}]
We cover  $\Omega = (0,l_1) \times (0,l_2)$ by the union of three sets: a boundary layer region $\Omega_{\textrm{BL}}$, a branching region $\Omega_{\textrm{BR}}$, and a bulk region
$\Omega_{\textrm{B}}$,
\begin{equation*}
\Omega \subset \tilde\Omega:=\Omega_{\textrm{BL}} \cup \Omega_{\textrm{BR}} \cup \Omega_{\textrm{B}}
\end{equation*}
where
\begin{equation*}
\Omega_{\textrm{BL}} := (0,\varepsilon) \times (0,l_2), \quad \Omega_{\textrm{BR}} := [\varepsilon,\varepsilon+l_\textrm{BR}] \times (0,l_2), \quad
\Omega_{\textrm{B}} := (\varepsilon+l_\textrm{BR},\varepsilon+l_\textrm{BR}+l_1) \times (0,l_2),
\end{equation*}
and where the boundary layer width $\varepsilon$ and the branching  region width $l_\textrm{BR}$ are to be determined.
In particular,  $l_\textrm{BR}$  will be chosen by summing up the optimal widths of the individual branching steps.
We construct the test function on $\tilde\Omega$ and then restrict to $\Omega$,
which only decreases the energy; 
note that $\Omega \subset \tilde\Omega$ for any choice of the parameters.
In the
boundary layer region $\Omega_{\textrm{BL}}$ we use the construction $(\tilde{u},\tilde{v},\tilde{w})$ from Section
\ref{Bs2} with $h = h_N$ and $\delta = \delta_N$ to be determined. In the
bulk region $\Omega_{\textrm{B}}$ we use the construction from Section
\ref{Bs1} with $l = l_1$, $h = h_0$ and $\delta = \delta_0$, again to be determined.
We assume $h_N=2^Nh_0$ for some $N\in\N$, also to be determined.
(We assume without loss of generality as in Section \ref{Bs3} that
$2 h_0$ divides $l_2$ and use the construction from Section \ref{Bs1} on the $l_2/2h_0$ rectangles $ (\varepsilon+l_\textrm{BR},\varepsilon+l_\textrm{BR}+l_1)\times (2h_0(j-1),2h_0j)$, $j \in \{1 , \ldots , l_2/2 h_0\}$.)

We split up the branching region $\Omega_{\textrm{BR}}$ into $N$ vertical strips $\Omega_i$ of width $2 L_i$, $i \in \{0,\ldots,N-1\}$:
\begin{equation*}
\Omega_{\textrm{BR}} = \bigcup_{i=0}^{N-1} \Omega_i, \qquad \Omega_i :=
\Bigg\{ [0,2L_i] + \varepsilon + \sum_{k=i+1}^{N-1} 2L_k \Bigg\} \times (0,l_2).
\end{equation*}
Note that the strips are labelled from left to right in decreasing order $\Omega_{N-1}, \ldots , \Omega_1, \Omega_0$.
The total width of the branching region is defined by 
$l_\textrm{BR}:=\sum_{i=0}^{N-1} 2L_i$, while the values of the $L_i$'s are chosen below.
Let $h_{i+1}=h_i/2$, $i \in \{0, \ldots , N-1 \}$.
Then each strip is split into $l_2/2h_i$ rectangles $R_i^j$ of height $2 h_i$, see Figure \ref{Fig-ub1}.
\begin{equation*}
\Omega_i = \bigcup_{j=1}^{l_2/2h_i} R_i^j, \qquad R_i^j := \Bigg\{ [0,2L_i] + \varepsilon + \sum_{k=i+1}^{N-1} 2L_k \Bigg\} \times
 \Bigg\{ [0,2 h_i] + (j-1)2h_i \Bigg\}.
\end{equation*}
Finally, we split each rectangle $R_i^j$ into two rectangles $U_i^j$, $V_i^j$ of width $L_i$:
\begin{equation*}
R_i^j = U_i^j \cup V_i^j
\end{equation*}
\begin{align*}
U_i^j & := \Bigg\{ [0,L_i] + \varepsilon + \sum_{k=i+1}^{N-1} 2L_k \Bigg\} \times \Bigg\{ [0,2 h_i] + (j-1)2h_i \Bigg\},
\\
V_i^j & := \Bigg\{ [L_i,2L_i] + \varepsilon + \sum_{k=i+1}^{N-1} 2L_k \Bigg\} \times \Bigg\{ [0,2 h_i] + (j-1)2h_i \Bigg\}.
\end{align*}
\begin{figure}
\centerline{\includegraphics[width=0.8\textwidth]{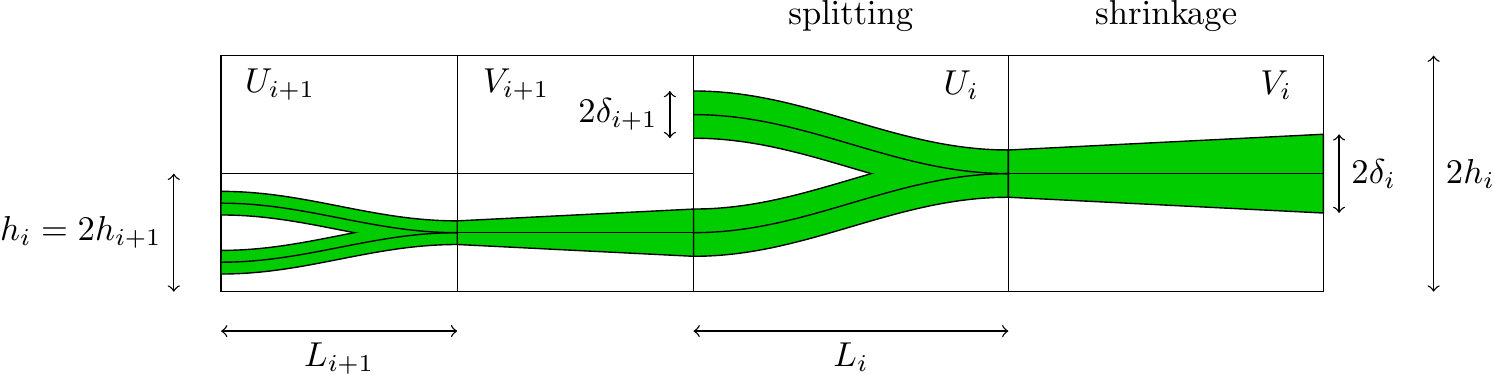}}
\caption{Sketch of the geometry in the proof of Proposition \ref{Prop:overall}. The area marked green is the set where $w>0$.}
\label{Fig-ub2}
\end{figure}
Let $\delta_0 > \delta_1 > \cdots > \delta_N$.
On the rectangles $U_i^j$ we use the fold splitting construction from Lemma \ref{LemmaFSS} with
$h = h_i$, $\delta = \delta_{i+1}$, $l=L_i$. This is admissible provided that
\begin{equation}
 \label{eqadmonUij}
 \delta_{i+1}\le h_{i+1}=\frac{h_i}2 \text{ and } h_i\le L_i\,.
\end{equation}
On the rectangles $V_i^j$ we use the fold shrinkage construction from Lemma \ref{Lemm:FS} with
$h = h_i$, $\delta = \delta_{i}$, $\lambda=\delta_{i+1}/\delta_{i}$, $l=L_i$, see Figure \ref{Fig-ub2}.
This is admissible provided that
\begin{equation}
 \label{eqadmonVij}
 \delta_{i}\le h_{i}\le L_i \text{ and } \frac14\delta_i\le \delta_{i+1}\le \delta_i\,.
\end{equation}
By Lemmas \ref{LemmaUB},   \ref{LemmaFSS} and \ref{Lemm:FS} the energy of the displacement satisfies
\begin{equation}
\label{eq:ebound}
I^{(\sigma,\gamma)}[\bu,w,U_i^j\cup V_i^j]\le  c \gamma \delta_i L_i + c (\sigma l_1)^2 \frac{L_i h_i }{\delta_i^2}
  + c \frac{h_i^6}{\delta_i L_i^3}.
\end{equation}
Equating terms on the right-hand leads to the optimality conditions
\begin{equation*}
\delta_i \sim  (\sigma l_1)^{2/3} \gamma^{-1/3} h_i^{1/3}, \quad L_i \sim
%(\sigma l_1)^{-1/3} \gamma^{-1/12} h_i^{4/3}.  %, i \in \{ 0, \ldots , N-1 \}
%\gamma^{-1/4} h_i^{3/2} \delta_i^{-1/2}.
(\sigma l_1)^{-1/2} \delta_i^{1/4} h_i^{5/4}.
\end{equation*}
In order to enforce (\ref{eqadmonUij}) and (\ref{eqadmonVij}) we choose
\begin{equation*}
\delta_i :=  \min \left\{ h_i , (\sigma l_1)^{2/3} \gamma^{-1/3} h_i^{1/3} \right\}, \quad
L_i :=
\max \left\{ h_i , (\sigma l_1)^{-1/2} \delta_i^{1/4} h_i^{5/4} \right\}, \quad i \in \{ 0, \ldots , N-1 \}.
%\max \left\{ h_i , (\sigma l_1)^{-1/3} \gamma^{-1/12} h_i^{4/3} \right\}, \quad i \in \{ 0, \ldots , N-1 \}.
\end{equation*}
We only have to verify the second condition in (\ref{eqadmonVij}). Obviously $\delta_{i+1}\le \delta_i$. At the same time,
$h_{i+1} =  h_i/2$ implies $\delta_{i+1} \ge \delta_i/2$.

To complete the proof we need to consider two cases: $\gamma \ge 1$, $\gamma < 1$.

\emph{Case $\gamma \ge 1$:} First we choose $h_N$ and $\delta_N$, the size of the folding pattern in the boundary layer, by equating terms in equation \eqref{B.16}. This gives
\begin{equation}
\label{hN:g>1}
\varepsilon := h_N := \sigma l_1 \gamma, \quad \delta_N :=    \sigma l_1
\end{equation}
and
\begin{equation}
\label{eq:UBBL}
I^{(\sigma,\gamma)}[\bu,w,\Omega_{\textrm{BL}}] \le C l_2 \varepsilon = C l_1 l_2 \sigma \gamma.
\end{equation}
Observe that $\delta_N \le \varepsilon$ since $\gamma \ge 1$ and so the bound \eqref{B.16} is valid
(in particular, $h\le l_2$ since $\sigma\gamma\le 1$).
Now we determine which of the two values is taken by $\delta_i$, $i \in \{ 0 , \ldots , N-1 \}$. Note that
\begin{equation*}
h_i  \ge (\sigma l_1)^{2/3} \gamma^{-1/3} h_i^{1/3} \quad \Longleftrightarrow \quad h_i \ge \sigma l_1   \gamma^{-1/2}.
\end{equation*}
But for all $i$
\begin{equation*}
h_i \ge h_N =\sigma  l_1  \gamma \ge  \sigma  l_1 \gamma^{-1/2}
 \end{equation*}
 since $\gamma \ge 1$. Therefore  for all $ i \in \{ 0 , \ldots , N-1 \}$
\begin{equation}
\label{eq:di}
\delta_i =  (\sigma l_1)^{2/3} \gamma^{-1/3} h_i^{1/3}
\end{equation}
and, inserting in the definition of $L_i$,
\begin{equation*}
L_i = \max \left\{ h_i , (\sigma l_1)^{-1/3} \gamma^{-1/12} h_i^{4/3} \right\}.
\end{equation*}
Note that
\begin{equation*}
h_i \le (\sigma l_1)^{-1/3} \gamma^{-1/12} h_i^{4/3} \quad \Longleftrightarrow \quad h_i \ge  \sigma l_1\gamma^{1/4}.
\end{equation*}
But for all $i$
\begin{equation*}
h_i \ge h_N =  \sigma  l_1\gamma \ge  \sigma l_1\gamma^{1/4}
 \end{equation*}
 since $\gamma \ge 1$. Therefore
 \begin{equation}
 \label{eq:Li}
L_i =  (\sigma l_1)^{-1/3} \gamma^{-1/12} h_i^{4/3} \quad \textrm{for all } i \in \{ 0 , \ldots , N-1 \}.
\end{equation}
It remains to choose $h_0$ (equivalently $N$). We do this by balancing the energy in the bulk region $\Omega_{\textrm{B}}$ with the energy in the
branching region $\Omega_{\textrm{BR}}$. The same computation as in  Lemma \ref{Energy:Bint} shows that the energy in $\Omega_{\textrm{B}}$ satisfies
\begin{equation}
\label{eq:UBB}
I^{(\sigma,\gamma)}[\bu,w,\Omega_{\textrm{B}}] \le C \frac{l_2}{2 h_0} \left(  (\sigma l_1)^2 \frac{l_1h_0}{\delta_0^2} + 2 \g l_1 \delta_0 \right)
= C l_1 l_2 (\sigma l_1)^{2/3} \gamma^{2/3} h_0^{-2/3}.
\end{equation}
Inserting  \eqref{eq:di} and \eqref{eq:Li} in (\ref{eq:ebound}) leads to
\begin{equation*}
I^{(\sigma,\gamma)}[\bu,w,U_i^j\cup V_i^j]\le
 C (\sigma l_1)^{1/3} \gamma^{7/12} h_i^{5/3}.
\end{equation*}
Therefore the energy in $\Omega_{\textrm{BR}}$ satisfies
\begin{align}
\nonumber
I^{(\sigma,\gamma)}[\bu,w,\Omega_{\textrm{BR}}] & \le C \sum_{i=0}^{N-1} \frac{l_2}{2h_i} (\sigma l_1)^{1/3} \gamma^{7/12} h_i^{5/3}
=  C \sum_{i=0}^{N-1} l_2 (\sigma l_1)^{1/3} \gamma^{7/12} h_i^{2/3} \\
\label{eq:UBBR}
& = C \sum_{i=0}^{N-1} l_2 (\sigma l_1)^{1/3} \gamma^{7/12} (2^{-i}h_0)^{2/3}
\le C l_2 (\sigma l_1)^{1/3} \gamma^{7/12} h_0^{2/3}.
\end{align}
Equating the energy bounds for $\Omega_{\textrm{B}}$ and $\Omega_{\textrm{BR}}$ gives
\begin{equation*}
l_1 l_2 (\sigma l_1)^{2/3} \gamma^{2/3} h_0^{-2/3} = l_2 (\sigma l_1)^{1/3} \gamma^{7/12} h_0^{2/3}
\quad \Longleftrightarrow \quad h_0 = l_1 \sigma^{1/4} \gamma^{1/16}.
\end{equation*}
(Here we have assumed that the equation $l_1 \sigma \gamma = h_N = 2^{-N} h_0 = 2^{-N} l_1 \sigma^{1/4} \gamma^{1/16}$ has an integer solution $N$.
In general this will not be the case. We obtain the same energy bound, however, by defining $h_0 = l_1 \sigma^{1/4} \gamma^{1/16}$,
$\tilde{N} = \lfloor |\log_2 (l_1 \sigma \gamma/h_0) |\rfloor$, $h_i = 2^{-i} h_0$, $i \in \{1,\ldots,\tilde{N} \}$.)
We remark that $h_0\ge h_N$ because $\gamma\le \sigma^{-4/5}$.
Substituting $h_0 = l_1 \sigma^{1/4} \gamma^{1/16}$ into \eqref{eq:UBB} and \eqref{eq:UBBR} yields
\begin{equation*}
I^{(\sigma,\gamma)}[\bu,w,\Omega_{\textrm{B}}\cup \Omega_{\textrm{BR}}]\le  C l_1 l_2 \sigma^{1/2} \gamma^{5/8}.
\end{equation*}
By combing this and \eqref{eq:UBBL} we find that
\begin{equation*}
I^{(\sigma,\gamma)}[\bu,w,\Omega]\le  C l_1 l_2 (\sigma^{1/2} \gamma^{5/8} + \sigma \gamma).
\end{equation*}
But since $\sigma \gamma \le 1$ and $\gamma \ge 1$
\begin{equation*}
\sigma \gamma = \sigma^{1/2} \gamma^{5/8} ( \sigma \gamma )^{1/2} \gamma^{-1/8} \le \sigma^{1/2} \gamma^{5/8}
\end{equation*}
and we obtain the desired result.

\emph{Case $\gamma < 1$:} In this case we choose $\varepsilon$, $h_N$ and $\delta_N$ as for the case $\gamma = 0$ (see \cite{BenBelgacem1}):
\begin{equation}
\label{hN:g<1}
\varepsilon = h_N = \delta_N = l_1 \sigma.
\end{equation}
(Compare \eqref{hN:g<1} to \eqref{hN:g>1}.) Substituting these into equation  \eqref{B.16}  gives
\begin{equation}
\label{eq:UBBL2}
I^{(\sigma,\gamma)}[\bu,w,\Omega_{\textrm{BL}}] \le C l_1 l_2 \sigma (1 + \gamma) \le  C l_1 l_2 \sigma
\end{equation}
since $\gamma < 1$.
Now we determine $\delta_i$ for $i \in \{ 0 , \ldots , N-1 \}$. In this case
\begin{equation*}
h_N = \sigma l_1  < \sigma l_1 \gamma^{-1/2}.
\end{equation*}
Let $I$ be the largest value of $i<N$ such that $h_i \ge \sigma l_1 \gamma^{-1/2}$
(if there is none, $I=-1$). We have
\begin{equation}
\label{eq:di2}
\delta_i = (\sigma l_1)^{2/3} \gamma^{-1/3} h_i^{1/3} \; \textrm{ for } i \in \{0, \ldots , I \}, \qquad \delta_i = h_i
\; \textrm{ for } i \in \{I+1, \ldots , N-1 \}.
\end{equation}
Therefore
\begin{equation*}
L_i =
\left\{
\begin{array}{ll}
  \max \left\{ h_i , (\sigma l_1)^{-1/3} \gamma^{-1/12} h_i^{4/3} \right\} & \textrm{if }  i \in \{0, \ldots , I \},
 \\
 \max \left\{ h_i , (\sigma l_1)^{-1/2} h_i^{3/2}  \right\} & \textrm{if }  i \in \{I+1, \ldots , N-1 \}.
\end{array}
\right.
\end{equation*}
Since $\gamma < 1$ and $h_i \ge h_N = \sigma l_1 $, it is easy to check that
\begin{equation}
\label{eq:Li2}
L_i =
\left\{
\begin{array}{ll}
  (\sigma l_1)^{-1/3} \gamma^{-1/12} h_i^{4/3}  & \textrm{if }  i \in \{0, \ldots , I \},
 \\
  (\sigma l_1)^{-1/2} h_i^{3/2} & \textrm{if }  i \in \{I+1, \ldots , N-1 \}.
\end{array}
\right.
\end{equation}
It remains to choose $h_0$. We do this as for the case $\gamma \ge 1$ by balancing the energy in the
bulk region $\Omega_{\textrm{B}}$ with the energy in the
branching region $\Omega_{\textrm{BR}}$.
From \eqref{eq:UBB} %and the definition of $\delta_0$ 
we see that
the energy in the bulk region satisfies
\begin{equation}\label{eq:UBBnn}
I^{(\sigma,\gamma)}[\bu,w,\Omega_{\textrm{B}}] \le
\begin{cases}
C l_1 l_2 (\sigma l_1)^{2/3} \gamma^{2/3} h_0^{-2/3}& \text{ if } h_0\ge \sigma l_1\gamma^{-1/2},\\
 C l_1 l_2 (\sigma l_1)^{2}h_0^{-2}& \text{ if } h_0< \sigma l_1\gamma^{-1/2}.
\end{cases}
\end{equation}
Inserting equations \eqref{eq:di2}, \eqref{eq:Li2} in
 (\ref{eq:ebound}),
\begin{equation*}
I^{(\sigma,\gamma)}[\bu,w,U_i^j\cup V_i^j] \le C
\left\{
\begin{array}{ll}
 (\sigma l_1)^{1/3} \gamma^{7/12} h_i^{5/3} & \textrm{if }  i \in \{0, \ldots , I \},
 \\
 (\sigma l_1)^{-1/2} \gamma h_i^{5/2} + (\sigma l_1)^{3/2} h_i^{1/2} & \textrm{if }  i \in \{I+1, \ldots , N-1 \}.
\end{array}
\right.
\end{equation*}
Therefore the energy in $\Omega_{\textrm{BR}}$ satisfies
\begin{align}
\nonumber
I^{(\sigma,\gamma)}[\bu,w,\Omega_{\textrm{BR}}] & \le C \sum_{i=0}^{I} \frac{l_2}{2h_i} (\sigma l_1)^{1/3} \gamma^{7/12} h_i^{5/3}
+ C \sum_{i=I+1}^{N-1} \frac{l_2}{2h_i} \left(  (\sigma l_1)^{-1/2} \gamma h_i^{5/2} + (\sigma l_1)^{3/2} h_i^{1/2} \right)
\\
\nonumber
& \le C l_2 \left(
(\sigma l_1)^{1/3} \gamma^{7/12} h_0^{2/3} + (\sigma l_1)^{-1/2} \gamma h_{I+1}^{3/2} + (\sigma l_1)^{3/2} h_{N-1}^{-1/2}
\right)
\\
\nonumber
& \le C l_2
\left(
(\sigma l_1)^{1/3} \gamma^{7/12} h_0^{2/3} +\sigma  l_1 \gamma^{1/4} + \sigma l_1 \right)
\end{align}
since $h_{I-1} < \sigma l_1  \gamma^{-1/2}$ and $h_{N-1} =2 h_N = 2\sigma l_1 $. Since $\gamma < 1$,
\begin{equation}
\label{almostthere}
I^{(\sigma,\gamma)}[\bu,w,\Omega_{\textrm{BR}}] \le C  l_2 \left( (\sigma l_1)^{1/3} \gamma^{7/12} h_0^{2/3} + \sigma l_1 \right ).
\end{equation}
It remains to choose $h_0$. If $\gamma< \sigma$, we expect the second term on the right-hand side of \eqref{almostthere} will dominate the energy, therefore
it suffices to choose $h_0$ so that the others are not bigger. Focusing on the second row of  \eqref{eq:UBBnn} one easily sees that
$h_0=\sigma^{1/2}l_1$ is the appropriate choice (this is the same choice used in \cite{BenBelgacem1,BenBelgacem2}). In particular, since $\gamma<\sigma$ this obeys
 $h_0< \sigma l_1\gamma^{-1/2}$, hence is consistent.
Adding together \eqref{eq:UBBnn}, \eqref{eq:UBBL2} and \eqref{almostthere} yields in this case
\begin{equation*}
I^{(\sigma,\gamma)}[\bu,w,\Omega] \le C l_1 l_2 (\sigma  + \sigma^{2/3} \gamma^{7/12})\le  2C l_1 l_2 \sigma.
\end{equation*}
 If instead $\gamma\ge\sigma$, we  choose $h_0$ by equating the $h_0$ term on the right-hand side of \eqref{almostthere} with the first row in the right-hand side of \eqref{eq:UBBnn}. This gives
\begin{equation*}
h_0 = l_1 \sigma^{1/4} \gamma^{1/16}
\end{equation*}
as in the case $\gamma\ge1$. This is consistent since $h_0\ge \sigma l_1\gamma^{-1/2}$ is the same as $\gamma\ge \sigma^{4/3}$, which is true in this case.
Then adding together \eqref{eq:UBBnn}, \eqref{eq:UBBL2} and \eqref{almostthere} yields
\begin{equation*}
I^{(\sigma,\gamma)}[\bu,w,\Omega] \le C l_1 l_2 (\sigma  + \sigma^{1/2} \gamma^{5/8})
\end{equation*}
as required.
\end{proof}
\appendix
\section{Derivation of the Model and Rescaling}
\label{M}
In this appendix we derive the energy \eqref{I.1}.
We model the two-layer material (an elastic film on a substrate) described in the introduction with an energy consisting of two parts, an elastic energy for the thin film and a bonding energy for the interaction between the film and the substrate. We take the elastic energy to be the von K\'arm\'an energy, which penalises extension (stretching and compression) and bending:
  %$I_{\textrm{vK}}$, which consists of a stretching (or membrane) term $I_{\textrm{S}}$, which penalises stretching and compression of the film, and a bending %term $I_{\textrm{Be}}$, which penalises bending:
\beqn{M.1}
\begin{aligned}
%I_{\mathrm{vK}} & = I_{\mathrm{S}} + I_{\mathrm{Be}} \\
\hat I_{\mathrm{vK}} & := \frac{Eh_\mathrm{f}}{2(1-\nu^2)} \int_{\Omega} \left\{ (1-\nu) \left| \frac{D \bu + (D \bu)^{\textrm{T}}}{2} + \frac{D w \otimes D w}{2} - \varepsilon_* \Id \right|^2
\right.
\\ & \left. \phantom{\frac{Eh_\mathrm{f}}{2(1-\nu^2)} \int_{\Omega}} \quad + \nu \left| \mathrm{tr} \left(
\frac{D \bu + (D \bu)^{\textrm{T}}}{2} + \frac{D w \otimes D w}{2} - \varepsilon_* \Id \right) \right|^2 \right\} \, d\bx%dx \, dy
\\
& \phantom{==} + \frac{E h_\mathrm{f}^3}{24(1-\nu^2)} \int_{\Omega} \left\{ (1-\nu) | D^2 w |^2 + \nu (\Delta w)^2 \right\} \, d\bx%dx \, dy,
\end{aligned}
\eeq
where $\Omega = (0,l_1) \times (0,l_2)$ is the set of material points $(x,y)$ of the film, $h_\textrm{f}$ is the thickness of the film, $E$ is the Young's Modulus, and $\nu$ is the Poisson ratio. The functions $\bu(x,y) = (u(x,y),v(x,y))$ and  $w(x,y)$ are the in-plane and transversal (vertical) displacements of the film from the isotropically compressed state
$((1-\varepsilon_*)x,(1-\varepsilon_*)y,0)$, where $0<\varepsilon_*<1$ is the compression ratio. Note that $\e_*=0$ corresponds to no compression and $\e_*=1$ corresponds to total compression. Thus $(u,v,w)=(0,0,0)$ corresponds to the isotropically compressed state of the film and
$(u,v,w)=(\varepsilon_* x,\varepsilon_* y,0)$ corresponds to the relaxed, natural state. The substrate is taken to be at height $z=0$. Therefore the transversal displacement $w$ must satisfy the constraint $w \ge 0$ (since the film cannot go below the substrate). If $w(x,y)=0$ then the material point $(x,y)$ of the film is bonded down to the substrate.

For the energy scaling analysis that we perform we can set the Poisson ratio $\nu$ equal to zero without loss of generality since the terms of \eqref{M.1} with factor $\nu$ can be bound from above and below by those without, as shown in \cite[Appendix B]{BenBelgacem1}.

The interfacial force between the thin film and the substrate is an attractive van der Waals force.
We model it in a simple way with a bonding energy $\hat I_{\textrm{Bo}}$ that penalises debonding from the substrate:
\beqn{M.2}
\hat I_{\textrm{Bo}} := \gamma^* | \{ (x,y)\in \Omega \, : \, w(x,y)>0 \} |  %\int_{\Omega} \chi_{\{w>0\}} \, d\bx%dx \, dy
\eeq
where %$\chi_{\{w>0\}}$ is the characteristic function of the set ${\{w>0\}}$ and
$\gamma^*$ is a constant depending on the material properties of the film and the substrate. In more sophisticated models $I_{\textrm{Bo}}$ would also depend
on the size of $w$, i.e., how far the film is from the substrate.

Thus the total energy we assign to the system is
\beqn{M.3}
\hat I := \hat I_{\textrm{vK}} + \hat I_{\textrm{Bo}}.% = I_{\textrm{S}} + I_{\textrm{Be}} + I_{\textrm{Bo}}.
\eeq
On the boundary $\{x=0\}$ we assume that the film is fixed to the substrate and satisfies the clamped boundary conditions
\beqn{M.4}
u(0,y)=0, \quad v(0,y)=0, \quad w(0,y) = 0, \quad D w (0,y) = 0.
\eeq
(Note that in the experiments of \cite{MeiThurmer2007} the film is actually fixed to a buffer layer and satisfies slightly different boundary conditions. This is discussed in Appendix \ref{H}.) The film is free on the rest of its boundary, $\{x=l_1\} \cup \{y=0\} \cup \{y=l_2\}$.
%[Free boundary conditions to be included.]
\paragraph{Rescaling.} In order to reduce the number of parameters we define rescaled variables, denoted with a superscript $*$, by
\begin{gather}
%\label{M.5}
% x =: l_2 x^*, \qquad y =: l_2 y^*, \\
\label{M.5}
\bu =: 2 \varepsilon_*  \bu^*, \qquad w =: (2 \varepsilon_*)^{1/2} \, w^*.
\end{gather}
%Let $\Om^* = \{ (x,y) \in (0,1) \times (0,H/L) \}$.
By substituting from equation \eqref{M.5} into equation \eqref{M.3}, multiplying by
$2 (1-\nu^2) / (E h_\mathrm{f} \varepsilon_*^2)$, setting $\nu=0$, and dropping all the superscripts * for convenience, we obtain a new energy $I^{(\sigma,\g)}$:
%we obtain a new energy $I^*$.
%\beqn{M5.5}
%I^* = \frac{2 (1-\nu^2)}{E h \varepsilon_*^2 l_2^2} I.
%\eeq
%For convenience we drop all the superscripts * to obtain
\beqn{M.6}
\begin{aligned}
I^{(\sigma,\g)}[\bu,w,l_1,l_2]
& = \int_{x=0}^{l_1} \! \int_{y=0}^{l_2} |D \bu + (D \bu)^\mathrm{T} + D w \otimes D w - \Id |^2 \, dx \, dy \\
& \phantom{=} + (\sigma l_1)^2 \int_{x=0}^{l_1} \! \int_{y=0}^{l_2} | D^2 w|^2 \, dx \, dy  + \g | \{ (x,y)\in \Omega \, : \, w(x,y)>0 \} | \\
%& = \int_{x=0}^{l_1} \! \int_{y=0}^{l_2} \{ (w_x^2 + 2u_x -1)^2 + (w_y^2 + 2v_y -1)^2 + 2(w_x w_y + v_x +u_y)^2 \} \, dx \, dy \\
%& \phantom{=} + (\sigma l_1)^2
%\int_{x=0}^{l_1} \! \int_{y=0}^{l_2} \{ w_{xx}^2 + 2 w^2_{xy} + w^2_{yy} \} \, dx \, dy +  \g | \{ (x,y)\in \Omega \, : \, w(x,y)>0 \} |
%& =: I_{\textrm{S}} + I_{\textrm{Be}} + I_{\textrm{Bo}} =: I_{\textrm{vk}} + I_{\textrm{Bo}}
%E^* & := E_{\mathrm{vK}}^* + E_{\mathrm{I}}^* \\
% & := \int_{\Om^*} \left\{ (1-\nu) \left| \nabla \bu^* + (\nabla \bu^*)^{\textrm{T}} + \nabla w^* \otimes \nabla w^* - I \right|^2
%\right.
%\\ & \left. \phantom{\int_{\Om^*}} \quad \quad + \nu \left| \mathrm{tr} \left(
%\nabla \bu^* + (\nabla \bu^*)^{\textrm{T}} + \nabla w^* \otimes \nabla w^* - I \right) \right|^2 \right\} \, dx^* \, dy^*
%\\
%& \phantom{==} + \sigma^2 \int_{\Om^*} \left\{ (1-\nu) | \nabla^2 w^* |^2 + \nu (\Delta w^*)^2 \right\} \, dx^* \, dy^*
%\\
%& \phantom{==} + \gamma^* \int_{\Om^*} \chi_{\{w^*>0\}} \, dx^* \, dy^*,
\end{aligned}
\eeq
where $\sigma$ is the rescaled film thickness and $\g$ is the rescaled bonding energy per unit area:
\beqn{M.7}
\sigma := \frac{h_\mathrm{f}}{l_1 (6 \varepsilon_*)^{1/2}}, \qquad \g :=\frac{2 (1-\nu^2) \g^*}{E h_\mathrm{f} \varepsilon_*^2} = \frac{2 \g^*}{E h_\mathrm{f} \varepsilon_*^2} \quad (\textrm{since } \nu=0).
\eeq
Throughout this paper we refer to $\g$ as the bonding strength, although note that it also depends on the unscaled film thickness $h_{\textrm{f}}$. Equation \eqref{M.6} is exactly equation \eqref{I.1} given in the Introduction.

\section{Upper Bounds for Buffer Layers with Nonzero Thickness}% $h_{\textrm{b}} >0$}
\label{H}
%Up until now we have been making the (unphysical) assumption that the buffer layer thickness $h_{\textrm{b}}$ is zero. In this section we show that Theorem \ref{UB2} is also valid for thin buffer layers and quantify what we mean by thin:
In the experiments of \cite{MeiThurmer2007} the film is not clamped to the substrate, but rather to a thin buffer layer, and satisfies clamped boundary conditions of the form
\begin{equation}
u(0,y)=0, \quad v(0,y)=0, \quad w(0,y) = h_\mathrm{b}, \quad D w (0,y) = \mathbf{0}
\end{equation}
where $h_\mathrm{b}>0$ is the thickness of the buffer layer.
In this paper we considered the unphysical case $h_\mathrm{b}=0$ in order to simplify the analysis.
In this appendix we show that the upper bounds of Theorem \ref{UB1} also hold when $h_\mathrm{b}$ is sufficiently small.
Recall that $\Omega = (0,l_1)\times(0,l_2)$.
Define
\begin{equation}
\nonumber
V_{h_{\textrm{b}}} = \left\{
(\bu,w) \in H^1(\Omega;\R^2) \times H^2(\Omega) : w= h_{\textrm{b}}, \, \bu = D w = \mathbf{0}, \,  \textrm{ on } \{ 0 \} \times (0,l_2), \; w \ge 0
\right\}.
\end{equation}
%Note that $V$ defined in equation \eqref{V} satisfies $V=V_0$.
\begin{Theorem}[Upper Bounds for $h_{\textrm{b}} >0$] \label{UB4}
Let $0<l_1\le l_2$, $\sigma\in(0,1)$, $\g\ge 0$.
%If $(\sigma,\g)\in G$, take any $h_{\textrm{b}} \ge 0$.
Assume that $h_{\textrm{b}}$ satisfies
\beq
\nonumber
0 \le \frac{h_{\textrm{b}}}{l_1} \le \min \{ \sigma,\sigma^{-1} \g^{-3/2} \}.
\eeq
Then there exists admissible displacement fields $(\bu,w) \in V_{h_{\textrm{b}}}$ satisfying the same upper bounds as in Theorem \ref{UB1}.
\end{Theorem}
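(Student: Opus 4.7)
The plan is to prepend to each Theorem \ref{UB1} construction a thin ``lifting'' boundary layer near $x=0$ in which the film is gently raised from $w=0$ to $w=h_{\textrm{b}}$. Introduce a parameter $\ell_b>0$ (to be chosen) and split $\Omega=\Omega_1\cup\Omega_2$ with $\Omega_1=(0,\ell_b)\times(0,l_2)$ and $\Omega_2=(\ell_b,l_1)\times(0,l_2)$. On $\Omega_2$, apply the Theorem \ref{UB1} construction translated so that its clamped edge sits at $x=\ell_b$; the original proofs go through verbatim on a rectangle of width $l_1-\ell_b\ge l_1/2$, and the fact that the bending coefficient $(\sigma l_1)^2$ differs from $(\sigma(l_1-\ell_b))^2$ by at most a factor $4$ is absorbed by the universal constant. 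On $\Omega_1$, take the elementary construction $w(x,y):=h_{\textrm{b}}\,\phi(x/\ell_b)$ and $\bu\equiv\mathbf{0}$, where $\phi\in C^\infty([0,1];[0,1])$ satisfies $\phi(0)=1$ and $\phi'(0)=\phi(1)=\phi'(1)=0$. The boundary conditions of $V_{h_{\textrm{b}}}$ at $x=0$, the matching conditions $\bu=w=Dw=\mathbf{0}$ at $x=\ell_b$, and the positivity constraint $w\ge 0$ are then immediate.

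Because $\bu\equiv\mathbf{0}$ and $w_y\equiv 0$ in $\Omega_1$, the stretching integrand reduces to $((h_{\textrm{b}}/\ell_b)^2(\phi')^2-1)^2+1\le C(1+(h_{\textrm{b}}/\ell_b)^4)$, and the hypothesis $h_{\textrm{b}}\le\sigma l_1$ allows one to absorb $h_{\textrm{b}}^4/\ell_b^3$ into $(\sigma l_1)^2h_{\textrm{b}}^2/\ell_b^3$. Together with direct estimates of the bending and bonding terms, this gives
\begin{equation*}
 I^{(\sigma,\gamma)}[\bu,w,\Omega_1]\le C(1+\gamma)\ell_b l_2 + C(\sigma l_1)^2 h_{\textrm{b}}^2 l_2/\ell_b^3,
\end{equation*}
which is minimised at $\ell_b\sim(\sigma l_1 h_{\textrm{b}})^{1/2}(1+\gamma)^{-1/4}$, yielding a boundary-layer energy of order $(1+\gamma)^{3/4}(\sigma l_1 h_{\textrm{b}})^{1/2}l_2$. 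It then suffices to verify, regime by regime, that this is bounded by the Theorem \ref{UB1} estimate; the required inequality $(h_{\textrm{b}}/l_1)^{1/2}(1+\gamma)^{3/4}\le C\sigma^{a-1/2}\gamma^b$ reduces in regime $A$ to $h_{\textrm{b}}/l_1\le C\sigma^{-1}\gamma^{-3/2}$ (the second branch of the hypothesis), and in regimes $B$, $C$, $D$ to $h_{\textrm{b}}/l_1\le\sigma$ (the first branch), with the regime-defining upper bounds on $\gamma$ absorbing the $(1+\gamma)^{3/4}$ factor. Adding the two energy contributions proves the theorem.

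The main obstacle is precisely the bipartite hypothesis $h_{\textrm{b}}/l_1\le\min\{\sigma,\sigma^{-1}\gamma^{-3/2}\}$: the $\sigma^{-1}\gamma^{-3/2}$ threshold comes from the bonding-versus-bending balance and is binding only in regime $A$ (where $\gamma$ is large enough for bonding to dominate the boundary-layer cost), while the $\sigma$ threshold reflects an unavoidable flat stretching cost $\sim\ell_b l_2$ in $\Omega_1$. Indeed, with $w$ depending only on $x$ and $v(0,y)=0$ enforced, the term $(2v_y+w_y^2-1)^2$ is identically $1$, so the stretching cannot be made smaller than $\sim\ell_b l_2$ by any choice of $\bu$ within this simple ansatz; a more elaborate $y$-dependent construction in the spirit of the UB1 boundary layers of Sections \ref{B} and \ref{E} could in principle be used, but is unnecessary since the simple ansatz already yields the claimed bound under the stated hypothesis on $h_{\textrm{b}}$.
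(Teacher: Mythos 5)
Your proposal follows the same plan as the paper's Appendix~\ref{H}: insert a lifting layer of width $\ell_b$ near $x=0$ with $w=h_{\textrm{b}}\phi(x/\ell_b)$, estimate the layer energy by $C(1+\gamma)\ell_b l_2 + C(\sigma l_1)^2h_{\textrm{b}}^2l_2/\ell_b^3$ (absorbing $h_{\textrm{b}}^4/\ell_b^3$ into the bending term via $h_{\textrm{b}}\le\sigma l_1$), optimize at $\ell_b\sim(\sigma l_1h_{\textrm{b}})^{1/2}(1+\gamma)^{-1/4}$ to get $\sim(1+\gamma)^{3/4}(\sigma l_1h_{\textrm{b}})^{1/2}l_2$, and check regime by regime. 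Your $(1+\gamma)$ versus the paper's $\max\{1,\gamma\}$ and your $\bu\equiv\mathbf0$ versus the paper's $u=x/2$, $v=0$ are immaterial; in both cases the unavoidable $O(1)$ stretching density comes from the $(2v_y+w_y^2-1)^2$ term, as you correctly identify.

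There is, however, one small gap in how you handle $\Omega_2$. You re-apply Theorem~\ref{UB1} on a rectangle of width $l_1-\ell_b$ and compensate for the mismatch between the actual bending coefficient $(\sigma l_1)^2$ and the natural one $(\sigma(l_1-\ell_b))^2$ by ``at most a factor $4$,'' which requires $\ell_b\le l_1/2$. But your optimal $\ell_b$ only satisfies $\ell_b\le\sigma l_1(1+\gamma)^{-1/4}\le\sigma l_1$; when $\sigma$ is close to $1$ and $\gamma$ is small this can exceed $l_1/2$, and then the compensating factor $(l_1/(l_1-\ell_b))^2$ is unbounded. The paper sidesteps this by not re-proving anything on the shorter rectangle: it translates the Theorem~\ref{UB1} test functions by $\eta$ and restricts to $(\eta,l_1)\times(0,l_2)$, so the energy on $\Omega_2$ is, by non-negativity of the integrand, at most the energy of the untranslated construction on all of $(0,l_1)\times(0,l_2)$ --- which is exactly what Theorem~\ref{UB1} bounds, with no rescaling of $\sigma$ or $l_1$ and valid for any $\ell_b<l_1$ (which $h_{\textrm{b}}\le\sigma l_1$ guarantees). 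Your variant can be patched --- for instance cap $\ell_b$ at $l_1/2$, and observe that the capped case forces $\sigma>1/2$ and $1+\gamma<(2\sigma)^4\le16$, where all four Theorem~\ref{UB1} bounds are bounded below by a universal constant and the flat construction plus lifting layer suffices --- but as written the step needs this extra argument.
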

%Note that for regimes $B$, $E$, $F$ the upper bound on $h_{\textrm{b}}$ reduces to $\frac{h_{\textrm{b}}}{l_1} \le
%\min \{ \sigma,\sigma^{-1} \g^{-3/2} \} = \sigma$.
\begin{proof}
%[Proof of Theorem \ref{UB4}]
%For regime $G$ we simply use the same construction as in the proof of Theorem \ref{UB2}, except that the vertical displacement $w$ is replaced with $\tilde{w} %= w + h_{\textrm{b}}$. This does not increase the bonding energy of the construction since $w$ already satisfies $w>0$ almost everywhere. See Section %\ref{G}.

%For regimes $A$, $B$, $E$, and $F$
We construct displacement fields satisfying the required upper bounds by simply appending  to the constructions used to prove Theorem \ref{UB1} a boundary layer in which the film is bent upwards from height $0$ to height $h_{\textrm{b}}$. Let $\eta>0$ be the boundary layer thickness. For $x \in [0,\eta]$, $y \in [0,l_2]$ define
\beqn{H.1}
u = \frac12 x, \quad v = 0, \quad w = h_{\textrm{b}} \left[ 1 - \psi \left( \frac{x}{\eta} \right) \right]
\eeq
where $\psi\in C^2(\R)$ is a function with $\psi(t)=0$ for $t\le 0$, $\psi(t)=1$ for $t\ge 1$. For $x \ge \eta$, define $u$, $v$, $w$ as in the proof of Theorem \ref{UB1} (except that $u$ is replaced with $u + \eta/2$). The energy $E_{\textrm{BL}}^\eta$ of $(u,v,w)$ in the boundary layer $(0,\eta)\times(0,l_2)$ satisfies
\beqn{H.2}
E_{\textrm{BL}}^\eta  \le l_2 \eta \left( \frac{h_{\textrm{b}}^4}{\eta^4} + (\sigma l_1)^2 \frac{h_{\textrm{b}}^2}{\eta^4} + 1 + \gamma \right)
 \le l_2 \eta \left(2(\sigma l_1)^2 \frac{h_{\textrm{b}}^2}{\eta^4} + 1 + \gamma \right)
\eeq
since we assumed that $h_{\textrm{b}} \le \sigma l_1$. We choose
\beqn{H.3}
\eta = (\sigma l_1 h_{\textrm{b}})^{1/2}\max\{1,\gamma\}^{-1/4}
\eeq
so that
\beqn{H.4}
E_{\textrm{BL}}^\eta \le l_2 (\sigma l_1 h_{\textrm{b}})^{1/2}\max\{1,\gamma\}^{3/4}.
\eeq
For each regime it is easy to check that this boundary layer energy is no greater than the energy of the constructions given in Theorem \ref{UB1}.
%{\color{red}[Check!]}
\end{proof}
%\begin{remark}[The bound on $h_{\textrm{b}}$ is not sharp]
This is the simplest possible modification of the proof of Theorem \ref{UB1} and we do not claim that the bound
$h_{\textrm{b}}/l_1 < \min \{\sigma,\sigma^{-1} \g^{-3/2}\}$ is sharp.
%\end{remark}

%%%%%%%%%%%%%%%%%%%%%%%%%%%%%%%%%%%%%%%%%%%%%%%%%%%%%%%%%%%%%%%%%%%%%%%%%%%%%%%%%%%%%%%%%%%%%%%%%%%%%%%%%%%%%%%%%%%%%%

\section{Poincar\'e Constant}
\label{App:P}
In this section we prove the Poincar\'e inequality \eqref{eq:Poin}. It is sufficient to prove the following:
\begin{Lemma}
Let $Q=(0,1) \times (0,1)$ be the unit square and $\bof \in C^2(Q;\mathbb{R}^2)$. Let $\bof$ be zero on at least half of the square:
\begin{equation*}
| \{ \bx \in Q : \bof(\bx) = \mathbf{0} \}| \ge \frac12.
\end{equation*}
Then
\begin{equation*}
\int_Q |\bof|^2 \, d \bx \le \int_Q |D \bof|^2 \, d \bx.
\end{equation*}
\end{Lemma}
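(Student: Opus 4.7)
The inequality is vector-valued but decouples: since $|\mathbf{f}|^2 = f_1^2 + f_2^2$ and $|D\mathbf{f}|^2 = |\nabla f_1|^2 + |\nabla f_2|^2$, it suffices to prove the scalar statement that any $f\in C^2(Q)$ with $|\{f=0\}|\ge 1/2$ satisfies $\int_Q f^2 \le \int_Q |\nabla f|^2$. Let $E := \{\mathbf{f}=\mathbf{0}\}$, so that each scalar component $f$ vanishes on $E$ with $|E|\ge 1/2$.

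\textbf{Step 1: standard Neumann--Poincar\'e on the unit square.} Set $\bar f := \int_Q f\,d\bx$. The first nonzero Neumann eigenvalue of $-\Delta$ on $Q=(0,1)^2$ is $\pi^2$ (attained by $\cos(\pi x)$), so
\begin{equation*}
\int_Q (f-\bar f)^2\,d\bx \; \le \; \frac{1}{\pi^2}\int_Q|\nabla f|^2\,d\bx.
\end{equation*}

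\textbf{Step 2: control the mean via the zero set.} Because $f=0$ on $E$, we have $f-\bar f=-\bar f$ on $E$, hence
\begin{equation*}
|E|\,\bar f^{\,2} \;=\; \int_E (f-\bar f)^2\,d\bx \;\le\; \int_Q (f-\bar f)^2\,d\bx \;\le\; \frac{1}{\pi^2}\int_Q|\nabla f|^2\,d\bx.
\end{equation*}
Using $|E|\ge 1/2$ yields $\bar f^{\,2}\le \frac{2}{\pi^2}\int_Q|\nabla f|^2$.

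\textbf{Step 3: assemble.} By orthogonality of $\bar f$ and $f-\bar f$ in $L^2(Q)$ (with $|Q|=1$),
\begin{equation*}
\int_Q f^2\,d\bx \;=\; \int_Q (f-\bar f)^2\,d\bx \;+\; \bar f^{\,2} \;\le\; \frac{1}{\pi^2}\int_Q|\nabla f|^2\,d\bx + \frac{2}{\pi^2}\int_Q|\nabla f|^2\,d\bx \;=\; \frac{3}{\pi^2}\int_Q|\nabla f|^2\,d\bx.
\end{equation*}
Since $3/\pi^2<1$, this gives $\int_Q f^2\le \int_Q|\nabla f|^2$. Summing over the two components of $\mathbf{f}$ produces the desired inequality. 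There is no real obstacle here; the only point requiring care is to keep the final constant below $1$, which is why the bound must go through the sharp Neumann constant $1/\pi^2$ rather than a generic Poincar\'e estimate.
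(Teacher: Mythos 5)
Your argument is correct and follows the same overall strategy as the paper: decompose $\int_Q |\bof|^2 = \int_Q|\bof-\overline{\bof}|^2 + |\overline{\bof}|^2$, bound the first term by a mean-value Poincar\'e inequality and the second by using that $\bof$ vanishes on a set of measure $\ge 1/2$. Where you differ is in how the Poincar\'e constant is obtained. The paper proves the one-dimensional estimate $\int_0^1|g-\bar g|^2\le \pi^{-2}\int_0^1|g'|^2$ from scratch via a Fourier cosine expansion and then applies it in each variable separately, writing $\bof-\overline{\boa}=(\bof-\boa)+(\boa-\overline{\boa})$ with $\boa(x)=\int_0^1\bof(x,y)\,dy$ and using $|a+b|^2\le 2|a|^2+2|b|^2$, which yields the factor $2/\pi^2$ and the overall constant $6/\pi^2<1$. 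You instead invoke directly the sharp two-dimensional Neumann spectral gap $\lambda_1=\pi^2$ on the unit square, obtaining the clean constant $1/\pi^2$ and the overall bound $3/\pi^2$. This is sharper and shorter; the paper's route is more self-contained, deriving the constant rather than quoting the Neumann spectrum (and in fact the paper's extra factor of two could be avoided by noting that $\bof-\boa$ and $\boa-\overline{\boa}$ are $L^2(Q)$-orthogonal). Both arguments correctly produce a constant strictly below $1$, which is all that is needed for the statement.
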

\begin{proof}
First we recall the Poincar\'e inequality in one dimension. Let $\bg : [0,1] \to \mathbb{R}^2$. Write $\bg$ as the Fourier cosine series
(the Fourier series of the even extension of $g$ to $[-1,1]$)
\begin{equation*}
\bg(x) = \frac{\hat{\bg}_0}{2} + \sum_{k=1}^\infty \hat{\bg}_k \cos(\pi k x), \qquad \hat{\bg}_k = 2 \int_0^1 \bg(x) \cos(\pi k x) \, dx.
\end{equation*}
Let $\overline{\bg} = \hat{\bg}_0/2 = \int_0^1 \bg(x) \, dx$.
By Parseval's Theorem
\begin{equation}
\label{P1D}
\int_0^1 |\bg(x) - \overline{\bg} |^2 \, dx = \frac12 \sum_{k=1}^\infty |\hat{\bg}_k|^2 = \frac{1}{2 \pi^2} \sum_{k=1}^\infty \frac{1}{k^2} |\pi k \hat{\bg}_k|^2
\le \frac{1}{2 \pi^2} \sum_{k=1}^\infty |\pi k \hat{\bg}_k|^2 = \frac{1}{\pi^2} \int_0^1 |\bg'(x)|^2 \, dx,
\end{equation}
which is the Poincar\'e inequality in one dimension. Let
\begin{equation*}
\boa(x) = \int_0^1 \bof(x,y) \, dy, \qquad \overline{\boa} = \int_0^1 \boa(x) \, dx = \int_Q \bof(x,y) \, dxdy.
\end{equation*}
By equation  \eqref{P1D}
\begin{gather*}
\int_0^1 |\boa(x) - \overline{\boa}|^2 \, dx \le \frac{1}{\pi^2} \int_0^1 |\boa'(x)|^2 \, dx \le \frac{1}{\pi^2} \int_0^1 \int_0^1 |\bof_x |^2 \, dxdy, \\
\int_0^1 |\bof(x,y) - \boa(x)|^2 \, dy \le \frac{1}{\pi^2} \int_0^1 |\bof_y|^2 \, dy.
\end{gather*}
Therefore
\begin{equation}
\label{est1}
\int_Q | \bof - \overline{\boa} |^2 \, dxdy \le 2
\left( \int_Q | \bof - \boa |^2 \, dxdy + \int_Q | \boa - \overline{\boa} |^2 \, dxdy \right)
\le \frac{2}{\pi^2} \int_Q |D \bof|^2 \, dxdy.
\end{equation}
Let $U = \{ \bx \in Q : \bof(\bx) = \mathbf{0} \}$. We have
\begin{equation}
\label{est2}
\frac12 |\overline{\boa}|^2 \le |U| |\overline{\boa}|^2 = \int_U |\bof - \overline{\boa}|^2 \, dx \le \frac{2}{\pi^2} \int_Q |D \bof|^2 \, dxdy.
\end{equation}
Observe that
\begin{equation*}
\int_Q |\bof - \overline{\boa}|^2 \, d \bx = \int_Q |\bof|^2 \, d\bx - |\overline{\boa}|^2.
\end{equation*}
By combining this with estimates \eqref{est1} and \eqref{est2} we complete the proof:
\begin{equation*}
\int_Q |\bof|^2 \, d \bx = \int_Q |\bof - \overline{\boa}|^2 \, d \bx + |\overline{\boa}|^2
\le \frac{6}{\pi^2} \int_Q |D \bof|^2 \, d \bx.
\end{equation*}
\end{proof}

%
%----------------------------------------------------------------------------------------------------------------------
%
\paragraph{Acknowledgements.}
The authors would like to thank O.~G.~Schmidt and other members of the Institute for Integrative Nanosciences, IFW Dresden (including P.~Cendula, S.~Kiravittaya and Y.~F.~Mei) for interesting discussions about the experiments that were the motivation for this paper.
This work was partially supported by the Deutsche Forschungsgemeinschaft
through the  Sonderforschungsbereich 1060 {\em ``The mathematics of emergent effects''}.

%
%------------------------------------------------------------------------------------------------------------------------
%
\bibliographystyle{amsplain}
\bibliography{delamin}

\end{document}